\documentclass[]{elsarticle}
\usepackage{amsmath}
\usepackage{mathtools}

\usepackage{xcolor}

\usepackage{graphicx}
\usepackage{subcaption}
\usepackage{siunitx}

\usepackage{algorithm}
\usepackage{algpseudocode}

\usepackage[utf8]{inputenc}
\usepackage[T1]{fontenc}
\usepackage{lmodern} 
\usepackage{microtype}

\usepackage{keytheorems}
\newkeytheorem{theorem, lemma, proposition, corollary}[parent = section]
\newkeytheorem{example, definition, assumption, remark}[style = definition, sibling = theorem]

\usepackage{zref-clever}
\zcLanguageSetup{english}{cap}
\zcRefTypeSetup{assumption}{
    name-sg = {assumption},
    Name-sg = {Assumption},
    name-pl = {assumptions},
    Name-pl = {Assumptions}
}

\usepackage[normalem]{ulem}
\usepackage{cancel}

\usepackage[notext]{stix2} 

\numberwithin{equation}{section}

\newcommand{\R}{\mathbb{R}}
\newcommand{\C}{\mathbb{C}}

\AtBeginDocument{

}

\title{Momentum Space Algorithm for Electronic Structure of Double-Incommensurate Trilayer Graphene}
\author[1]{Kenneth Beard}
\author[2]{Daniel Massatt}
\date{August 2025}

\begin{document}

\begin{abstract}

    Numerical algorithms for computing electronic structure of incommensurate 2D materials using ab initio models is critical for predicting material properties and guiding experiment. For bilayers, momentum space and continuum models have been introduced to approximate observables of ab initio tight-binding models using a momenta description despite the lack of periodicity in the tight-binding model required for Bloch theory. A similar structure has been introduced for double-incommensurate trilayers using a continuum model, where the three lattices are all mutually incommensurate. However, this description leads to a four-dimensional lattice space, and numerical convergence of the density of states was observed to have poor convergence.
    
    In this work, we introduce a momentum space framework for double incommensurate trilayer graphene, and introduce an efficient truncation scheme of the four-dimensional lattice to drastically improve convergence of the density of states and momentum local density of states (a parallel object to classical band structure). We implement this algorithm on an ab initio model of twisted trilayer graphene and validate convergence estimates. We further verify numerically that the momentum space algorithm, inherently higher order than the continuum model as it is an exact transformation of the tight-binding model, captures altered band behavior near the flat bands at magic angles.
\end{abstract}

\maketitle

\section{Introduction}

Incommensurate 2D materials have become a hotbed of research after the discovery of an unconventional superconducting phase in twisted bilayer graphene (TBG) in 2018 \cite{Cao2018} along with a correlated insulating phase, followed shortly by the discovery of the Fractional Quantum Hall Effect in TBG \cite{Xie2021}. Similar findings have been found for twisted transition metal dichalcogenides (TMDs) \cite{SuperTMD2025}, and in trilayer graphene systems \cite{TTG_super2022}. Quasi-band structure of single-particle models has proven to be a powerful predictor  of these exotic many-body effects via the presence of flatbands at the Fermi level \cite{Bistritzer2011, Cao2018}. In the case of twisted trilayer graphene (TTG) with three different angles, one obtains a ``double'' incommensuration. The computation of electronic structure using a continuum model produces a high dimensional scattering structure \cite{Zoe2020}, which becomes computationally restrictive. In this work, we use the momentum space framework introduced in \cite{massatt_incommensurate_2018, massatt_electronic_2023} to transform tight-binding models for twisted trilayer graphene into momentum space, and construct an efficient algorithm for evaluating the density of states (DoS) and the momentum local density of states (momentum LDoS), an object parallel to the bilayer's quasi-band structure. We note that while we focus on TTG, this algorithm can be used for any material with energy-momenta confinement such as Dirac cones or parabolic bands with sufficiently weak interlayer coupling, for example TMDs at the parabolic bands. We rigorously prove error estimates in all approximation parameters, and numerically verify convergence by testing our algorithm on the ab initio tight-binding model for incommensurate graphene layered materials introduced in \cite{fang_electronic_2016}. We find converged images of the momentum LDoS for small smearing values, clearly illustrating flatband like behavior in magic angle pairs for an ab initio model of TTG. We also numerically observe by employing our truncation scheme on both an approximate continuum model and our momentum space model that higher order corrections inherent in the momentum space model make qualitatively important alterations to the momentum LDoS near the flatband regime (see \zcref{sec:numerics}).

A careful understanding of the single particle model is critical for studying many-body effects. Indeed, many-body moiré effects are modeled by restricting first to a reduced single-particle basis near the flatbands. In \cite{Bernevig2022}, a many-body Heavy Fermion model was developed using the Bistrtizer-MacDonald model (BM model) for TBG as a reduced basis to explain correlated insulation, while in \cite{LinLin2025} a mathematical proof of a correlated phase in twisted bilayer graphene is given for a model using the flatbands of the BM model as a basis for a many-body model. Theoretical investigation of twisted TMDs is reviewed in \cite{TMD2025}, and an explanation for the fractional quantum hall effect for TBG is developed in \cite{Ledwith2020}. 

There has been much development on rigorous derivation of single-particle momentum space models for incommensurate bilayer systems. Derivation of a continuum model for TBG directly from Schr\"{o}dinger using semiclassical calculus was done in \cite{cances2023}.  It was shown in \cite{Quan2025} that the popular continuum models for TBG such as the BM model can be derived via Taylor expansions of the momentum space transformed tight-binding model of TBG, while in \cite{watson2023} the BM model is justified as an approximation of tight-binding using wavepacket analysis. 
for double-incommensurate trilayer graphene, mechanical relaxation continuum models are developed and computed in \cite{Relax2020}, and electronic structure using real space methods incorporating mechanical relaxation is developed in \cite{Meng2023}. In \cite{Mora2019}, approximate symmetries are used to introduce an approximate continuum model with significant reduction in computational complexity, which introduces an operator admitting band structure. In \cite{Zoe2020}, the authors used a continuum model approximation of tight-binding that kept the complexities from broken symmetries, and used a computational truncation exhibiting four-dimensional degree of freedom scaling producing quasi-band structure of the tight binding model. 
See also the review of double-incommensurate trilayer 2D materials in \cite{ReviewTTG2025}. In \cite{wilson2025}, fractal state behavior was numerically predicted using the same continuum model of TTG, indicating singular spectrum effects.

In this work, we directly transform the tight-binding model into momentum space, which we prove produces DoS and momentum LDoS identical to the tight binding model, hence avoiding accruing extra error from using a continuum model approximation. Next, we exploit the Dirac cone structure in the monolayer graphene to exploit an energy-momenta confinement. In essence, momenta where the monolayer energy is far from the energy of the Dirac point will contribute weakly to the DoS and momentum LDoS at energies near the Dirac point, and can be neglected. We will focus exclusively on the case that the twist angles between the graphene layers are small, which we show results in an efficient truncation of degrees of freedom to wavenumbers near the Dirac point, effectively reducing the four-dimensional computational scaling to a two dimensonal scaling. We use Combes-Thomas estimates to estimate the remaining lattice truncation, and use the Kernel Polynomial Method (KPM) \cite{weise_kernel_2006} to approximate the momentum LDoS as the used matrices are too large to efficiently use diagonalization. We also implement the continuum model in \cite{Zoe2020} using our truncation, and compare momentum LDoS results with momentum space to illustrate continuum approximation effects as compared to the tight-binding.

The rest of the paper is organized as follows. In \zcref{sec:model}, we introduce the real space model and the density of states observables. In \zcref{sec:momentum}, we introduce the momentum space transformation, reciprocal space, and the momentum LDoS, a parallel object to band structure for periodic crystals. In \zcref{sec:algorithm}, we develop our new algorithm and give rigorous convergence rates, which we numerically verify and show for TTG using an ab initio model \cite{fang_electronic_2016} including the flatband behavior at magic angle pairs and angle pairs with an irrational ratio in \zcref{sec:numerics}. We conclude in \zcref{sec:conclusion}.

\section*{Acknowledgments}

DM was supported by AFRL grant FA9550-24-1-0177. KB was supported by the LSU Huel D. Perkins Doctoral Fellowship.

\section{The Model}
\label{sec:model}

\subsection{Monolayer Graphene}
Monolayer graphene is a layer of carbon atoms arranged on a honeycomb lattice. We define its Bravais lattice $\mathcal{R} = A\mathbb{Z}^2$ and $B$-sublattice vector where
\begin{align*}
   & A = \frac{a}{2}\begin{bmatrix}
        2 & 1 \\
        0 & \sqrt{3}
        \end{bmatrix}, &     \tau_B = \frac{a}{2}\begin{bmatrix}
        1\\
        \frac{\sqrt{3}}{3}
    \end{bmatrix}.
\end{align*}
with lattice constant $a = 1.42\sqrt{3}$.
The reciprocal lattice is $\mathcal{R}^*  = B\mathbb{Z}^2$ with unit cell $\Gamma^*$,
\begin{align}
    &\Gamma^* = B[0,1)^2, &    B = 2\pi A^{-T} = \frac{2\pi}{3a}\begin{bmatrix}
        3 & 0 \\
        -\sqrt{3} & 2\sqrt{3}
    \end{bmatrix}.
\end{align}
Throughout this work, we consider a two orbital model, that is, each atomic site is identified with an orbital. By $\Omega$ we denote the degrees of freedom, that is,
\begin{equation}
\Omega = \mathcal{R}\times\mathcal{A}
\end{equation}
where $\mathcal{A} = \{A,B\}$ are the sublattice indices. The real space Hamiltonian $H$ acts on a wave function $\psi\in\ell^2(\Omega)$
by 
\begin{equation}
\left(H\psi\right)_{(R,\alpha)} = \sum_{(R',\beta)\in\Omega}\left(h_\beta^\alpha\right)_{R-R'}\psi_{(R',\beta)}
\end{equation}
for prescribed discrete hopping functions $h_\beta^\alpha\in\ell^1(\mathcal{R})$. 
We define the unitary Bloch transform $\mathcal{G} : \ell^2(\Omega)\rightarrow L^2(\Gamma^*;\mathbb{C}^2)$ by
\begin{equation}
    (\mathcal{G} \psi)_\alpha(q) = |\Gamma^*|^{-1/2} \sum_{R \in \mathcal{R}} e^{-iq\cdot R}\psi_{R\alpha},
\end{equation}
then
\begin{equation}
[\mathcal{G} H\mathcal{G}^*]^\alpha_\beta(q) = \sum_{R\in\mathcal{R}}e^{-iq\cdot(R+\tau_{\alpha}-\tau_{\beta})}(h_{\beta}^{\alpha})_R.
\end{equation}
Here $\mathcal{G} H\mathcal{G}^*$ is understood as a matrix-valued multiplication operator acting point-wise in $q$.
Conic dispersion patterns appear at $K,K' \in \Gamma^*$,
\begin{align}
    &K = \frac{4\pi}{3a}\begin{bmatrix}
        1\\ 0
    \end{bmatrix}, &     K'  = \frac{2\pi}{3a}\begin{bmatrix}
        1\\
        \sqrt{3}
    \end{bmatrix}.
\end{align}
For example, the Taylor expansion of $\mathcal{G} H\mathcal{G}^*(q)$ about $K$ to leading order is $v_f q \cdot (\sigma_1,-\sigma_2) := v_f(q_x \sigma_1 - q_y\sigma_2)$ with eigenvalues $\pm v_f|q|$ where $v_f$ is the Fermi velocity and $\sigma_j$ is the $j$th standard Pauli matrix.

\subsection{Double-Incommensurate Twisted Trilayer Graphene}
\begin{figure}[htpb]
        \centering
        \includegraphics[width = 1\textwidth, alt = {An image of the atomic configuration of twisted trilayer graphene for $\theta = [-1.4,0,2.8]$}]{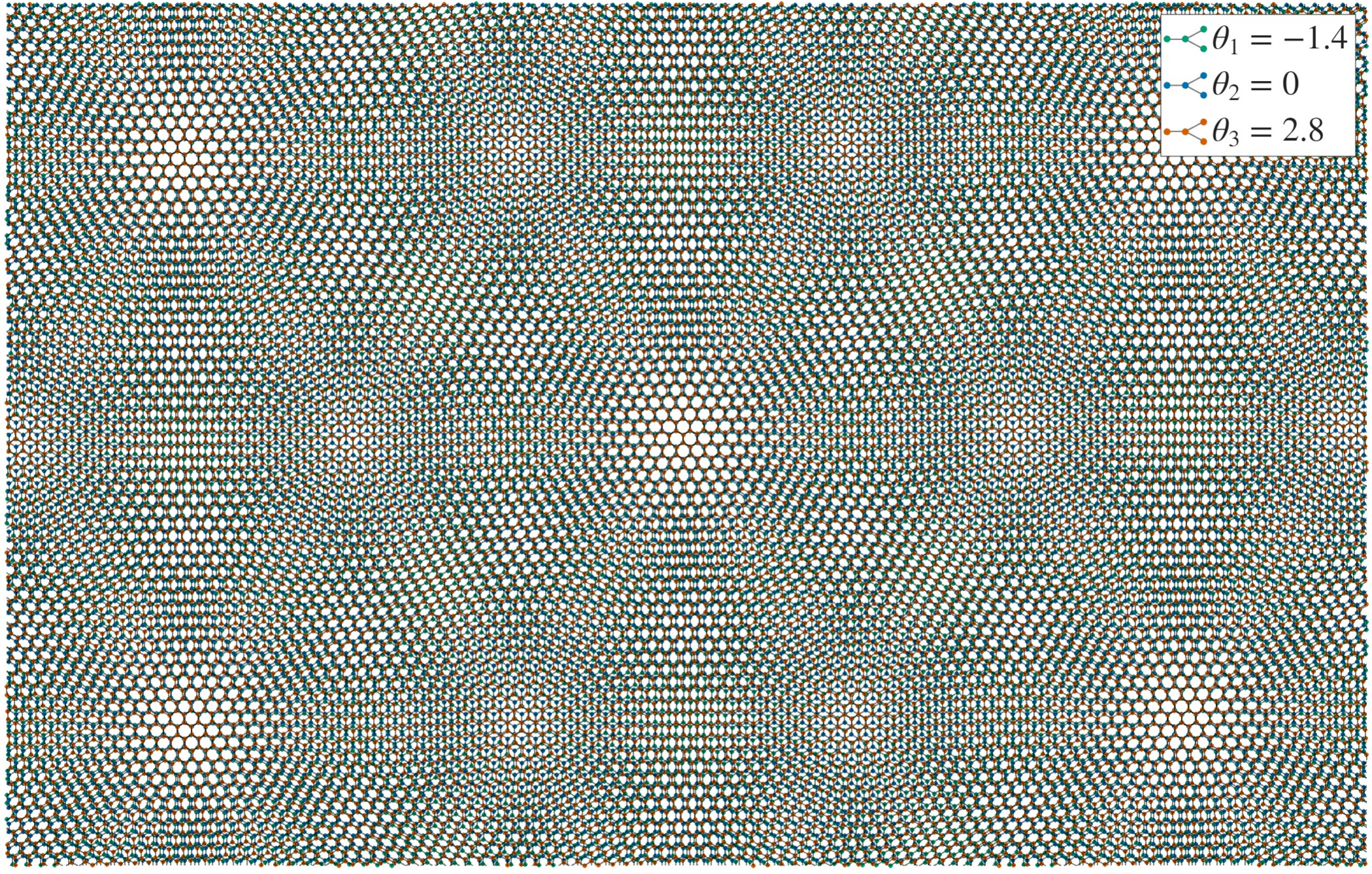}
    \caption{Atomic configuration of twisted trilayer graphene for $\theta = [-1.4,0,2.8]$. The interatomic distance has been scaled from $1.42$ to $0.5$ in order to better display the geometric moir\'{e} pattern. The interfering moir\'{e} patterns preclude the existence of a periodic supercell.}
    \label{fig:multiGraph}
\end{figure}
Let $(\theta_{j})_{j=1}^{N}$ be a sequence of independent twist angles. In addition, let $\mathfrak{R}_\theta$ denote the counterclockwise $2$D rotation matrix, that is,
\begin{equation}
    \mathfrak{R}_\theta = 
    \begin{bmatrix}
        \cos{\theta} & -\sin{\theta}\\
        \sin{\theta} & \cos{\theta}
    \end{bmatrix}.
\end{equation}
Twisted trilayer graphene (TTG) is modeled by a sequence of Bravais lattices $\mathcal{R}_j = \mathfrak{R}_{\theta_j}\mathcal{R}$ with associated unit cells $\Gamma_j =\mathfrak{R}_{\theta_j}\Gamma$ and a sequence of orbital indices $\mathcal{A}_j = \{jA, jB\}$ where $\tau_{jB}=\mathfrak{R}_{\theta_j}\tau_{B}$ and $j\in\{1,2,3\}$. We often denote by $\mathcal{A}$ the union of orbital indices over the layers, that is, $\mathcal{A} = \bigcup_{j=1}^3\mathcal{A}_j$.
The associated sequence of reciprocal lattices is $\mathcal{R}_j^* = \mathfrak{R}_{\theta_j}\mathcal{R}_j^*$ with reciprocal unit cells $\Gamma_j^* = \mathfrak{R}_{\theta_j}\Gamma_j^*$ where $j\in\{1,2,3\}$. The $K$-points of each layer are similarly rotated, that is, $K_j = \mathfrak{R}_{\theta_j}K$ and $K_j'=\mathfrak{R}_{\theta_j}K'$ where $j\in\{1,2,3\}$.

In order to distinguish between stacks of TTG that have some commensuration between subsequent layers and those with no commensuration throughout the entire heterostructure, we introduce the following definition.
\begin{definition}
    For $k\in\{1,2,\ldots,N-1\}$, we say that a sequence of Bravais lattices $(\mathcal{R}_j)_{j=1}^N$ is \textbf{$k$-incommensurate} if 
    \emph{every} sub-sequence $(\mathcal{R}_j)_{j \in I}$ with $|I| \leq k+1$ satisfies
    \begin{equation}
        \sum_{j \in I} R_j = 0 
        \quad \Longleftrightarrow \quad 
        R_j = 0 \text{ for all } j \in I \text{ where } R_j\in\mathcal{R}_j.
    \end{equation}
\end{definition}
As evidence that this is a stronger definition than that of pairwise incommensuration, we give the following example. 
\begin{example}
    The stack of $1$d chains defined by the periodic triple $\left(1, \frac{1}{\pi}, \frac{1}{\pi+1}\right)$ is $2$-incommensurate. However, the reciprocal stack with periodic triple $(2\pi, 2\pi^2, 2\pi+2\pi^2)$ is only $1$-incommensurate.
\end{example}
Consequently, we make the following assumption regarding the TTG stacks we consider in this work:
\begin{assumption}
    \label{ass:incom}
    Both the sequence of Bravais lattices $(\mathcal{R}_j)_{j=1}^{3}$ and the reciprocal sequence $(\mathcal{R}_j^*)_{j=1}^{3}$ are $2$-incommensurate.
\end{assumption}
Henceforth, we refer to $2$-incommensurate TTG as double-incommensurate (with regard to both the minimum number of angles and the minimum number of unique incommensurations). 

By $\Omega$ we denote the degrees of freedom associated with $((\mathcal{R}_j)_{j=1}^3,\mathcal{A})$, that is,
\begin{equation}
    \Omega = \bigsqcup_{j=1}^3\mathcal{R}_j\times\mathcal{A}_j
\end{equation}
where $\mathcal{R}_j$ and $\mathcal{A}_j$ are the layer $j$ Bravais lattices and atomic orbitals, respectively. Observe that
\begin{equation}
    \ell^2(\Omega)\cong\bigoplus_{j=1}^3\ell^2(\Omega_j)\cong\bigoplus_{j=1}^3\ell^2(\mathcal{R}_j)\otimes \mathbb{C}^{2}.    
\end{equation}
The real space Hamiltonian $H$ acts as a $3\times 3$ block operator matrix on a compatibly partitioned wave function $\psi\in\ell^2(\Omega)$, that is, 
\begin{gather}
    H\psi = 
    \begin{bmatrix}
        H_{11} & T_{12} & T_{13}\\
        T_{12}^* & H_{22} & T_{23}\\
        T_{13}^* & T_{23}^* & H_{33}
    \end{bmatrix}
    \begin{bmatrix}
        \psi_1\\ \psi_2\\ \psi_3
    \end{bmatrix}
    =
    \begin{bmatrix}
        H_{11}\psi_1+T_{12}\psi_2+T_{13}\psi_3\\
        T_{12}^*\psi_1+H_{22}\psi_2+T_{23}\psi_3\\
        T_{13}^*\psi_1+T_{23}^*\psi_2+H_{33}\psi_3
    \end{bmatrix}.
\end{gather}
where
\begin{align}
    \left[H_{jk}\psi\right]_{(R,j\alpha)} &= \delta_{k}^{j}\sum_{(R,j\beta)\in\mathcal{R}_j}[h_{j\beta}^{j\alpha}]_{R-R'}\psi_{(R',j\beta)}\\
    &+(1-\delta_{k}^{j})\sum_{(R',k\beta)\in\mathcal{R}_{k}}h_{k\beta}^{j\alpha}(R+\tau_{j\alpha}-R'-\tau_{k\beta})\psi_{(R',k\beta)}
\end{align}
for prescribed interlayer hopping functions $h_{k\beta}^{j\alpha}\in C(\R^2;\C)$.

We are interested in bounded operators that satisfy certain decay estimates.
\begin{assumption}
    \label{ass:bound}
    There exists $\gamma\in\R_+^2$ such that for all $(j\alpha, k\beta)\in\mathcal{A}^2$, we have
    \begin{gather}
        \|h_{k\beta}^{j\alpha}\| = \sup_{x\in\mathbb{R}^d}|h_{k\beta}^{j\alpha}(x)|e^{\gamma_1\|x\|_2}+\sup_{\xi\in\mathbb{R}^d}|\widehat{h}_{k\beta}^{j\alpha}(\xi)|e^{\gamma_2\|\xi\|_2} \quad\text{and}\\
        \|h_{j\beta}^{j\alpha}\|_{\ell_\gamma^\infty(\mathcal{R}_j)}=\sup_{R\in\mathcal{R}_j}|(h_{j\beta}^{j\alpha})_R|e^{\gamma\|R\|_2}.
    \end{gather}
\end{assumption}
Here $\hat{h}_{k\beta}^{j\alpha}$ is the continuous Fourier transform
\begin{equation}
    \hat{h}_{k\beta}^{j\alpha}(\xi)=\frac{1}{4\pi^2}\int_{\mathbb{R}^2}h_{k\beta}^{j\alpha}(x)e^{-i\xi\cdot x}dx.
\end{equation}

\begin{remark}
 \zcref{ass:bound} guaranties that the discrete Fourier transform of the intralayer hopping functions and the continuous Fourier transform of the interlayer hopping functions are simultaneously analytic within a strip of width $\min(\gamma_1,\gamma_2)$.
\end{remark}
As with most physical operators, we require that $H$ be self-adjoint. Further, we use the model assumption that only nearest neighbor layers interact as in \cite{fang_electronic_2016}, and hence $3$-layer Hamiltonian $H$ can be represented as an $3\times 3$ tridiagonal block operator.
\begin{assumption}
    \label{ass:herm}
    For all $(j\alpha, k\beta)\in\mathcal{A}^2$ such that $j\neq k$, the following hermiticity conditions hold
    \begin{equation}
        (h_{j\beta}^{j\alpha})_{R} = \overline{(h_{j\alpha}^{j\beta}})_{-R}\;\;\text{and}\;\; h_{k\beta}^{j\alpha}(x) = \overline{h_{j\alpha}^{k\beta}(-x)}
    \end{equation}
    for all $R\in\mathcal{R}_{j}$ and $x\in\R^2$. Further, $T_{13} = O$ and $T_{31} = O$ where $O$ denotes the zero operator.
\end{assumption}

The primary goal of this work is to produce a convergent algorithm to approximate the regularized density of states and the momentum LDoS, which is a comparable object to quasi-band structure. Let $\delta_\epsilon:\mathbb{R}\to\mathbb{R}$ denote a Gaussian with standard deviation $\epsilon$, 
\begin{gather}
   \delta_\epsilon(E) = \frac{1}{\epsilon\sqrt{2\pi}}e^{-\frac{E^2}{2\epsilon^2}}.
\end{gather}
Additionally, let $\mathcal{O}(H)$ denote the set of all complex valued functions $f$ analytic in some open neighborhood $U\subset\C$ such that $\sigma\left(H\right)\subset U$. Since the existence of the density of states measure is unknown for incommensurate systems, we introduce the regularized density of states.
\begin{definition}
    Given the TTG tight-binding Hamiltonian $H$ described above, the \textit{real regularized density of states} $\mathcal{D}_\epsilon:\mathbb{R}\to\mathbb{C}$ is defined by
    \begin{equation}
        \mathcal{D}_\epsilon(E) = \underline{\operatorname{Tr}}\;\delta_\epsilon(E-H)
    \end{equation}
    where $\underline{\operatorname{Tr}}:\mathcal{O}(H)\to\mathbb{C}$ is a \textit{thermodynamic limit trace} defined by
    \begin{equation}
        \underline{\operatorname{Tr}} f(H) = \lim_{r\to\infty}\frac{1}{|\Omega_{r}|}\sum_{(R,j\alpha)\in\Omega_r}[f(H)]_{(R,j\alpha)}^{(R,j\alpha)}
    \end{equation}
    and
    \begin{equation}
        \Omega_r = \{(R,j\alpha)\in\Omega:\|R\|_2<r\}.
    \end{equation}
\end{definition}
\begin{remark}The limit $\epsilon \rightarrow 0$ defines the density of states as a spectral measure, and when it exists, $\mathcal{D}(E) = \lim_{\epsilon \rightarrow 0}\mathcal{D}_\epsilon(E)$ is the density of states function or the Radon-Nikodym derivative of the density of states measure. For incommensurate materials, the existence of this Radon-Nikodym derivative is unknown. For applications, however, the regularized $DoS$ with $\epsilon$ sufficiently small is sufficient.
\end{remark}

\section{Momentum and Reciprocal Space}
\label{sec:momentum}

In this section, we introduce the momentum space transformation that describes how momenta couple. We also introduce a reciprocal space formulation, which is an infinite matrix capturing all scattering channels.
\begin{definition}
     \label{defn:Bloch}
    For each $j\in\{1,2,3\}$, the \textit{layer Bloch transform} $\mathcal{G}_j:\ell^2(\Omega_j)\to L^2(\Gamma_j^*;\mathbb{C}^2)$ is defined by
    \begin{equation}
        [\mathcal{G}_j\psi_j]_{j\alpha}(q) = \frac{1}{c_j^*}\sum_{R\in\mathcal{R}_j}e^{-iq\cdot(R+\tau_{j\alpha})}\psi_{(R, j\alpha)}\quad\text{where}\quad c_j^* = \sqrt{|\Gamma_j^*|}
    \end{equation}
    The \textit{Bloch transform} $\mathcal{G}:\ell^2(\Omega)\to \oplus_{j=1}^3L^2(\Gamma_j^*;\mathbb{C}^6)$ is defined by
    \begin{equation}
        \mathcal{G} = \bigoplus_{j=1}^{3}\mathcal{G}_j.
    \end{equation}
\end{definition}
It is easily seen that $\mathcal{G}_j$ is quasi-periodic, that is,
\begin{gather}
    [\mathcal{G}_j\psi_{j}]_\alpha(q+G_j)= e^{-iG_j\cdot\tau_{j\alpha}}[\mathcal{G}_j\psi_j]_{\alpha}(q).
\end{gather}
This leads us to our first proposition, where we obtain the momentum space representation of the Hamiltonian. We use the notation $f(\cdot)$ to indicate a multiplication operator, for example $[f(\cdot)\tilde\psi](q) = f(q)\tilde\psi(q)$.
\begin{proposition}
    \label{prop:momentum}
    Given a real space Hamiltonian $H\in\mathcal{L}(\ell^2(\Omega))$ for double incommensurate TTG, the \textit{momentum space Hamiltonian} $\widetilde{H}\in\mathcal{L}(L^2(\Gamma_{\oplus}^*;\mathbb{C}^6))$ is the unique operator satisfying the intertwining relation
    \begin{equation}
        \mathcal{G} H\psi = \widetilde{H}\mathcal{G}\psi.
    \end{equation}
    In particular,
    \begin{equation}
        \label{eq:momentum}
        \widetilde{H}_{k\beta}^{j\alpha} = \delta_{k}^{j}\sum_{R\in\mathcal{R}R_{j}}e^{-i(\cdot)\cdot(R+\tau_{j\alpha}-\tau_{j\beta})}\left(h_{j\beta}^{j\alpha}\right)_{R}+(1-\delta_{k}^{j})c_j^*c_k^*\sum_{G\in\mathcal{R}_{j}^*}e^{iG\cdot\tau_{j\alpha}}\widehat{h}_{k\beta}^{j\alpha}(\cdot+G)T_{G}
    \end{equation}
    for all $(j\alpha,k\beta)\in\mathcal{A}^2$ where $\widehat{h}_{k\beta}^{j\alpha}$ is the continuous Fourier transform
    \begin{equation}
        \widehat{h}_{k\beta}^{j\alpha}(\xi)=\frac{1}{(2\pi)^2}\int_{\mathbb{R}^2}h_{k\beta}^{j\alpha}(x)e^{-i\xi\cdot x}dx.
    \end{equation}
\end{proposition}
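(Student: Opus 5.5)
The plan is a direct computation: apply $\mathcal{G}$ to $H\psi$ block by block, rewrite the result as an operator acting on $\mathcal{G}\psi$, and read off $\widetilde{H}$. Uniqueness will follow because each $\mathcal{G}_j$ is unitary onto $L^2(\Gamma_j^*;\mathbb{C}^2)$, so $\mathcal{G}$ is unitary and $\widetilde{H}=\mathcal{G}H\mathcal{G}^*$ is forced. Boundedness of $\widetilde{H}$ is then inherited from $H$; it can also be checked directly from \zcref{ass:bound}, since the sums over $R$ and $G$ converge absolutely thanks to the exponential decay of $h$ and of $\widehat h$. We first verify the formula for finitely supported $\psi$, where all interchanges of sums are trivially justified, and then extend by density and continuity.

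\textbf{Intralayer blocks.} This is the classical monolayer computation from \zcref{sec:model}. We write
\[
[\mathcal{G}_jH_{jj}\psi_j]_{j\alpha}(q)=\frac{1}{c_j^*}\sum_{R,R',\beta}e^{-iq\cdot(R+\tau_{j\alpha})}(h^{j\alpha}_{j\beta})_{R-R'}\psi_{(R',j\beta)}
\]
and substitute $R=R''+R'$. The exponential then splits as $e^{-iq\cdot(R''+\tau_{j\alpha}-\tau_{j\beta})}e^{-iq\cdot(R'+\tau_{j\beta})}$. Summing over $R'$ produces $[\mathcal{G}_j\psi_j]_{j\beta}(q)$, and the remaining sum over $R''$ is the stated multiplier $\sum_{R}e^{-iq\cdot(R+\tau_{j\alpha}-\tau_{j\beta})}(h^{j\alpha}_{j\beta})_R$.

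\textbf{Interlayer blocks ($j\neq k$).} Write $h^{j\alpha}_{k\beta}(x)=\int\widehat h(\xi)e^{i\xi\cdot x}\,d\xi$, which converges absolutely because $\widehat h$ decays exponentially. Then
\[
[\mathcal{G}_jT_{jk}\psi_k]_{j\alpha}(q)=\frac{1}{c_j^*}\sum_{R\in\mathcal{R}_j}\sum_{R',\beta}\int\widehat h(\xi)\,e^{i(\xi-q)\cdot(R+\tau_{j\alpha})}e^{-i\xi\cdot(R'+\tau_{k\beta})}\psi_{(R',k\beta)}\,d\xi .
\]
The $R'$-sum equals $c_k^*[\mathcal{G}_k\psi_k]_{k\beta}(\xi)$, where the Bloch transform is extended quasi-periodically to all of $\mathbb{R}^2$. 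The $R$-sum is handled by the Poisson summation formula,
\[
\sum_{R\in\mathcal{R}_j}e^{i(\xi-q)\cdot R}=|\Gamma_j^*|\sum_{G\in\mathcal{R}_j^*}\delta(\xi-q-G),
\]
with $|\Gamma_j^*|=(c_j^*)^2$. Integrating against the delta sets $\xi=q+G$ and produces the factor $e^{iG\cdot\tau_{j\alpha}}$. This yields
\[
c_j^*c_k^*\sum_{G\in\mathcal{R}_j^*}e^{iG\cdot\tau_{j\alpha}}\,\widehat h^{j\alpha}_{k\beta}(q+G)\,[\mathcal{G}_k\psi_k]_{k\beta}(q+G).
\]
Here $T_G$ is interpreted as the shift $(T_G\phi)(q)=\phi(q+G)$, with $q+G$ reduced into $\Gamma_k^*$ via the quasi-periodicity relation. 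This is exactly \eqref{eq:momentum}.

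\textbf{Main obstacle.} The delicate step is justifying Poisson summation and the interchange of sum and integral, since $\sum_R e^{i(\xi-q)\cdot R}$ is only a distribution. The cleanest route avoids distributions. For fixed finitely supported $\psi$, the function $F(x)=\sum_{R',\beta}h(x-R'-\tau_{k\beta})\psi_{(R',k\beta)}$ is a finite sum of exponentially decaying continuous functions. The classical Poisson formula $\sum_{R}F(R+\tau)e^{-iq\cdot(R+\tau)}=|\Gamma_j^*|\sum_G\widehat F(q+G)e^{iG\cdot\tau}$ then holds pointwise, because both $F$ and $\widehat F$ decay exponentially by \zcref{ass:bound}. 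Moreover $\widehat F(\xi)=\widehat h(\xi)\sum_{R',\beta}e^{-i\xi\cdot(R'+\tau_{k\beta})}\psi_{(R',k\beta)}$ holds exactly.

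The remaining steps are routine. Collecting constants gives the $c_j^*c_k^*$ normalization. The operator bound $\sum_G|\widehat h(q+G)|\le C$ follows from the exponential decay, so $\widetilde H$ is bounded and density extends the identity to all $\psi\in\ell^2(\Omega)$. Finally, the Hermiticity in \zcref{ass:herm} is consistent with $\widetilde H$ being self-adjoint, although this is not needed for the intertwining relation itself.
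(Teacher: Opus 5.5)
Your proposal is correct and follows essentially the same route as the paper: you reindex $R=R''+R'$ for the intralayer blocks, and for the interlayer blocks you use Fourier inversion of $h^{j\alpha}_{k\beta}$ followed by Poisson summation over $\mathcal{R}_j$, which produces the coefficients $c_j^*c_k^*e^{iG\cdot\tau_{j\alpha}}\widehat{h}^{j\alpha}_{k\beta}(q+G)$. Your additions supply rigor that the paper's formal computation leaves implicit: uniqueness from unitarity of $\mathcal{G}$, and replacing the distributional identity $\sum_R e^{i\xi\cdot R}=|\Gamma_j^*|\sum_G\delta(\xi-G)$ with the classical pointwise Poisson formula for the exponentially decaying $F$ on finitely supported $\psi$, then extending by density.
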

Taking advantage of the ergodic structure of the momentum Hamiltonian $\widetilde{H}$, we unfold onto an infinite double-incommensurate lattice to produce the reciprocal Hamiltonian $\widehat{H}(q)$ where the momenta $q$ will center the scattering description. To define the unfolding, we create the reciprocal degree of freedom space denoted $\Omega^*$,
\begin{align}
   &\Omega^* = \bigsqcup_{j=1}^{N}\Omega_{j}^*, & \Omega_{j}^* =  \kappa_j^*\times\mathcal{A}_j,\hspace{.5cm} &\kappa_j^* = \left\{G\in\bigoplus_{t = 1}^{N}\mathcal{R}_t^*: G_j = 0\right\}.
\end{align}
Each degree of freedom denotes a momenta, and the operator $\widehat{H}$ we have yet to derive will describe the hopping functions between the momenta indexed by $\Omega^*$.
Let $\chi_j^a$ denote the space of analytic square integrable complex vector valued functions defined on $\Gamma_j^*$.  $\chi_j^a$ comes equipped with the standard $L^2$ inner product, that is,
\begin{equation}
    (\widetilde{\psi},\widetilde{\phi})_{\chi_j^a} = \sum_{j\alpha\in\mathcal{A}_j}\int_{\Gamma_j^*}\overline{\widetilde{\psi}_{j\alpha}(q)}\widetilde{\phi}_{j\alpha}(q)dq
\end{equation}
for all $\widetilde{\psi},\widetilde{\phi}\in\widetilde{\chi_j}^a$.
With slight abuse of notation, we use $\widetilde{\psi}_j\in\chi_j^a$ also to denote its periodic extension.  
We define an unfolding operator as follows.
\begin{definition}
    \label{defn:unfolding}
    For each $q\in\mathbb{R}^2$ and $j\in\{1,2,3\}$, the \textit{unfolding operators} $\mathcal{E}_{jq}:\chi_j^a\to\mathcal{E}_{jq}(\chi_j^a)\subset\ell^\infty(\Omega_j^*)$ and $\mathcal{E}_{q}:\chi^a\to\mathcal{E}_q(\chi^a)$ are defined by
    \begin{align}
       &\left(\mathcal{E}_{jq}\widetilde{\psi}_j\right)_{(G,j\alpha)}  = \widetilde{\psi}_{j\alpha}\left(q+\sum_{t = 1}^{3}G_t\right), & \mathcal{E}_q = \bigoplus_{j=1}^{3}\mathcal{E}_{jq}.
    \end{align}
    For each $j\in\{1,2,3\}$, the \textit{layer unfolding operator} $\mathcal{E}_j:\chi_j^a\to \mathcal{H}_j$ is defined by 
    \begin{equation}
        \mathcal{E}_j = \int_{\Gamma_j^*}^{\oplus}\mathcal{E}_{jq}dq\quad\text{where}\quad
        \mathcal{H}_j = \int_{\Gamma_j^*}^\oplus\mathcal{E}_{jq}(\chi_j^a).
    \end{equation}
\end{definition}
In order to define $\mathcal{E}_{jq}^*$, we introduce an ergodic inner product space.
\begin{lemma}
    \label{lem:ergProd}
    Under \zcref{ass:incom}, the binary function $(\cdot,\cdot)_{j,\mathrm{erg}}:\mathcal{E}_{jq}(\chi_j^a)\times\mathcal{E}_{jq}(\chi_j^a)\to\C$, defined by
    \begin{equation}
        (\widehat{\psi},\widehat{\phi})_{j,\mathrm{erg}} = \lim_{r\to\infty}\frac{1}{|\Omega_{jr}^*|}\sum_{(G,j\alpha)\in\Omega_{jr}^*}\overline{\widetilde{\psi}_{(G,j\alpha)}}\widetilde{\phi}_{(G,j\alpha)},
    \end{equation}
    is well defined and is an inner product, which we denote the ergodic inner product. Moreover, we have the relation
    \begin{equation}
        (\mathcal{E}_{jq}\tilde \psi,\mathcal{E}_{jq}\tilde \phi)_{j,\mathrm{erg}} = |\Gamma_j^*|^{-1} (\widetilde{\psi},\widetilde{\phi})_{\chi_j^a}.
    \end{equation}
\end{lemma}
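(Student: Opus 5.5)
The key step is an ergodic (equidistribution) statement: by \zcref{ass:incom}, the points $\sum_{t\neq j} G_t$, with $G_t\in\mathcal{R}_t^*$, equidistribute modulo $\mathcal{R}_j^*$ over $\Gamma_j^*$. Everything else (existence of the limit, sesquilinearity, the identity, definiteness) follows from this.

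Fix $j$ and write $\kappa_j^*\cong\bigoplus_{t\neq j}\mathcal{R}_t^*$. Elements of $\mathcal{E}_{jq}(\chi_j^a)$ have the form $\widehat\psi_{(G,j\alpha)}=\widetilde\psi_{j\alpha}(q+\sum_{t\ne j}G_t)$, where $\widetilde\psi$ is the periodic extension. The function $g=\overline{\widetilde\psi_{j\alpha}}\widetilde\phi_{j\alpha}$ is therefore $\mathcal{R}_j^*$-periodic. It is also continuous, indeed real-analytic. Setting $S(G)=\sum_{t\neq j}G_t$, the quantity to compute becomes
\[
\lim_{r\to\infty}\frac{1}{|\Omega_{jr}^*|}\sum_{(G,j\alpha)\in\Omega^*_{jr}} g\bigl(q+S(G)\bigr).
\]
This is a Birkhoff-type average of a continuous function on the torus $\R^2/\mathcal{R}_j^*$ along the $\Z^4$-action $G\mapsto q+S(G)\bmod\mathcal{R}_j^*$.

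The plan is to first prove the identity for characters $e_H(x)=e^{iH\cdot x}$ with $H\in\mathcal{R}_j$, the dual lattice of $\mathcal{R}_j^*$ up to the $2\pi$ convention. The average factorizes as
\[
e^{iH\cdot q}\prod_{t\neq j}\Bigl(\text{average of } e^{iH\cdot G_t} \text{ over } G_t\in\mathcal{R}_t^*\Bigr),
\]
provided the truncation $\Omega^*_{jr}$ is a product of balls in each $\mathcal{R}_t^*$, or at least asymptotically equivalent to one. The factor for a given $t$ is a normalized lattice sum of a character on $\mathcal{R}_t^*$. It tends to $0$ unless $e^{iH\cdot G_t}=1$ for all $G_t\in\mathcal{R}_t^*$, that is, unless $H\in\mathcal{R}_t$.

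If $H\neq 0$ and $H\in\mathcal{R}_j\cap\mathcal{R}_t$, then $H-H=0$ gives a nontrivial relation $R_j+R_t=0$, contradicting the $2$-incommensurability of $(\mathcal{R}_j)$ in \zcref{ass:incom}; even pairwise incommensurability suffices here. Hence every nonzero character averages to $0$. The zero character averages to $1$, which matches the normalized integral $|\Gamma_j^*|^{-1}\int_{\Gamma_j^*}g$. Only the direct-space half of \zcref{ass:incom} is actually used.

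Next I extend from characters to general $g$. Since $g$ is analytic and periodic, its Fourier series converges absolutely and uniformly. A Weierstrass density argument works in any case: for $g$ continuous, approximate it uniformly by trigonometric polynomials and exchange the limits via a $3\epsilon$ argument. This gives existence of the limit and
\[
(\mathcal{E}_{jq}\widetilde\psi,\mathcal{E}_{jq}\widetilde\phi)_{j,\mathrm{erg}}=|\Gamma_j^*|^{-1}\sum_{\alpha}\int_{\Gamma_j^*}\overline{\widetilde\psi_{j\alpha}}\widetilde\phi_{j\alpha}\,dq .
\]
The factor $2$ from the two orbitals cancels between numerator and $|\Omega_{jr}^*|$ up to the normalization convention. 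I would track this carefully, since the stated constant is $|\Gamma_j^*|^{-1}$.

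Sesquilinearity and Hermitian symmetry are immediate from the definition. Positive definiteness is where analyticity matters. If $(\widehat\psi,\widehat\psi)_{j,\mathrm{erg}}=0$, then $\widetilde\psi=0$ in $L^2(\Gamma_j^*)$, so $\widetilde\psi\equiv0$ by continuity. Hence $\widehat\psi=0$ pointwise, and the pairing is an honest inner product rather than a semi-inner product on $\mathcal{E}_{jq}(\chi_j^a)$. Well-definedness also needs the map $\widetilde\psi\mapsto\mathcal{E}_{jq}\widetilde\psi$ to be injective, so that the form genuinely depends only on $\widehat\psi$. This injectivity follows from the same equidistribution: the orbit $q+S(G)$ is dense modulo $\mathcal{R}_j^*$, and $\widetilde\psi$ is continuous.

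The main obstacle is the character-sum limit for a general truncation $\Omega^*_{jr}$. If it is a Euclidean ball in the $4$-dimensional index space rather than a product of balls, the average does not factorize. In that case I would write the sum over the two-dimensional lattice $\{(G_t,G_s)\}$ as a sum of $e^{iH\cdot G_t}e^{iH\cdot G_s}$ over a convex region. I would then show decay $o(r^4)$ by summation by parts in a lattice direction on which $e^{iH\cdot(\cdot)}$ is a nontrivial character, bounding the boundary contribution by $O(r^3)$.
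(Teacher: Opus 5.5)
Your argument is correct, and it takes a different route from the paper. The paper does not prove any equidistribution itself. It verifies Hermitian symmetry and linearity by manipulating the limits, tacitly assuming they exist. For definiteness and the norm identity it cites Proposition~3.5 of Canc\`es et al.\ together with the identity theorem, and it writes the computation only for $\widetilde\phi=\widetilde\psi$; the off-diagonal identity then follows by polarization.

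You instead prove the required ergodic average directly via Weyl's criterion. A nonzero character $e^{iH\cdot x}$ with $H\in\mathcal{R}_j\setminus\{0\}$ averages to zero over $\kappa_j^*$, because pairwise incommensurability gives $H\notin\mathcal{R}_k$. Uniform approximation by trigonometric polynomials then extends this to every continuous $\mathcal{R}_j^*$-periodic $g=\overline{\widetilde\psi_{j\alpha}}\widetilde\phi_{j\alpha}$.

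Your route is self-contained and makes explicit several things the paper leaves implicit:
\begin{itemize}
\item It gives existence of the limit for every pair and every fixed $q$. This is exactly where continuity of the periodic extension of $\widetilde\psi$ is needed, i.e.\ analyticity on the torus rather than merely on the open cell.
\item It shows that only $\mathcal{R}_j\cap\mathcal{R}_t=\{0\}$ for $t\neq j$ is used.
\item It shows the conclusion does not depend on the shape of $\Omega^*_{jr}$, which the paper never defines. Your line-by-line geometric-sum bound, $O(r^3)$ against $|\kappa^*_{jr}|\sim r^4$, handles the Euclidean ball in the four-dimensional index space suggested by the paper's definition of $\Omega^*_r$.
\end{itemize}
The paper's route buys brevity by deferring the ergodic step to the literature.

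One correction on normalization: the orbital factor does not cancel. Dividing by $|\Omega^*_{jr}|=|\mathcal{A}_j|\,|\kappa^*_{jr}|$, your computation gives $|\mathcal{A}_j|^{-1}|\Gamma_j^*|^{-1}(\widetilde\psi,\widetilde\phi)_{\chi_j^a}$. The stated constant $|\Gamma_j^*|^{-1}$ therefore holds only if one normalizes by $|\kappa^*_{jr}|$ instead. The paper's own proof contains the same slip, so this is a convention to fix in the statement rather than a gap in your argument.
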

This leads us to our second proposition, where we obtain the reciprocal representation of the Hamiltonian.
\begin{proposition}
    \label{prop:reciprocal}
    Given a momentum space Hamiltonian $\widetilde{H}\in\mathcal{L}(\oplus_{j=1}^3L^2(\Gamma_j^*;\mathbb{C}^2))$ for double incommensurate TTG, for each $q\in\mathbb{R}^2$, the \textit{reciprocal Hamiltonian} $\widehat{H}_q\in\mathcal{L}(\mathcal{E}_q\chi^a)$ satisfies the intertwining relation
    \begin{equation}
        \left(\mathcal{E}_q\widetilde{H}\widetilde{\psi}\right)_{(G,j\alpha)} = \left(\widehat{H}_q\mathcal{E}_q\widetilde{\psi}\right)_{(G,j\alpha)}.
    \end{equation}
    In particular,
    \begin{equation}
        \label{eq:reciprocal}
            \left[\widehat{H}_q\right]_{(G',k\beta)}^{(G,j\alpha)} = \delta_{k}^j\delta_{G'}^{G}\widetilde{h}_{j\beta}^{j\alpha}\left(q+G_k+G_l\right)+(1-\delta_k^j)\delta_{G_l'}^{G_l}T_{(G_j',\alpha)}^{(G_k,\beta)}\hat{h}_{k\beta}^{j\alpha}\left(q+G_j'+G_k+G_l\right)
    \end{equation}
    for all $((G,j\alpha),(G',k\beta))\in\Omega_j^*\times\Omega_k^*$ where
    \begin{gather}
        \widetilde{h}_{j\beta}^{j\alpha}(q) =\sum_{R\in\mathcal{R}_j}e^{-iq\cdot(R+\tau_{j\alpha}-\tau_{j\beta})}\left(h_{j\beta}^{j\alpha}\right)_{R}\quad\text{and}\quad
        T_{(G_j',\alpha)}^{(G_k,\beta)} = c_j^*c_k^*e^{i(G_j'\cdot\tau_{j\alpha}-G_k\cdot\tau_{k\beta})}.
    \end{gather}
\end{proposition}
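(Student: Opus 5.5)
The plan is to verify the intertwining relation entrywise. I fix $q\in\mathbb{R}^2$ and an index $(G,j\alpha)\in\Omega_j^*$. I write $l$ for the remaining layer index, so $\{j,k,l\}=\{1,2,3\}$, and I use $G_j=0$ to abbreviate the evaluation point as
\[
p:=q+\sum_t G_t=q+G_k+G_l.
\]
By \zcref{defn:unfolding}, $(\mathcal{E}_q\widetilde H\widetilde\psi)_{(G,j\alpha)}=(\widetilde H\widetilde\psi)_{j\alpha}(p)$. I then insert the explicit formula \eqref{eq:momentum} from \zcref{prop:momentum} and split it into its intralayer part ($k=j$) and its interlayer part ($k\neq j$). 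Throughout I read $T_{G}$ as the translation $(T_G\widetilde\phi)(p)=\widetilde\phi(p+G)$, acting on the quasi-periodic extension of $\widetilde\psi_k$.

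\textbf{Intralayer part.} This part is a multiplication operator, so it contributes $\sum_\beta\widetilde h^{j\alpha}_{j\beta}(p)\,\widetilde\psi_{j\beta}(p)$. Since $\widetilde\psi_{j\beta}(p)=(\mathcal{E}_q\widetilde\psi)_{(G,j\beta)}$, this gives exactly the term $\delta^j_k\delta^G_{G'}\,\widetilde h^{j\alpha}_{j\beta}(q+G_k+G_l)$.

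\textbf{Interlayer part.} For $k\neq j$ the interlayer part gives
\[
c_j^*c_k^*\sum_{G'_j\in\mathcal{R}_j^*}e^{iG'_j\cdot\tau_{j\alpha}}\,\widehat h^{j\alpha}_{k\beta}(p+G'_j)\,\widetilde\psi_{k\beta}(p+G'_j).
\]
The key step is to express $\widetilde\psi_{k\beta}(p+G'_j)=\widetilde\psi_{k\beta}(q+G'_j+G_k+G_l)$ as an entry of $\mathcal{E}_q\widetilde\psi$ on layer $k$. Those entries are indexed by $\kappa_k^*$, whose $k$-th component vanishes. The $G_k$ shift is therefore removed using the quasi-periodicity of $\mathcal{G}_k$:
\[
\widetilde\psi_{k\beta}(x+G_k)=e^{-iG_k\cdot\tau_{k\beta}}\,\widetilde\psi_{k\beta}(x).
\]
This yields $e^{-iG_k\cdot\tau_{k\beta}}(\mathcal{E}_q\widetilde\psi)_{(G',k\beta)}$ with $G'=(G'_j,0,G_l)\in\kappa_k^*$. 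Relabelling the sum over $G'_j$ as a sum over all $G'\in\kappa_k^*$, subject to the constraint $G'_l=G_l$, produces the factor $\delta^{G_l}_{G'_l}\,T^{(G_k,\beta)}_{(G'_j,\alpha)}\,\widehat h^{j\alpha}_{k\beta}(q+G'_j+G_k+G_l)$, which is the stated off-diagonal entry.

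By \zcref{ass:herm}, $T_{13}=O$, so only adjacent pairs $(j,k)$ occur. Hence $l$ is well defined and the formula covers every nonzero block.

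\textbf{Analytic justification.} I need the rearrangements to be legitimate and $\widehat H_q$ to be well defined on $\mathcal{E}_q\chi^a$.
\begin{itemize}
\item[(i)] Each entry-row is absolutely summable, by the exponential decay of $\widehat h$ in \zcref{ass:bound}. This justifies exchanging sums and passing from the momentum formula to the matrix entries.
\item[(ii)] By \zcref{ass:incom}, the map $G\mapsto q+\sum_t G_t$ is injective on $\kappa_j^*$. Hence $\mathcal{E}_{q}\widetilde\psi$ determines $\widetilde\psi$ on a dense set, and by analyticity determines it everywhere. Consequently the action of $\widehat H_q$ on $\mathcal{E}_q\chi^a$ is unambiguous.
\item[(iii)] Boundedness with respect to the ergodic inner product of \zcref{lem:ergProd} follows from a Schur-type bound. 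The row and column sums of $|[\widehat H_q]|$ are uniformly controlled by $\sup|\widetilde h|$ and $\sum_{G\in\mathcal{R}_j^*}e^{-\gamma_2|p+G|}$, both of which are finite uniformly in $p$.
\end{itemize}

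I expect the main obstacle to be bookkeeping rather than analysis: getting the quasi-periodic phase $e^{-iG_k\cdot\tau_{k\beta}}$ with the correct sign, and matching the index reassignment $(G_k,G_l)\mapsto(G'_j,G_l)$ to the convention for $T_G$ in \zcref{prop:momentum}. I would first check that phase convention on a single Fourier mode before doing the general computation.
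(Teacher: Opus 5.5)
Your proposal is correct and is essentially the paper's proof. Both evaluate $\mathcal{E}_q\widetilde H\widetilde\psi$ at $(G,j\alpha)$, insert the explicit momentum-space formula, and read off the diagonal intralayer term; in the interlayer term both use quasi-periodicity to pull out $e^{-iG_k\cdot\tau_{k\beta}}$ and then reindex over $G'=(G'_j,0,G_l)\in\kappa_k^*$. Your remark (ii) is not needed, since the intertwining is verified directly for the explicit matrix, and its inference is loose: injectivity of $G\mapsto q+G_k+G_l$ does not by itself give density of these points modulo $\mathcal{R}_j^*$, and density is what an identity-theorem argument in two real variables requires.
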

We also define the momentum local density of states (LDoS) as follows.
\begin{definition}
     The \textit{momentum regularized LDoS per layer} $\widehat{\mathcal{D}}_{j\epsilon}:\mathbb{R}\times\mathbb{R}^2\to\mathbb{R}_+$ is defined by
    \begin{equation}
        \widehat{\mathcal{D}}_{j\epsilon}(E, q) = \frac{1}{|\mathcal{A}_j|}\sum_{j\alpha\in\mathcal{A}_j}[\delta_\epsilon(E-\widehat{H}_q)]_{(0,j\alpha)}^{(0,j\alpha)}.
    \end{equation}
    The \textit{averaged momentum regularized LDoS} $\widehat{\mathcal{D}}:\mathbb{R}\times\mathbb{R}^2\to\mathbb{R}_+$ is defined by
    \begin{equation}
        \widehat{\mathcal{D}}_\epsilon(E,q) = \frac{1}{|\mathcal{A}|}\sum_{j\alpha\in\mathcal{A}}[\delta_\epsilon(E-\widehat{H}_q)]_{(0,j\alpha)}^{(0,j\alpha)}.
    \end{equation}
\end{definition}
 We define the thermodynamic limit trace  $\underline{\operatorname{Tr}}:\mathcal{O}(\widehat{H}_q)\to\mathbb{C}$ by
    \begin{equation}
        \underline{\operatorname{Tr}}\; f(\widehat{H}_q) = \lim_{r\to\infty}\frac{1}{|\Omega_r^*|}\sum_{(G,j\alpha)\in\Omega_r^*}[f(\widehat{H}_q)]_{(G,j\alpha)}^{(G,j\alpha)}
    \end{equation}
    for
    \begin{equation}
        \Omega_{r}^* = \left\{(G,j\alpha)\in\Omega^*: \left\|\bigoplus_{k\neq j}G_k\right\|_2<r\right\}.
    \end{equation}
    We define the reciprocal space DoS as
        \begin{equation}
        \widehat{\mathcal{D}}_{\epsilon}(E) = \underline{\operatorname{Tr}}\;\delta_\epsilon(E-\widehat{H}_0).
    \end{equation}

Before we proceed to the equivalence of the different regularized densities of states, we introduce the following computable complex linear functional acting on the collection of reciprocal matrices parameterized by starting momenta $\widehat{H} = \{\widehat{H}(q)\}_{q \in \mathbb{R}^2}$:
\begin{definition}
    \label{defn:functional}
    We define the set of generated operators $\mathcal{A}\left(\widehat{H}\right)$ by
    \begin{equation}
        \mathcal{A}\left(\widehat{H}\right) = \left\{f\left(\widehat{H}\right): f\in\mathcal{O}\left(\widehat{H}\right)\right\}.
    \end{equation}
    The complex linear functional $\mathcal{T}: \mathcal{A}\left(\widehat{H}\right)\to\C$ is defined by
    \begin{equation}
        \mathcal{T}f\left(\widehat{H}\right) = \nu^*\sum_{j\alpha\in\mathcal{A}}\int_{\Gamma_j^*}\left[f\left(\widehat{H}_q\right)\right]_{(0,j\alpha)}^{(0,j\alpha)}dq.
    \end{equation}
\end{definition}
The following theorem lays the foundation for computing the regularized density of states via the complex linear functional.
\begin{theorem}
    \label{thm:equivtrace}
    Under \zcref{ass:bound,ass:incom}, the relation
    \begin{gather}
        \underline{\operatorname{Tr}} \; f(\widehat{H}_0)=\mathcal{T} f(\widehat{H})=\underline{\operatorname{Tr}}\; f(H)
    \end{gather}
    holds for all $f\in\mathcal{O}(\widehat{H})$.
\end{theorem}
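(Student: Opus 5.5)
We prove the two equalities separately, first for polynomials $f$ and then for general $f\in\mathcal{O}(\widehat H)$ by approximation. Since $H$, $\widehat H_q$ and $\widetilde H$ are uniformly bounded (by \zcref{ass:bound}), their spectra all lie in one fixed compact interval $I\subset\R$. The density of polynomials in analytic functions on a neighborhood of $I$ (Runge, or Chebyshev truncation on a Bernstein ellipse) then suffices. All three functionals are bounded by $\sup_I|f|$ up to a constant: each is an average of diagonal entries, and diagonal entries satisfy $|[f(A)]_{x}^{x}|\le\|f(A)\|$. So it is enough to prove the identities for monomials $f(z)=z^n$.

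\textbf{Step 1: $\underline{\operatorname{Tr}}\,H^n=\mathcal{T}\widehat H^n$.}
\begin{enumerate}
\item Fix a layer $j$ and orbital $j\alpha$. Expand $[H^n]_{(R,j\alpha)}^{(R,j\alpha)}$ as a sum over hopping paths through the layers. Each interlayer hop is written through its Fourier transform, $h(x)=\int\hat h(\xi)e^{i\xi\cdot x}d\xi$.
\item The sums over intermediate lattice sites in layer $k$ are evaluated by Poisson summation, $\sum_{R'\in\mathcal R_k}e^{i\xi\cdot R'}=|\Gamma_k|^{-1}\ldots\sum_{G\in\mathcal R_k^*}\delta(\xi-G)$. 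This reproduces exactly the products of reciprocal hoppings in \eqref{eq:reciprocal}, so the diagonal entry becomes a sum over reciprocal paths $G\in\bigoplus_t\mathcal R^*_t$.
\item These paths are integrated against a phase $e^{i(\sum_t G_t)\cdot R}$ that depends on the starting site $R$.
\item Averaging over $R\in\Omega_r$ and letting $r\to\infty$, the phase averages to $\delta_{\sum G_t,0}$. By \zcref{ass:incom} on the reciprocal lattices, a closed path must satisfy $G_t=0$ for every $t$. Note that only two of the components are nonzero in $\kappa_j^*$, and with $T_{13}=0$ together with $2$-incommensurability, the surviving terms are exactly the closed loops from $(0,j\alpha)$ to itself.
\item What remains is $\nu^*\int_{\Gamma_j^*}[\widehat H_q^n]_{(0,j\alpha)}^{(0,j\alpha)}dq$. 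The normalization $\nu^*$ (together with the layer density factors $|\Omega_r|$) is fixed by checking the case $n=0$.
\end{enumerate}
Absolute convergence, which justifies exchanging the limit with the path sums, follows from the exponential decay of $h$ and $\hat h$ in \zcref{ass:bound}.

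\textbf{Step 2: $\underline{\operatorname{Tr}}\,\widehat H_0^n=\mathcal{T}\widehat H^n$.} This is the ergodic step.
\begin{enumerate}
\item Use the covariance of the reciprocal Hamiltonian: by \eqref{eq:reciprocal}, $[\widehat H_0^n]_{(G,j\alpha)}^{(G,j\alpha)}=[\widehat H_{q}^n]_{(0,j\alpha)}^{(0,j\alpha)}$ with $q=\sum_t G_t$. This holds because $\widehat H_q$ depends on $G$ only through $q+\sum G_t$, and the phases in $T$ cancel around closed loops.
\item The thermodynamic trace therefore becomes an average of the continuous, $\mathcal R_j^*$-periodic function $g_j(q)=[\widehat H^n_q]_{(0,j\alpha)}^{(0,j\alpha)}$ over the points $G_k+G_l$, with $G_k\in\mathcal R_k^*$, $G_l\in\mathcal R_l^*$, $\|(G_k,G_l)\|<r$.
\item By \zcref{ass:incom} the set $\mathcal R_k^*+\mathcal R_l^*$ is equidistributed modulo $\mathcal R_j^*$ (Weyl's criterion). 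The Fourier modes of $g_j$ are $e^{iR\cdot q}$ with $R\in\mathcal R_j$, and these give vanishing averages unless $R=0$. This is the same argument as in \zcref{lem:ergProd}.
\item Hence the average equals $|\Gamma_j^*|^{-1}\int_{\Gamma_j^*}g_j$, and summing over $j\alpha$ with the correct weights gives $\mathcal{T}$.
\end{enumerate}

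\textbf{Main obstacle.} The main obstacle is Step 1: making the interchange of the thermodynamic limit with the infinite path and Poisson sums rigorous. Equivalently, one must verify that the Weyl-type equidistribution applies with the uniformity needed for $g_j$. The plan is to handle this by truncating paths at hop length $L$ with exponentially small tails, which is uniform in $r$ by \zcref{ass:bound}, and applying equidistribution to finitely many trigonometric terms.
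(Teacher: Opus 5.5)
\textbf{Verdict.} Your argument is correct in outline but takes a genuinely different route from the paper's proof.

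\textbf{How the paper argues.} It never reduces to polynomials. It writes $f(H)$ and $f(\widehat H_q)$ as Cauchy integrals of resolvents and moves $r\to\infty$ inside the contour by dominated convergence. It then expresses $[R_z(H)]_{(R,j\alpha)}^{(R,j\alpha)}$ as a sum over $G\in\kappa_j^*$ of $R$-dependent phases times $q$-averages of $[R_z(\widehat H_q)]_{(0,j\alpha)}^{(G,j\alpha)}$. To get this it uses the Bloch transform, the ergodic inner product of \zcref{lem:ergProd} at $q=0$, and the intertwining $\mathcal{E}_qR_z(\widetilde H)=R_z(\widehat H_q)\mathcal{E}_q$. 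Finally it cites the ergodic theorem of Canc\`es et al.\ to turn $G$-averages back into $q$-integrals; the equality with $\underline{\operatorname{Tr}}\,f(\widehat H_0)$ is that same cited theorem.

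\textbf{How your route differs.} You derive the analogous formula for $H^n$ directly by Poisson summation, which amounts to applying $\widetilde H^n$ to a plane wave and reading off $\widehat H_q^n$. You also prove the ergodic step yourself, via covariance and Weyl's criterion.

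\textbf{What your route buys.}
\begin{itemize}
\item For monomials, exchanging $r\to\infty$ with $\sum_{G\in\kappa_j^*}$ is elementary. The uniform row- and column-sum bounds on $\widehat H_q$ give $\sum_G|[\widehat H_q^n]_{(0,j\alpha)}^{(G,j\alpha)}|\le C^n$ uniformly in $q$, so dominated convergence applies term by term. Your ``main obstacle'' therefore needs no hop-length truncation.
\item For resolvents the same exchange needs off-diagonal decay of $R_z(\widehat H_q)$ of Combes--Thomas type (cf.\ \zcref{lem:resolventDecay}). The paper does not invoke such a bound at that point.
\item Your route makes explicit which half of \zcref{ass:incom} does what. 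The reciprocal half kills the phases in Step 1; the real-space half ($\mathcal{R}_j\cap\mathcal{R}_k=\{0\}$, so every nonzero $R\in\mathcal{R}_j$ gives a nontrivial character on $\mathcal{R}_k^*\oplus\mathcal{R}_l^*$) gives equidistribution in Step 2.
\end{itemize}

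\textbf{What it costs.} The polynomial reduction uses self-adjointness: real spectra and $\|f(A)-p(A)\|\le\sup_{\sigma(A)}|f-p|$ uniformly in $q$. That is \zcref{ass:herm}, which the theorem does not list, though the paper imposes it throughout. The contour argument handles any $f\in\mathcal{O}(\widehat H)$ directly, and it is the form the paper reuses in its later error estimates.

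\textbf{Small corrections.}
\begin{itemize}
\item The $R$-average of the phase is the indicator of $G_k+G_l\in\mathcal{R}_j^*$, not $\delta_{\sum_tG_t,0}$; the accumulated layer-$j$ component is never forced to vanish. Reciprocal $2$-incommensurability is what reduces this indicator to $G_k=G_l=0$.
\item The $n=0$ check fixes only one scalar. You still need the layerwise identity $\lim_r|\mathcal{R}_{jr}|/|\Omega_r|=\lim_r|\kappa_{jr}^*|/|\Omega_r^*|=\nu^*|\Gamma_j^*|$, which holds because $|\Gamma_j^*|=(2\pi)^2/|\Gamma_j|$.
\item The hypothesis $T_{13}=O$ plays no role in the argument.
\end{itemize}
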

We immediately obtain the following corollary, where the conjugate symmetry of $\delta_\epsilon$ and the hermiticity of $\widehat{H}$ yield only real values.
\begin{corollary}
    \label{cor:trtofun}
    Under \zcref{ass:bound,ass:incom}, the relation
    \begin{equation}
        \widehat{\mathcal{D}}_\epsilon(E) = \mathcal{T}\delta_\epsilon(E-\widehat{H}) = \mathcal{D}_{\epsilon}(E)
    \end{equation}
    holds for all $E\in\mathbb{R}$.
\end{corollary}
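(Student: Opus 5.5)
The corollary follows by specializing \zcref{thm:equivtrace} to a Gaussian and then checking that the resulting numbers are real. For fixed $E\in\mathbb{R}$ and $\epsilon>0$, set $f_E(z) = \delta_\epsilon(E-z) = \frac{1}{\epsilon\sqrt{2\pi}}e^{-(E-z)^2/(2\epsilon^2)}$. This is an entire function of $z$, so it is analytic on any open neighborhood of $\sigma(\widehat{H}_q)$ for every $q$, and on any neighborhood of $\sigma(H)$. Hence $f_E\in\mathcal{O}(\widehat{H})$ and $f_E\in\mathcal{O}(H)$. The holomorphic functional calculus defines $f_E(\widehat{H}_q)$ and $f_E(H)$, and these agree with $\delta_\epsilon(E-\widehat{H}_q)$ and $\delta_\epsilon(E-H)$ as used in the definitions of $\widehat{\mathcal{D}}_\epsilon$ and $\mathcal{D}_\epsilon$.

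Applying \zcref{thm:equivtrace} to $f_E$ gives
\begin{equation*}
\underline{\operatorname{Tr}}\,\delta_\epsilon(E-\widehat{H}_0)=\mathcal{T}\delta_\epsilon(E-\widehat{H})=\underline{\operatorname{Tr}}\,\delta_\epsilon(E-H).
\end{equation*}
The left side is $\widehat{\mathcal{D}}_\epsilon(E)$ and the right side is $\mathcal{D}_\epsilon(E)$, by their definitions. This proves the chain of equalities.

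It remains to show the common value is real. By \zcref{ass:herm}, $H$ is self-adjoint. The intertwining relations of \zcref{prop:momentum} and \zcref{prop:reciprocal} carry this to $\widehat{H}_q$: one can also see it directly from \eqref{eq:reciprocal}, since the hermiticity of the hopping functions and the form of the phases $T$ give $\overline{[\widehat{H}_q]^{(G,j\alpha)}_{(G',k\beta)}}=[\widehat{H}_q]^{(G',k\beta)}_{(G,j\alpha)}$. The function $f_E$ satisfies $\overline{f_E(\bar z)}=f_E(z)$ and is real on $\mathbb{R}$. Therefore, for a self-adjoint $A$, the operator $f_E(A)$ is self-adjoint, and in fact positive.

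It follows that each diagonal entry $[f_E(H)]^{(R,j\alpha)}_{(R,j\alpha)}$ and $[f_E(\widehat{H}_q)]^{(G,j\alpha)}_{(G,j\alpha)}$ is real and nonnegative. The finite averages defining $\underline{\operatorname{Tr}}$ are then real, and their limits, which exist by \zcref{thm:equivtrace}, are real as well.

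The only point that needs any care is confirming that $\widehat{H}_q$ is self-adjoint with respect to the ergodic inner product of \zcref{lem:ergProd}, not merely entrywise Hermitian. I would deduce this from \zcref{lem:ergProd}, which states that $\mathcal{E}_{jq}$ preserves inner products up to the factor $|\Gamma_j^*|^{-1}$, together with the self-adjointness of $\widetilde{H}$ inherited from $H$ through the unitary $\mathcal{G}$.
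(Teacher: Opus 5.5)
Your proposal is correct and follows the paper's route: the corollary comes immediately from applying \zcref{thm:equivtrace} to the entire function $z\mapsto\delta_\epsilon(E-z)$, with realness following from the hermiticity of $\widehat{H}$ and the conjugate symmetry of $\delta_\epsilon$. The concern in your last paragraph is unnecessary. The diagonal entries $[f(\widehat{H}_q)]_{(0,j\alpha)}^{(0,j\alpha)}$ are matrix elements on $\ell^2(\Omega^*)$, where the entrywise hermiticity you already checked, together with boundedness from the decay in \zcref{ass:bound}, gives self-adjointness; self-adjointness for the ergodic inner product would say nothing about a single entry, since finitely supported vectors such as $e_{(0,j\alpha)}$ have zero ergodic norm.
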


\section{The Algorithm}
\label{sec:algorithm}

In this section, we outline the algorithm for computing the density of states near the Fermi energy. We note that the algorithm to obtain the momentum LDoS is identical, except no integral discretization is required, as the momentum LDoS is simply the integrand in the definition of $\widehat{\mathcal{D}}_\epsilon(E)$. We introduce the algorithm through a sequence of approximations, and for each piece, we quantify the error.
\subsection{Integral discretization}
We first discretize the linear functional $\mathcal{T}$. Define the uniform discretization $\mathfrak{S}(N)$ of the layer $j$ Brillouin zone $\Gamma_j^*$ by
\begin{equation}
    \mathfrak{S}_j(N) = \frac{|\Gamma_j^*|}{N^2}B_j\left([0,N-1)^2\cap\mathbb{Z}^2\right).
\end{equation}
Then the uniform discretization $\mathcal{T}_{N}$ of the linear functional $\mathcal{T}:\mathcal{O}(\widehat{H})\to\mathbb{R}$ is given by
\begin{equation}
    \mathcal{T}_{N} f(\widehat{H}) = \frac{\nu^*}{N^2}\sum_{j=1}^3|\Gamma_j^*|\sum_{\substack{j\alpha\in\mathcal{A}_j\\q\in\mathfrak{S}_j(N)}}[f(\widehat{H}_q)]_{(0,j\alpha)}^{(0,j\alpha)}.
\end{equation}
Naturally, this leads to the following lemma regarding the relative error of the uniform discretization.
\begin{lemma}
    \label{lem:errorN}
    Under \zcref{ass:bound}, the following bound holds
    \begin{equation}
        |\mathcal{T} f(\widehat{H})-\mathcal{T}_{N} f(\widehat{H})| \lesssim \sup_{z\in \mathcal{C}}|f(z)|e^{-\widetilde{\rho} N}
    \end{equation}
    for all $f\in\mathcal{O}(\widehat{H})$, and
    \begin{equation}
        \widetilde{\rho} \leq \min\left(\frac{\gamma_1}{2},\frac{\epsilon}{C}\right)\min_{j\in\{1,2,3\}}\|A_j^{-T}\|_2^{-1}
    \end{equation}
    where $\epsilon$ is the distance from the contour $\mathcal{C}$ to the spectrum of $\widehat{H}$ and $C\in\mathbb{R}_+$ is some computable constant.
\end{lemma}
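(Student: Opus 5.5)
The plan is to treat the integrand $F_{j\alpha}(q) := [f(\widehat{H}_q)]_{(0,j\alpha)}^{(0,j\alpha)}$ as a periodic function on $\Gamma_j^*$ and to use the classical exponential convergence of the trapezoidal rule for periodic functions that are analytic in a strip. Each piece needs its own justification.

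\textbf{Step 1 (periodicity).} We first show that $q\mapsto F_{j\alpha}(q)$ is $\mathcal{R}_j^*$-periodic. For $G_j\in\mathcal{R}_j^*$, the reciprocal Hamiltonian $\widehat{H}_{q+G_j}$ is unitarily equivalent to $\widehat{H}_q$. The equivalence is the shift $G\mapsto G+G_j$ on the blocks $\Omega_k^*$ with $k\neq j$, combined with a diagonal phase $e^{iG_j\cdot\tau}$ that absorbs the factors $T$ in \eqref{eq:reciprocal}. On $\Omega_j^*$ this operation fixes the index $(0,j\alpha)$ up to a unimodular phase, which cancels in the diagonal entry. Hence $F_{j\alpha}(q+G_j)=F_{j\alpha}(q)$, so $\mathcal{T}$ is an integral of a periodic function over a period cell and $\mathcal{T}_N$ is exactly the corresponding $N\times N$ trapezoidal rule in the coordinates $q=B_j s$ with $s\in[0,1)^2$.

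\textbf{Step 2 (analytic extension).} Next we extend $q$ to $q+i\eta$ with $\eta\in\R^2$. By \zcref{ass:bound}, $\widetilde{h}_{j\beta}^{j\alpha}$ is analytic in the strip $\|\operatorname{Im} q\|<\gamma_1$. The interlayer term is the Fourier transform $\hat h$; it decays like $e^{-\gamma_2\|\xi\|}$ and is also entire-in-strip, because $h$ decays like $e^{-\gamma_1\|x\|}$ (the exponential spatial decay gives analyticity of $\hat h$ in $\|\operatorname{Im}\xi\|<\gamma_1$). Using the decay in $\xi$ to keep the infinite sums in the rows and columns of $\widehat{H}_{q+i\eta}$ uniformly summable, we get a bounded operator that is analytic in $q+i\eta$ for $\|\eta\|\le\gamma_1/2$. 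Moreover,
\[
\|\widehat{H}_{q+i\eta}-\widehat{H}_q\|\le C\|\eta\|,
\]
with $C$ computable from the constants in \zcref{ass:bound} (Schur test on the derivative).

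\textbf{Step 3 (resolvent control).} This is the main obstacle: controlling $f(\widehat{H}_{q+i\eta})$ via
\[
f(\widehat{H}_{q+i\eta})=\frac{1}{2\pi i}\oint_{\mathcal{C}}f(z)\,(z-\widehat{H}_{q+i\eta})^{-1}\,dz .
\]
Since $\widehat{H}_q$ is self-adjoint with $\operatorname{dist}(\mathcal{C},\sigma(\widehat{H}_q))\ge\epsilon$, a Neumann series argument shows the resolvent stays bounded by $2/\epsilon$ whenever $C\|\eta\|\le\epsilon/2$. The delicate point is to make $C$ uniform in $q$ and to handle the non-normal perturbation, which requires the Schur bound from Step 2. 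This yields
\[
|F_{j\alpha}(q+i\eta)|\lesssim \sup_{\mathcal{C}}|f|
\]
for all $\|\eta\|\le\eta_0:=\min(\gamma_1/2,\epsilon/C)$.

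\textbf{Step 4 (trapezoidal error).} In the $s$ variables the strip width becomes $\eta_0\|B_j^{-1}\|^{-1}$; up to the factor $2\pi$ this is $\eta_0\|A_j^{-T}\|_2^{-1}$. Expanding $F_{j\alpha}$ in Fourier series, the trapezoidal error equals the sum of the aliased coefficients $\hat F(Nm)$ over $m\ne0$. Shifting the contour gives $|\hat F(m)|\lesssim\sup|F|\,e^{-2\pi\rho'\|m\|}$, and summing over $m\neq0$ yields $e^{-\widetilde{\rho} N}$ with
\[
\widetilde{\rho}\le \min\left(\frac{\gamma_1}{2},\frac{\epsilon}{C}\right)\min_j\|A_j^{-T}\|_2^{-1}.
\]
Summing over $j\alpha$ with the weights $\nu^*|\Gamma_j^*|$ completes the bound.
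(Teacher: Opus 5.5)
Your argument is essentially the paper's proof. It uses the same three ingredients: $\mathcal{R}_j^*$-periodicity via the shift-plus-phase unitary conjugation; analytic continuation through a Neumann series with $\|\widehat{H}_{q+i\eta}-\widehat{H}_q\|_2\le C\|\eta\|_2$ obtained from $\ell^1/\ell^\infty$ (Schur, Riesz--Thorin) bounds; and exponential convergence of the periodic trapezoidal rule. The only real difference is that you apply the two-dimensional aliasing formula directly, where the paper splits the error into two one-dimensional errors and cites Kress's Theorem 9.28.

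Two small corrections. First, the strip width in the $s$-variables is $\eta_0/\|B_j\|_2$, since you need $\|B_j\operatorname{Im}s\|_2<\eta_0$; it is not $\eta_0\|B_j^{-1}\|_2^{-1}$. Second, the uniform summability in Step 2 needs exponential decay of $\hat h(\xi+i\eta)$ in $\operatorname{Re}\xi$, and the assumption only gives decay on the real axis. The complex-strip decay does follow, at a reduced rate, from a Phragm\'en--Lindel\"of interpolation between that real-axis decay and the boundedness of $\hat h$ in the strip. The paper uses the same bound without comment.
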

In particular, for $f = \delta_\epsilon$, we obtain the bound
\begin{equation}
    |\widehat{\mathcal{D}}_\epsilon(E)-\widehat{\mathcal{D}}_\epsilon^{N}(E)|\lesssim \epsilon^{-1}e^{-\widetilde{\rho}N}
\end{equation}
where we denote by $\widehat{\mathcal{D}}_\epsilon^{N}(E)$ the discretized regularized density of states, that is,
\begin{equation}
    \widehat{\mathcal{D}}_\epsilon^{N}(E) = \mathcal{T}_{N} \delta_\epsilon(E-\widehat{H}).
\end{equation}

\subsection{Hopping function truncation}
In order to obtain a a sparse matrix, we restrict the hopping distance to be within a ball of radius $\tau$. Let
\begin{equation}
    g_{\tau}(x) = 
    \begin{cases}
            1, & \|x\|_2\leq\tau-\delta,\\
            \frac{e^{-\frac{\delta}{\tau-\|x\|_2}}}{e^{-\frac{\delta}{\tau-\|x\|_2}}+e^{-\frac{\delta}{\|x\|_2-(\tau-\delta)}}}, & \tau-\delta<\|x\|_2<\tau,\\
            0, & \|x\|_2\geq\tau
    \end{cases}
\end{equation}
for some fixed width $\delta$. Clearly, $g_{\tau}:\R^2\to\R$ is a smoothly truncated, compactly supported bump function. Define the associated truncation $\widehat{H}^\tau_q$ by
\begin{gather}
    [\widehat{H}_q^{\tau}]_{(G',k\beta)}^{(G,j\alpha)}:=\delta_k^j\delta_{G'}^G\sum_{R\in\mathcal{R}_{j\tau}}e^{-i(q+G_k+G_l)\cdot (R+\tau_{j\alpha}-\tau_{j\beta})}(h_{j\beta}^{j\alpha})_R\\
    +(1-\delta_k^j)\delta_{G_l'}^{G_l}T_{(G_j',\alpha)}^{(G_k,\beta)}\widehat{h}_{k\beta}^{j\alpha}(q+G_j'+G_k+G_l)g_\tau(q+G_j'+G_k+G_l)
\end{gather}
for all $q\in\mathbb{R}^2$. We denote by $\widehat{\mathcal{D}}_\epsilon^{(N,\tau)}(E)$ the discretized regularized density of states with respect to $\widehat{H}_q^\tau$, that is,
\begin{equation}
    \widehat{\mathcal{D}}_\epsilon^{(N,\tau)}(E) = \mathcal{T}_{N}\delta_\epsilon(E-\widehat{H}_q^\tau).
\end{equation}
By directly appealing to the Estimation Lemma, we obtain the following result on the relative error introduced by $\tau$-truncation.

\begin{lemma}
    For double incommensurate TTG, the relative error of the regularized density of states introduced by $\tau$-truncation satisfies
    \begin{equation}
        |\widehat{\mathcal{D}}_\epsilon^{N}(E)-\widehat{\mathcal{D}}_\epsilon^{(N,\tau)}(E)|\lesssim\epsilon^{-1}e^{-(\tau-\delta)\min(\gamma_1,\gamma_2)}.
    \end{equation}
\end{lemma}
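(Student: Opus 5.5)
We write $\delta_\epsilon(E-\cdot)$ as a contour integral and apply the Estimation Lemma to the difference of resolvents. Since $\delta_\epsilon$ is entire, for each $q$ we pick a contour $\mathcal{C}$ enclosing the spectra of both $\widehat{H}_q$ and $\widehat{H}_q^\tau$ at distance comparable to $\epsilon$, for instance the boundary of a strip $|\operatorname{Im} z|\le c\,\epsilon$ around a bounded real interval containing both spectra. On such a contour $\sup_{z\in\mathcal{C}}|\delta_\epsilon(E-z)|\lesssim \epsilon^{-1}$ (using $|\operatorname{Im}z|\lesssim\epsilon$), and the contour length is $O(1)$, because the spectra lie in a fixed interval by the uniform operator-norm bounds implied by \zcref{ass:bound}. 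Then
\begin{equation*}
\delta_\epsilon(E-\widehat{H}_q)-\delta_\epsilon(E-\widehat{H}_q^\tau)=\frac{1}{2\pi i}\oint_{\mathcal{C}}\delta_\epsilon(E-z)\,(z-\widehat{H}_q)^{-1}(\widehat{H}_q-\widehat{H}_q^\tau)(z-\widehat{H}_q^\tau)^{-1}\,dz,
\end{equation*}
by the second resolvent identity. The resolvent norms are bounded by the inverse distance to the spectrum, so $\|(z-\widehat{H}_q)^{-1}\|,\|(z-\widehat{H}_q^\tau)^{-1}\|\lesssim \epsilon^{-1}$ on $\mathcal{C}$. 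If instead we only want the stated $\epsilon^{-1}$ factor overall, we take $\mathcal{C}$ at fixed distance $O(1)$ from the real axis. Then the resolvents are $O(1)$ and the $\epsilon$-dependence comes only from $\sup_{\mathcal{C}}|\delta_\epsilon(E-z)|$. I would choose the contour so that the final bound reads $\epsilon^{-1}$, which requires balancing the Gaussian's growth $e^{(\operatorname{Im}z)^2/2\epsilon^2}$ against the distance to the spectrum; taking $|\operatorname{Im}z|\sim\epsilon$ is the natural choice.

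The key step is bounding $\|\widehat{H}_q-\widehat{H}_q^\tau\|$ uniformly in $q$ by $e^{-(\tau-\delta)\min(\gamma_1,\gamma_2)}$. There are two kinds of entries. The intralayer diagonal entries differ by the tail $\sum_{\|R\|>\tau}e^{-i(\cdot)\cdot(\cdots)}(h_{j\beta}^{j\alpha})_R$. This is bounded by $\sum_{\|R\|>\tau}e^{-\gamma_1\|R\|}\lesssim e^{-\gamma_1\tau}$ (up to a polynomial factor absorbed by slightly reducing the rate), using the $\ell^\infty_\gamma$ bound of \zcref{ass:bound}. The interlayer entries differ by $T\,\widehat{h}(\xi)(1-g_\tau(\xi))$ with $\xi=q+G_j'+G_k+G_l$. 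Since $1-g_\tau$ vanishes for $\|\xi\|\le\tau-\delta$ and $|T|$ is a bounded constant, each such entry is at most $C e^{-\gamma_2(\tau-\delta)}$, with decay $e^{-\gamma_2\|\xi\|}$ beyond.

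To turn entrywise bounds into an operator norm bound, I would use the Schur test: bound the maximal row and column $\ell^1$ sums of the difference matrix. For fixed $(G,j\alpha)$, the nonzero entries in a row are indexed by the single free lattice vector $G_j'$ (the others are pinned by the Kronecker deltas). So the row sum is $\sum_{G_j'\in\mathcal{R}_j^*}|\widehat{h}(\xi)|(1-g_\tau(\xi))\lesssim\sum_{\|\xi\|\ge\tau-\delta}e^{-\gamma_2\|\xi\|}\lesssim e^{-\gamma_2(\tau-\delta)}$, uniformly in $q$ and $G$, again modulo polynomial factors. Combining the two estimates gives $\|\widehat{H}_q-\widehat{H}_q^\tau\|\lesssim e^{-(\tau-\delta)\min(\gamma_1,\gamma_2)}$.

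With this in hand, each diagonal entry $[\cdot]_{(0,j\alpha)}^{(0,j\alpha)}$ of the difference of $\delta_\epsilon$'s is bounded by the operator norm. Summing over the finitely many orbitals and averaging over the quadrature points in $\mathcal{T}_N$ preserves the bound, since the weights $\nu^*|\Gamma_j^*|/N^2$ sum to a fixed constant. This yields the claim. I expect the main obstacle to be the contour bookkeeping: getting exactly $\epsilon^{-1}$ rather than $\epsilon^{-3}$ requires care in choosing $\mathcal{C}$, and absorbing the polynomial prefactors of the lattice tail sums may require slightly shrinking the rate.
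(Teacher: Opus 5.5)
Your route (Cauchy integral for the entire function $\delta_\epsilon$, the second resolvent identity, and a Schur-test bound on $\|\widehat{H}_q-\widehat{H}_q^\tau\|$) is the ``direct appeal to the Estimation Lemma'' the paper cites; the paper omits the details. Your bound on the Hamiltonian difference is fine. Each interlayer row sum runs over the single free vector $G_j'$, and the intralayer tail is controlled by the exponential decay in \zcref{ass:bound}. The lattice tail sums do leave a factor of order $\tau$. Strictly, that factor cannot be absorbed into $\lesssim$ uniformly in $\tau$: you must either keep it or lower the rate slightly.

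The genuine gap is the power of $\epsilon$. With $|\operatorname{Im}z|\sim\epsilon$, your estimate gives $\epsilon^{-1}\cdot\epsilon^{-2}$, i.e.\ $\epsilon^{-3}e^{-(\tau-\delta)\min(\gamma_1,\gamma_2)}$.

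The fix you float does not work. On $|\operatorname{Im}z|=d$, $|\delta_\epsilon(E-z)|$ reaches $(\epsilon\sqrt{2\pi})^{-1}e^{d^2/(2\epsilon^2)}$, so a contour at distance $O(1)$ costs a factor $e^{c\epsilon^{-2}}$, not $O(1)$.

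No choice of contour reaches $\epsilon^{-1}$. Sharper bookkeeping helps only partly: use $\int_{\mathbb{R}}|\delta_\epsilon(E-x-id)|\,dx=e^{d^2/(2\epsilon^2)}$ instead of sup times length, with $d=\sqrt{2}\,\epsilon$. Alternatively, write $\delta_\epsilon(E-H)=\frac{1}{2\pi}\int_{\mathbb{R}}e^{-\epsilon^2t^2/2}e^{it(E-H)}\,dt$ and apply Duhamel's bound $\|e^{-itA}-e^{-itB}\|\le|t|\,\|A-B\|$. Either way, the best you get is $\frac{1}{\pi\epsilon^{2}}\|\widehat{H}_q-\widehat{H}_q^\tau\|$.

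$\epsilon^{-2}$ is sharp for any argument that uses only the operator-norm size of the perturbation. Take $A=aI$ and $B=(a+\eta)I$. Every diagonal entry, and hence every $q$-average in $\mathcal{T}_N$, differs by $\delta_\epsilon(E-a)-\delta_\epsilon(E-a-\eta)\approx\eta\,\delta_\epsilon'(E-a)$, which has size $\epsilon^{-2}\eta$ when $|E-a|\sim\epsilon$.

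So your proposal proves the lemma with $\epsilon^{-3}$ as written, or $\epsilon^{-2}$ with the refinement, in place of $\epsilon^{-1}$. Reaching $\epsilon^{-1}$ would need additional structure, such as regularity of the density of states after averaging in $q$, which neither you nor the paper supplies. The paper's one-line justification has the same defect. Its own bookkeeping in the proof of \zcref{lem:errorW} ($\epsilon^{-2}$ from two resolvents times $\epsilon^{-1}$ from the Gaussian) would give $\epsilon^{-3}$ here as well. The stated $\epsilon^{-1}$ therefore does not follow from the Estimation Lemma argument, whether yours or the paper's.
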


\subsection{$W$-truncation: removing momenta far from $K$-point.}

In order to obtain a finite matrix, we must restrict the reciprocal degrees of freedom. This is done in two stages. For the first stage, we introduce the $W$-truncation. For a generic set $X$, we denote by $\mathcal{P}(X)$ the power set of $X$, that is,
\begin{equation}
    \mathcal{P}(X) = \{Y:Y\subset X\}.
\end{equation}
Let $W\in\mathbb{R}_+^3$ be a vector of $W$-truncation parameters. The layer $W$-truncation function $\mathcal{W}_j^*:\Gamma_j^*\times\mathbb{R}_+\to\mathcal{P}(\kappa_j^*)$ restricts the reciprocal lattice vector pairs of layer $j$ based on the shifted momentum-radius $W_j$:
\begin{equation}
    \mathcal{W}_{j}^*(q,W_j)=\{G\in\kappa_j^*:\|q-\widetilde{K}+(B_k-B_j)B_k^{-1}G_k+(B_l-B_j)B_l^{-1}G_l\|_2<W_j\}\times\mathcal{A}_j
\end{equation}
where $q\in B_{W_j}(K_j)\cup B_{W_j}(K_j')$ and $\widetilde{K}$ is the $K$-point closest to $q$, where we assume $W_j < \omega_j := \frac{1}{2}|K_j-K_j'|$. Let $\omega = \min_j\omega_j$. The total $W$-truncation function $\mathcal{W}^*:\bigcup_{j=1}^3\Gamma_j^*\times[0,\omega)^3\to\mathcal{P}(\kappa^*)$ is defined as the union of the layer $W$-truncation functions:
\begin{equation}
    \mathcal{W}^*(q, W) = \bigcup_{j=1}^3\mathcal{W}_j^*(q, W_j).
\end{equation}
We include the following figure, which illustrates how increasing $W$ results in movement along the $z$-axis of the Dirac cones.

\begin{figure}[ht]
    \centering
    \begin{subfigure}[b]{0.45\textwidth}
        \centering
        \includegraphics[width = \textwidth, alt = {A side view of the three dirac cones as showing that an increase of $W$ moves up the $z$-axis.}]{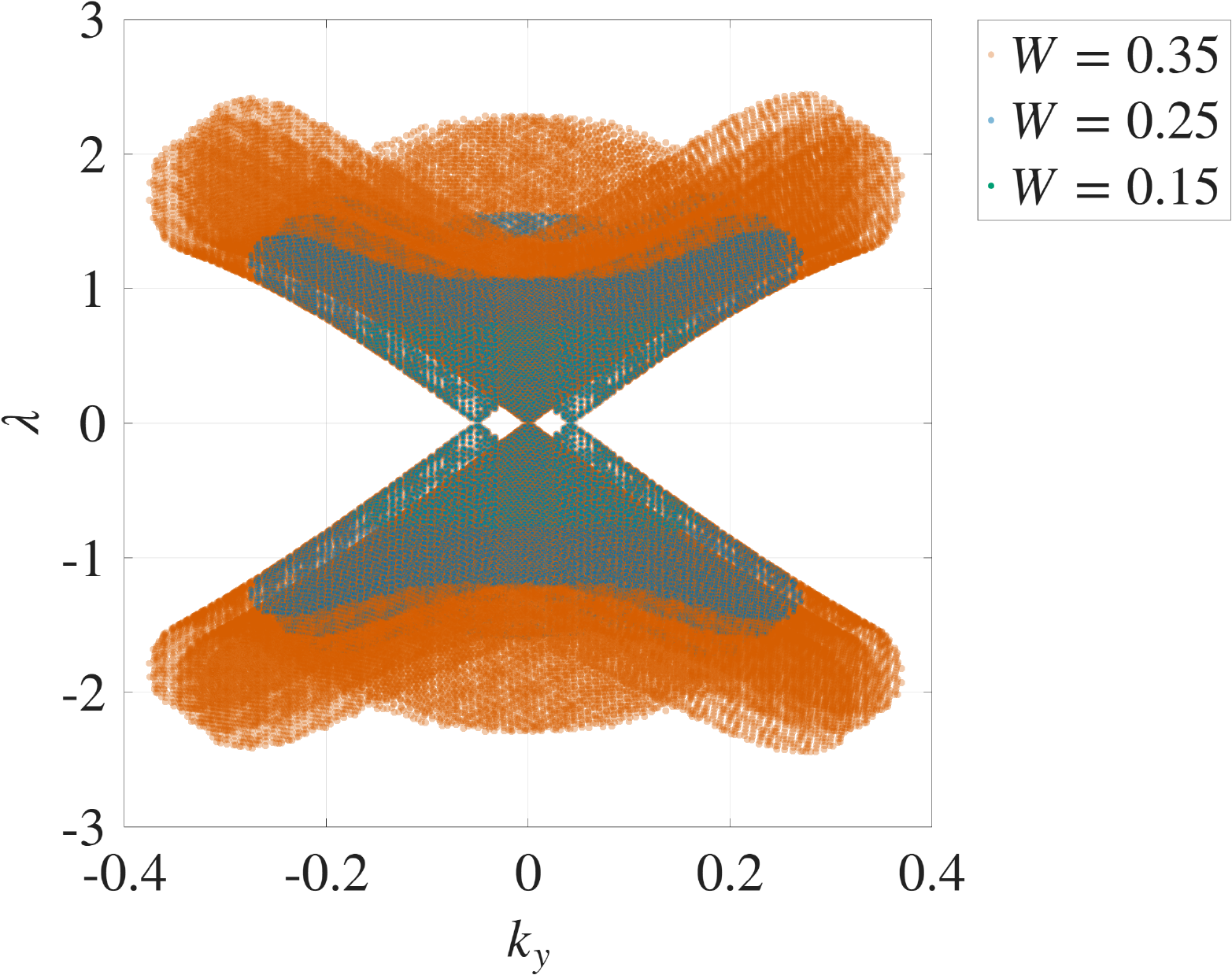}
        \caption{Side-view}
    \end{subfigure}
    \hfill
    \begin{subfigure}[b]{0.45\textwidth}
        \centering
        \includegraphics[width = \textwidth, alt = {A top-down view of the three dirac cones provided to better see the change in $W$.}]{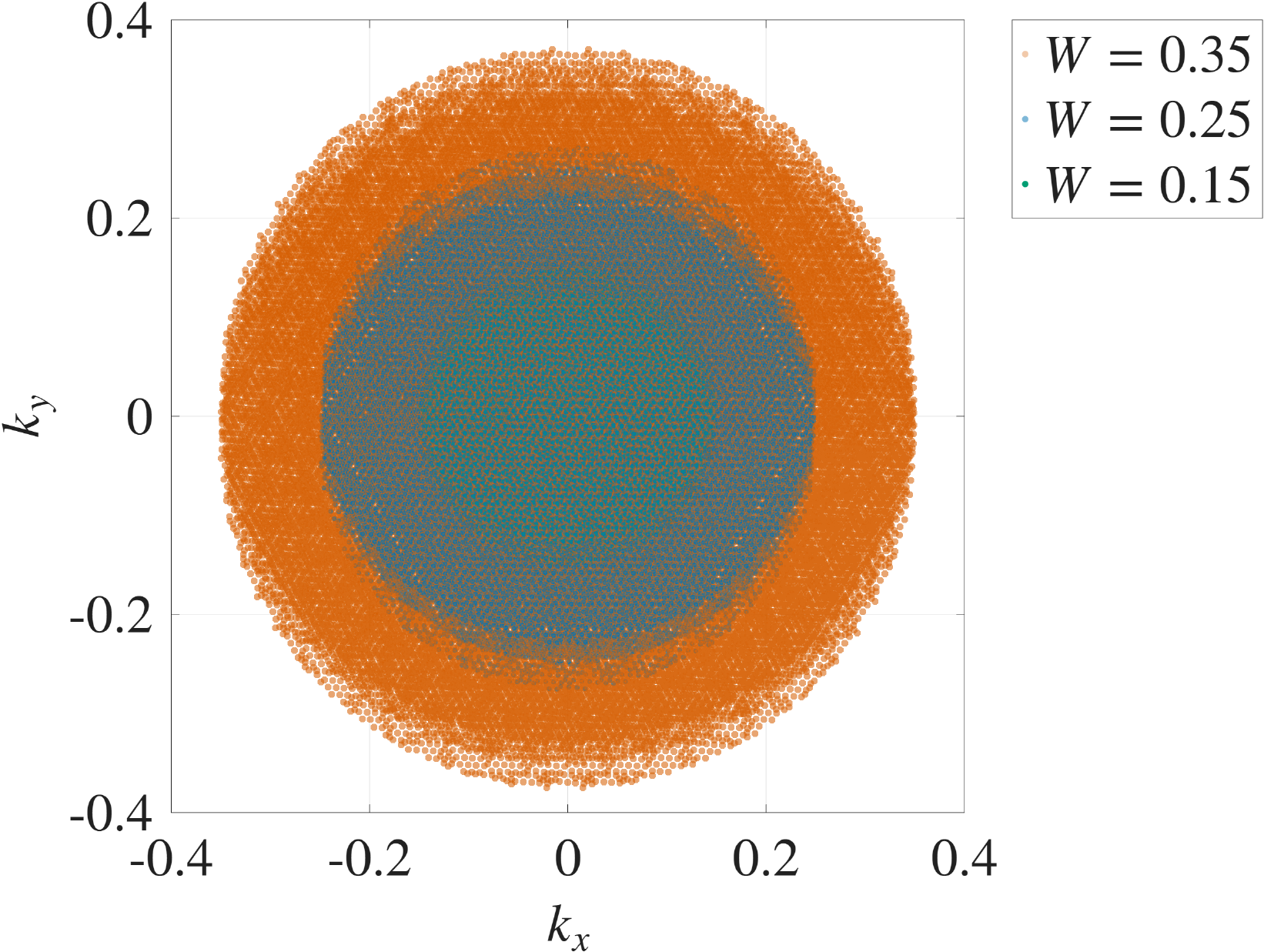}
        \caption{Top-Down}
    \end{subfigure}
    \caption[Increasing $W$ moves up the $z$-axis of the Dirac cones]{Fixing $L = 20$ and $\theta = \left[-\frac{\sqrt{3}}{2}, 0, \frac{\sqrt{2}}{2}\right]$, increasing $W$ results in movement along the $z$-axis of the monolayer Dirac cones. In (a) we give a side-view of all three monolayer Dirac cones as $W$ increases from 0.15 (green) to 0.35 (orange). In (b) we give a top-down view of all three monolayer dirac cones as $W$ increases from 0.15 (green) to 0.35 (orange).}
    \label{fig:cones}
\end{figure}

Next, we define the projection operator, which will produce the $W$-truncated fiber reciprocal Hamiltonian. For each $q\in\bigcup_{j=1}^3\Gamma_j^*$ and $W\in\mathbb{R}_{+}^3$, let $J_W(q):\ell^2(\mathcal{W}^*(q,W))\to\ell^2(\Omega^*)$ be the operator that extends a state by zero outside of $\mathcal{W}^*(q, W)$. We define the $W$-truncation operator $P_W(q)\in\mathcal{L}(\ell^2(\Omega^*))$ as the orthogonal projection:
\begin{equation}
    P_W(q) = J_W(q)J_W^*(q).
\end{equation}
Specifically, for any state $\psi\in\ell^2(\Omega^*)$, the action of $P_W(q)$ is given by:
\begin{equation}
    (P_W(q)\psi)_x = 
    \begin{cases}
        \psi_x, & x\in\mathcal{W}^*(q, W),\\
        0, & x\in\Omega^*\setminus\mathcal{W}^*(q, W).
    \end{cases}
\end{equation}
The $W$-truncation of the $\tau$-truncated fiber reciprocal Hamiltonian $\widehat{H}_q^{(\tau, W)}$ is defined as
\begin{equation}
    \widehat{H}_q^{(\tau,W)} = P_W(q)\widehat{H}_q^{\tau}P_W(q)\quad\text{for all}\quad q\in\mathbb{R}^2
\end{equation}
and the associated discretized regularized density of states $\widehat{\mathcal{D}}_{\epsilon}^{(N,\tau,W)}$ is defined as
\begin{equation}
    \widehat{\mathcal{D}}_{\epsilon}^{(N,\tau,W)}(E) = \mathcal{T}_{N}^W\delta_\epsilon(E-\widehat{H}_q^{(\tau,W)})\quad\text{for all}\quad E\in\mathbb{R}
\end{equation}
where $\mathcal{T}_{N}^W$ is the restriction of the discretized functional $\mathcal{T}_{N}$ to the $W$-balls about the $K$-points of each layer, that is,
\begin{gather}
    \mathcal{T}_{N}^Wf(\widehat{H}^{(\tau,W)}) = \frac{\nu^*}{N^2}\sum_{j=1}^3|\Gamma_j^*|\sum_{\substack{j\alpha\in\mathcal{A}\\q\in\mathfrak{S}_j(N,W)}}[f(\widehat{H}_q^{(\tau, W)})]_{(0,j\alpha)}^{(0,j\alpha)}\quad\text{with}\\ \mathfrak{S}_j(N,W) = \bigcup_{\widetilde{K}\in\{K_j,K_j'\}}\{q\in\mathfrak{S}_j(N): \|q-\widetilde{K}\|_2<W\}.
\end{gather}
Here we are using $\widehat{H}^{(\tau, W)}=\{\widehat{H}_q^{(\tau, W)}\}_{q\in\mathbb{R}^2}$. Since
\begin{equation}
    \mathcal{W}^*(q, W) = \emptyset
\end{equation}
for all $q\in \R^2\setminus B_W(0)$, we have that
\begin{equation}
    \mathcal{T}_{N}^{W}f(\widehat{H}^{(\tau, W)}) = \mathcal{T}_{N}f(\widehat{H}^{(\tau, W)}).
\end{equation}

Before we discuss the relative error introduced by $W$-truncation, we first need some preliminary results on the connectedness of $\mathcal{W}^*(q, W)$. We begin by rigorously defining connectedness for the reciprocal degrees of freedom. As we develop connectedness, we obtain upper and lower bounds on $W$ related to the geometry of the lattices and the existence of non-trivial paths between momenta of the same layer. We now define $(\mu,d)$-connectedness for $d(\cdot,\cdot)$ a notion of distance called a premetric, and $\mu \in \mathbb{R}_+$.
\begin{definition}
    We define a symmetric premetric space $(X, d)$ to be \textit{$(\mu, d)$-connected} if, for every pair $(x,y)\in X^2$, there exists a \textit{$(\mu, d)$-path} from $x$ to $y$, that is, a finite sequence $(z_t)_{t=1}^N\subset X$ such that
    \begin{equation}
        z_1 = x,\;\;z_N = y\;\;\text{and}\;\;d(z_{t+1},z_t)\leq\mu\;\;\text{for all}\;\;t\in\{1,2,\ldots, N-1\}.
    \end{equation}
    We denote by $\mathcal{P}_{xy}^\mu(X, d)$ the subset of all $(\mu, d)$-paths from $x$ to $y$. A symmetric premetric space $(X, d)$ is \textit{$(\mu, d)$-connected relative to $x\in X$} if $\mathcal{C}_x^\mu(X, d) = X$, where $\mathcal{C}_x^\mu:(X, d)\to\mathcal{P}(X)$ is the \textit{$\mu$-reachability operator at $x$} defined by
    \begin{equation}
        \mathcal{C}_x^\mu(X, d) = \{y\in X: \mathcal{P}_{xy}^\mu(X, d)\neq\emptyset\}.
    \end{equation}
\end{definition}

Clearly, if $x$ is $(\mu, d)$-path connected to $y$, then $y$ is $(\mu, d)$-path connected to $x$ by the reversal. As in \cite{massatt_incommensurate_2018}, we define additional sets describing which $(\mu_j,d_j)$-connected indices have momenta $W_j$-close to the Dirac point closest to $q$ for layer $j$. 
\begin{definition}
    Let $W\in\mathbb{R}_+^3$ be a vector of $W$-truncation parameters, $\mu\in\mathbb{R}_+^3$ be a vector of maximal path lengths, and $d = \{d_1, d_2, d_3\}$ be a collection of metrics. The \textit{momentum window selection} function $\lambda_j:\Gamma_j^*\times\mathbb{R}_+\to \mathcal{P}(\Omega_j^*)$ is defined by
    \begin{equation}
        \lambda_j(q,W_j) = \{G\in\kappa_j^*:\|[q+G_k+G_l]_j-\widetilde{K}\|_2<W_j\}\times\mathcal{A}_j
    \end{equation}
    where $q\in B_{W_j}(K_j)\cup B_{W_j}(K_j')$, $\widetilde{K}$ is the $K$-point closest to $q$, and $[\cdot]_j:\mathbb{R}^2\to\Gamma_j^*$ is defined by
    \begin{equation}
        [x]_j = x-B_j\lfloor B_j^{-1}x\rfloor,
    \end{equation}
    selects the initial set of indices whose shifted momenta fall within a radius $W_j$ of the Dirac point for layer $j$. The \textit{global connection} function $\Lambda_{\mu d}^*:\mathbb{R}^2\times\mathbb{R}_+^3\to\mathcal{P}(\Omega^*)$ is defined by
    \begin{gather}
        \Lambda_{\mu d}^*(q, W) = \bigcup_{j=1}^3\Lambda_{\mu_{j}d_{j}}^*(q,W_j)\quad\text{where}\quad \Lambda_{\mu_{j}d_{j}}^*(q,W_j) = \mathcal{C}_0^{\mu_j}(\lambda_j(q,W_j), d_j)
    \end{gather}
    is the set of all index pairs that are $(\mu_j,d_j)$-connected relative to the $0$ index.
\end{definition}

We define the following metric for use in this setting.
\begin{definition}
    Define the metric $\mathfrak{d}_{jj}:[\mathcal{W}_{j}^*(q, W_j)]^2\to\mathbb{R}$ by
    \begin{equation}
        \mathfrak{d}_{jj}\bigl((G,j\alpha), (G',j\beta)\bigr) = \|(G_k\oplus G_l)-(G_k'\oplus G_l')\|_2.
    \end{equation}
\end{definition}

Let $S = \{\pm e_1, \pm e_2, \pm(e_1+e_2)\}$ where $e_j$ is the $j$th standard ordered $2$D basis vector. For $j\in\{1,2,3\}$, define the set of admissible lattice steps $\mathcal{S}_j = B_kS\cup B_lS$ and the set of  admissible $\mathbb{R}^2$ steps $P_j = (B_k-B_j)S\cup (B_l-B_j)S$. Consider the two-vectors in $P_j$ ordered by their polar angle counter-clockwise, beginning at $0$. Then we define $\vartheta_j\in[0,\pi)$ to be the maximum angular separation between adjacent vectors. Additionally, define $r_j= \max_{p\in P_j}\|p\|_2$, that is, the maximum distance for a single allowed step in $\mathbb{R}^2$. We obtain the following results on connectedness as it relates to $\mathcal{W}^*$ and $\Lambda_{\mu d}^*$.
\begin{theorem}
     \label{thm:layerconnect}
    For $q\in B_{W_j}(K_j)\cup B_{W_j}(K_j')$, if 
    \begin{gather}
        \frac{1}{2}\mathfrak{u}_j\mu_j\ll\delta_{W_j}<\frac{1}{2}\|K_j-K_j'\|_2-W_j,\quad \mu_j\geq\max_{s\in\mathcal{S}_j}\|B_js\|_2,\\ 
        \mathfrak{u}_j = \|[(B_k-B_j)B_k^{-1}, (B_l-B_j)B_l^{-1}]\|_2,\quad \frac{r_j}{2}\sec\frac{\vartheta_j}{2}<W_j
    \end{gather}
    then $\mathcal{W}_j^*(q, W_j)$ is $(\mu_j,\mathfrak{d}_{jj})$-connected relative to $0$ and $\mathcal{W}_j^*(q, W_j) = \Lambda_{\mu_jd_j}^*(q,W_j)$.
\end{theorem}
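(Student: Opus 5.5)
Our plan is to recast $\mathcal{W}_j^*(q,W_j)$ as a set of lattice points lying inside a disk, and then prove connectedness by a greedy walk toward the centre of that disk.

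\textbf{The map $\Phi_j$.} For $G\in\kappa_j^*$ set
\begin{equation*}
\Phi_j(G)=q-\widetilde{K}+(B_k-B_j)B_k^{-1}G_k+(B_l-B_j)B_l^{-1}G_l .
\end{equation*}
Then $\mathcal{W}_j^*(q,W_j)$ consists exactly of the $G$ with $\Phi_j(G)\in B_{W_j}(0)$, and $0$ belongs to this set because $\|q-\widetilde{K}\|_2<W_j$.

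Changing $G$ by one admissible lattice step $s\in\mathcal{S}_j$ changes $\Phi_j(G)$ by a vector $p\in P_j$. Such a step has $\mathfrak{d}_{jj}$-length $\|s\|_2$. Since $\mu_j\geq\max_{s\in\mathcal{S}_j}\|B_js\|_2$, and $B_j s$ measures the same lattice step, each admissible step is a legal $(\mu_j,\mathfrak{d}_{jj})$-move, provided both endpoints lie in $\mathcal{W}_j^*$.

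\textbf{Step 1: greedy descent.} Given any $x\in\mathcal{W}_j^*$ with $\Phi_j(x)=v\neq 0$, we want a step $p\in P_j$ such that $\|v+p\|_2<\|v\|_2$ and $\|v+p\|_2<W_j$. The set $P_j$ is symmetric and its directions have maximal angular gap $\vartheta_j$. Hence some $p\in P_j$ satisfies $\angle(p,-v)\leq\vartheta_j/2$, and
\begin{equation*}
\|v+p\|_2^2=\|v\|_2^2-2\|v\|_2\|p\|_2\cos\angle(p,-v)+\|p\|_2^2 .
\end{equation*}
This is strictly smaller than $\|v\|_2^2$ whenever $\|v\|_2>\tfrac{\|p\|_2}{2}\sec\tfrac{\vartheta_j}{2}$, which holds as soon as $\|v\|_2>\tfrac{r_j}{2}\sec\tfrac{\vartheta_j}{2}$.

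So from any point outside the disk of radius $\rho_j:=\tfrac{r_j}{2}\sec\tfrac{\vartheta_j}{2}$ we can move strictly inward while staying inside $B_{W_j}(0)$. The hypothesis $\rho_j<W_j$ makes this inner disk nonempty and strictly contained in the window.

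\textbf{Step 2: reaching the origin index.} The greedy walk need not terminate at $G=0$; it may stop at some other lattice point with $\|\Phi_j\|_2\leq\rho_j$. To finish, we show that all lattice points whose image lies in the inner disk $\overline{B_{\rho_j}(0)}$ are mutually connected inside $B_{W_j}(0)$. We plan to use that $\Phi_j(\kappa_j^*)$ is a dense, discrete-sum set generated by $P_j$, together with the margin $W_j-\rho_j>0$ to allow detours. Incommensurability (\zcref{ass:incom}) makes $\Phi_j$ injective, so "points" and "indices" can be identified.

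The condition $\tfrac12\mathfrak{u}_j\mu_j\ll\delta_{W_j}$ bounds how far one move displaces $\Phi_j$: the displacement is at most $\mathfrak{u}_j\|s\|_2$. The strict inequality then keeps every intermediate step inside the window.

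\textbf{Main obstacle.} We expect Step 2 to be the hardest part. The plan is to show that $\Phi_j^{-1}\bigl(\overline{B_{\rho_j}}\bigr)$ forms a single connected component using $\mu_j$-steps. We would try to reduce this to Step 1 applied around a shifted centre: choose a target point $y$, run the greedy descent toward $\Phi_j(y)$ instead of $0$, and use that $B_{W_j}(0)$ contains $B_{\rho_j}(\Phi_j(y))$ when $\|\Phi_j(y)\|_2$ is small. This requires the slack $W_j>\rho_j$ to be quantitatively large, and that slack is exactly where $\delta_{W_j}$ enters.

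\textbf{Equality $\mathcal{W}_j^*=\Lambda^*_{\mu_jd_j}$.} Finally, we identify $\lambda_j(q,W_j)$ with $\mathcal{W}_j^*(q,W_j)$. The identity $[q+G_k+G_l]_j=q+G_k+G_l-B_jn$ holds for the unique $n$ that brings this point into $\Gamma_j^*$. Writing $G_k=B_kB_k^{-1}G_k$ and noting $B_jB_k^{-1}G_k\in\mathcal{R}_j^*$, the two distance conditions agree once the fold does not jump between the $K$ and $K'$ valleys. That is guaranteed by $\delta_{W_j}<\tfrac12\|K_j-K_j'\|_2-W_j$. Connectedness relative to $0$ then gives $\mathcal{C}_0^{\mu_j}=\mathcal{W}_j^*$, which completes the proof.
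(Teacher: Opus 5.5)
Your Step 1 is the paper's descent lemma. The proposal has two genuine gaps, and the more serious one is in your last paragraph.

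\textbf{The identification of $\lambda_j(q,W_j)$ with $\mathcal{W}_j^*(q,W_j)$ is false.} With $G_k=B_km_k$ and $G_l=B_lm_l$ you correctly get $[q+G_k+G_l]_j=[\widetilde{K}+\Phi_j(G)]_j$. However, the fold only sees $\Phi_j(G)$ modulo $\mathcal{R}_j^*$. So $\lambda_j$ is the set of $G$ with $\Phi_j(G)\in\bigcup_{b\in\mathcal{R}_j^*}B_{W_j}(b)$, while $\mathcal{W}_j^*$ keeps only $b=0$. The translated windows are not empty. By incommensurability, $\{B_km_k \bmod \mathcal{R}_j^*\}$ is dense in $\Gamma_j^*$, yet only finitely many $m_k$ have $\|(B_k-B_j)m_k\|_2$ bounded. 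Hence infinitely many $G$, already with $G_l=0$, lie in $\lambda_j\setminus\mathcal{W}_j^*$. No condition on $W_j$ removes them; in particular $\delta_{W_j}<\frac12\|K_j-K_j'\|_2-W_j$ has nothing to do with $K$ versus $K'$ folding of these points.

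Connectedness of $\mathcal{W}_j^*$ therefore only gives $\mathcal{W}_j^*\subset\Lambda^*_{\mu_jd_j}(q,W_j)$. The reverse inclusion needs the separation argument that is the core of the paper's proof:
\begin{itemize}
\item along a $(\mu_j,\mathfrak{d}_{jj})$-path inside $\lambda_j$, each step changes $\Phi_j$ by $[(B_k-B_j)B_k^{-1},(B_l-B_j)B_l^{-1}](\Delta G_k\oplus\Delta G_l)$, whose norm is at most $\mathfrak{u}_j\mu_j$;
\item distinct windows $B_{W_j}(b)$, $b\in\mathcal{R}_j^*$, are separated by gaps of width at least $\|K_j-K_j'\|_2-2W_j>2\delta_{W_j}\gg\mathfrak{u}_j\mu_j$;
\item hence a path starting at $0$ can never leave the $b=0$ window.
\end{itemize}
This is where $\frac12\mathfrak{u}_j\mu_j\ll\delta_{W_j}$ is actually used. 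It is not needed to keep the descent inside the window, which is automatic because $\|\Phi_j\|_2$ decreases.

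\textbf{Step 2 remains open.} You are right that descent only brings $\Phi_j$ into the disk of radius $\frac{r_j}{2}\sec\frac{\vartheta_j}{2}$, not to the index $0$. The paper's proof passes over this by asserting that $\gamma_x$ and $\gamma_y$ both end at $0$. Your remedy does not close it, for two reasons.
\begin{itemize}
\item Greedy descent toward $\Phi_j(y)$ stops somewhere within that radius of $\Phi_j(y)$, at an index that need not be $y$. Re-centring just reproduces the problem.
\item $\delta_{W_j}$ only bounds $W_j$ from above in terms of the distance to the other valley. It gives no lower bound on the slack $W_j-\frac{r_j}{2}\sec\frac{\vartheta_j}{2}$.
\end{itemize}
Because the image $\Phi_j(\kappa_j^*)$ is dense, two indices with nearby images can differ by a long vector of $\mathbb{Z}^4$ whose $k$- and $l$-parts nearly cancel under $\Phi_j$. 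What is needed is a genuinely four-dimensional routing argument. For instance, decompose $y-x$ into admissible steps and reorder them via the Steinitz lemma, so that the partial images stay within distance of order $r_j$ of the segment $[\Phi_j(x),\Phi_j(y)]$. That argument requires a margin of order $r_j$ between $W_j$ and $\frac{r_j}{2}\sec\frac{\vartheta_j}{2}$, which the stated hypotheses do not supply.
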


By taking the appropriate minimums and maximums of the layer parameters in \zcref{thm:layerconnect}, one immediately obtains the following corollary.
\begin{corollary}
    \label{cor:connect}
    For $q\in\bigcup_{j=1}^3 B_{W}(K_j)\cup B_W(K_j')$, if
    \begin{gather}
        \frac{1}{2}\mathfrak{u}\mu\ll \delta_W<\frac{1}{2}\min_{j\in\{1,2,3\}}\|K_j-K_j'\|_2-W,\quad\mu\geq\max_{j\in\{1,2,3\}}\max_{s\in\mathcal{S_j}}\|B_js\|_2,\\
        \mathfrak{u} = \max_{j\in\{1,2,3\}}\|[(B_k-B_j)B_k^{-1}, (B_l-B_j)B_l^{-1}]\|_2,\quad \max_{j\in\{1,2,3\}}\frac{r_j}{2}\sec\frac{\vartheta_j}{2}<W,
    \end{gather}
    then $\mathcal{W}^*(q, W)$ is $(\mu, \mathfrak{d})$-connected relative to $0$ and $\mathcal{W}^*(q, W) = \Lambda_{\mu d}^*(q, W)$.
\end{corollary}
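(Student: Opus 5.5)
The plan is to apply \zcref{thm:layerconnect} layer by layer with uniform parameters, and then pass to the union defining $\mathcal{W}^*$ and $\Lambda_{\mu d}^*$.

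First I would verify the hypotheses of \zcref{thm:layerconnect} for each $j\in\{1,2,3\}$, taking $W_j=W$, $\mu_j=\mu$, $\delta_{W_j}=\delta_W$. Fix $q\in\bigcup_{j}B_W(K_j)\cup B_W(K_j')$. The bound $\mu\geq\max_j\max_{s\in\mathcal{S}_j}\|B_js\|_2$ gives $\mu_j\geq\max_{s\in\mathcal{S}_j}\|B_js\|_2$. From $\mathfrak{u}\geq\mathfrak{u}_j$ we get $\tfrac12\mathfrak{u}_j\mu\leq\tfrac12\mathfrak{u}\mu\ll\delta_W$. The upper bound $\delta_W<\tfrac12\min_j\|K_j-K_j'\|_2-W\leq\tfrac12\|K_j-K_j'\|_2-W$ holds for every $j$. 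Finally, $\tfrac{r_j}{2}\sec\tfrac{\vartheta_j}{2}\leq\max_j(\cdot)<W$. So each layer condition holds with the common parameters.

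Second, I would handle which layers actually see $q$. The layer sets $\mathcal{W}_j^*(q,W)$ and $\lambda_j(q,W)$ are only defined, or only nonempty, when $q\in B_W(K_j)\cup B_W(K_j')$. For the other layers I would adopt the convention already used in the paper that $\mathcal{W}_j^*=\emptyset$, and check that $\lambda_j$, and hence $\Lambda^*_{\mu d_j}$, is then also empty. This needs care because $\lambda_j$ uses the folded momentum $[\cdot]_j$. For every layer $j$ with $q$ in its $W$-ball, \zcref{thm:layerconnect} gives that $\mathcal{W}_j^*(q,W)$ is $(\mu,\mathfrak{d}_{jj})$-connected relative to $0$ and that $\mathcal{W}_j^*(q,W)=\Lambda^*_{\mu d_j}(q,W)$. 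Taking unions over $j$ then yields $\mathcal{W}^*(q,W)=\Lambda^*_{\mu d}(q,W)$.

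Third, I would make precise the meaning of $(\mu,\mathfrak{d})$-connected relative to $0$ for the disjoint union. Here $\mathfrak{d}$ is the premetric equal to $\mathfrak{d}_{jj}$ on each layer block, and the statement means that every element of each nonempty layer piece is $\mu$-reachable from that layer's $0$ index. This follows directly from the layerwise result.

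The main obstacle is interpretive rather than computational. Cross-layer connectedness through the $0$ index is not asserted by \zcref{thm:layerconnect}, and the sets for layers not containing $q$ must be consistently defined. If genuine inter-layer connectedness were intended, I would try to link the pieces using interlayer hoppings with $\mathfrak{d}_{jk}$ measured in the shared coordinates $G_l$. I expect, however, that the corollary is meant blockwise, so the proof reduces to the parameter comparisons above.
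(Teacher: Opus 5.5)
Your proposal is correct and is essentially the paper's argument. The paper simply states that the corollary follows immediately from \zcref{thm:layerconnect} by taking the appropriate maxima and minima of the layer parameters, which is exactly your hypothesis-by-hypothesis check with $W_j=W$, $\mu_j=\mu$, $\delta_{W_j}=\delta_W$ followed by a union over layers; your extra care (the empty-set convention for layers whose $W$-ball misses $q$, and the blockwise reading of $(\mu,\mathfrak{d})$-connectedness via $\mathfrak{d}_{jj}$) only makes explicit conventions that the paper leaves implicit.
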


We now present the lemma on the relative error introduced by $W$-truncation.
\begin{lemma}
    \label{lem:errorW}
    Let $\Sigma\subset\mathbb{R}$ be a bounded energy window. Choose a base truncation radius $W_0$ such that
    \begin{equation}
         W_0\geq \frac{1}{v_f}\left(\sup_{E\in\Sigma}|E|+\frac{8+3\alpha}{2}\left\|\widehat{H}^{\tau}_{\mathrm{inter}}\right\|_2+2\epsilon\right)
    \end{equation}
    where $v_f$ is the Fermi velocity and $\alpha\in\mathbb{R}_+$. For double-incommensurate TTG, the following bound holds
    \begin{gather}
        |\widehat{\mathcal{D}}_{\epsilon}^{(N,\tau)}(E)-\widehat{\mathcal{D}}_{\epsilon}^{(N,\tau,W)}(E)|\lesssim\epsilon^{-3}e^{-\eta^2\epsilon^{-2}}
            +\epsilon^{-4}e^{-\ln{\frac{2+\alpha}{2}}\left\lceil\frac{1}{2}\left\lfloor\frac{W-W_0}{2\mathfrak{u}}+1\right\rfloor\right\rceil}\\
         +\epsilon^{-1}e^{-\frac{v^2\mathfrak{u}^2}{8\epsilon^2}\left\lfloor\frac{W-W_0}{2\mathfrak{u}}+2\right\rfloor^2}
    \end{gather}
    for all $E\in\Sigma$ where
    \begin{gather}
        \frac{1}{2}\mathfrak{u}\mu\ll \delta_W<\frac{1}{2}\min_{j\in\{1,2,3\}}\|K_j-K_j'\|_2-W,\quad\mu\geq\max_{j\in\{1,2,3\}}\max_{s\in\mathcal{S}_j}\|s\|_2,\\
        \mathfrak{u} = \max_{j\in\{1,2,3\}}\|[(B_k-B_j)B_k^{-1}, (B_l-B_j)B_l^{-1}]\|_2,\quad \max_{j\in\{1,2,3\}}\frac{r_j}{2}\sec\frac{\vartheta_j}{2}<W,\\
        \eta = (2+\alpha)\|\widehat{H}_{\mathrm{inter}}^{\tau}\|_2.
    \end{gather}
\end{lemma}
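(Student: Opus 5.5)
We aim to compare, for each sample momentum $q\in\mathfrak{S}_j(N)$ and each $j\alpha$, the diagonal entries $[\delta_\epsilon(E-\widehat{H}_q^{\tau})]_{(0,j\alpha)}^{(0,j\alpha)}$ and $[\delta_\epsilon(E-\widehat{H}_q^{(\tau,W)})]_{(0,j\alpha)}^{(0,j\alpha)}$. Because $\mathcal{T}_N$ is a normalized average, a uniform pointwise bound in $q$ then gives the stated bound. The momenta split into two classes.

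\textbf{Momenta far from the $K$-points.} For $q$ outside $B_W(K_j)\cup B_W(K_j')$, the truncated functional contributes nothing. We must therefore show that the full diagonal entry is itself small. Write $\widehat{H}_q^\tau=\widehat{H}_{\mathrm{intra}}+\widehat{H}^\tau_{\mathrm{inter}}$. At the index $(0,j\alpha)$, the intralayer block is the monolayer symbol $\widetilde{h}(q)$, whose eigenvalues have modulus $\gtrsim v_f\,\mathrm{dist}(q,\{K_j,K_j'\})\ge v_fW$ by the Dirac cone structure. Since $W\ge W_0$ and $W_0 v_f-\sup|E|$ exceeds $\eta=(2+\alpha)\|\widehat{H}^\tau_{\mathrm{inter}}\|_2$, the energy $E$ is separated from the local energy at $(0,j\alpha)$ by at least about $\eta$.

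We handle this with a Combes--Thomas / resolvent argument. Represent $\delta_\epsilon(E-\cdot)$ by a contour integral over a contour at distance $\sim\epsilon$ from the real axis, or via the Fourier representation $\delta_\epsilon(x)=\frac1{2\pi}\int e^{-\epsilon^2t^2/2}e^{itx}dt$. We then use that the spectral projection of the local block away from $E$ suppresses the entry. This should yield the Gaussian term $\epsilon^{-3}e^{-\eta^2\epsilon^{-2}}$, where the Gaussian tail $e^{-\eta^2/(2\epsilon^2)}$ comes from evaluating $\delta_\epsilon$ at distance $\eta$, and the polynomial prefactor comes from derivatives or resolvents.

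\textbf{Momenta near the $K$-points.} Here the comparison is a genuine truncation error. By \zcref{cor:connect}, $\mathcal{W}^*(q,W)=\Lambda^*_{\mu d}(q,W)$ is connected relative to $0$. Graph distance in the hopping graph therefore controls the momentum offset: a site reached from $0$ in $n$ interlayer hops has shifted momentum moving by at most $\mathfrak{u}$ per step in the $\mathfrak{d}$ metric. Leaving the window of radius $W$ thus requires roughly $n\ge\lfloor (W-W_0)/(2\mathfrak{u})+1\rfloor$ hops, counting a buffer layer of width $W_0$ where the local energies stay within $\eta$ of $E$. Outside this buffer the diagonal (intralayer) energies satisfy $|\epsilon_{\mathrm{loc}}-E|\ge(2+\alpha)\|\widehat H^\tau_{\mathrm{inter}}\|/2+\dots$, by the choice of $W_0$ with the factor $\frac{8+3\alpha}{2}$.

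We expand the resolvent difference $(z-\widehat{H}_q^\tau)^{-1}-(z-\widehat H_q^{(\tau,W)})^{-1}$ by a Neumann/geometric series in the interlayer coupling restricted to the complement region. Each pair of hops beyond the buffer gains a factor $\frac{2}{2+\alpha}$. This gives $\bigl(\tfrac{2}{2+\alpha}\bigr)^{\lceil n/2\rceil}$, which is the second term; the $\epsilon^{-4}$ prefactor comes from resolvent norms $\epsilon^{-1}$ and the contour length and derivative factors. The third term, $\epsilon^{-1}e^{-v^2\mathfrak{u}^2 n^2/(8\epsilon^2)}$, accounts for the portion of $\delta_\epsilon$ not captured by the contour split. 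Concretely, we decompose $\delta_\epsilon$ into an analytic part supported near $E$ plus a Gaussian tail evaluated at energy distance $\gtrsim v_f\mathfrak{u}n/2$.

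\textbf{Main obstacle.} The hardest step is the locality estimate in the near-$K$ regime. It requires a Combes--Thomas bound in which the decay is measured not in lattice distance but in energy mismatch, which grows linearly with momentum through the Dirac cone. The difficulty is that the diagonal is not a constant shift, so standard Combes--Thomas does not apply directly. Our first attempt would be a Schur complement on the buffer region versus its complement. On the complement, the diagonal dominance $|E-\epsilon_{\mathrm{loc}}|\ge\frac{2+\alpha}{2}\|\widehat H^\tau_{\mathrm{inter}}\|$ makes the Neumann series converge with ratio $\frac{2}{2+\alpha}$. Matching the exact floors and ceilings in the exponents is bookkeeping once this is in place.
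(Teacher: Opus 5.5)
Your near-$K$ mechanism matches the paper's case (i): diagonal dominance beyond a buffer fixed by the choice of $W_0$, then a Schur-complement/Neumann argument gaining a factor $\tfrac{2}{2+\alpha}$ per shell. The gap is in your first class of momenta.

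\textbf{Far momenta.} For $q\notin B_W(K_j)\cup B_W(K_j')$ you claim $[\delta_\epsilon(E-\widehat H_q^{\tau})]_{(0,j\alpha)}^{(0,j\alpha)}\lesssim\epsilon^{-3}e^{-\eta^2\epsilon^{-2}}$, on the grounds that the intralayer symbol at the single index $(0,j\alpha)$ is at distance at least $\eta$ from $E$. That does not follow.

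The entry equals $\int\delta_\epsilon(E-\lambda)\,d\mu_{(0,j\alpha)}(\lambda)$, where $\mu_{(0,j\alpha)}$ is the spectral measure of the \emph{coupled} operator. Now $\widehat H_q^{\tau}$ contains indices whose moir\'e momenta $q+(B_k-B_j)B_k^{-1}G_k+(B_l-B_j)B_l^{-1}G_l$ lie near $K_j$ or $K_j'$; these are dense by double incommensurability, and they are reachable from $(0,j\alpha)$ by interlayer hops. So $\mu_{(0,j\alpha)}$ generically has mass near $E$. That mass is only geometrically small in the number of moir\'e shells separating $q$ from the resonant region, and it does not vanish as $\epsilon\to0$, whereas your bound does. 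Local separation at one site gives nothing without a locality argument.

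For these $q$ you need the mechanism of the paper's case (ii):
\begin{enumerate}
\item[(a)] Take the shells $U_k$, which make $\widehat H_q^\tau$ block tridiagonal.
\item[(b)] Use the Banachiewicz formula to replace the resolvent on the shell of $q$ by that of $\mathfrak{h}_{m_k:\infty}$, i.e.\ the shells beyond $m_k=\lfloor k/2\rfloor$. The cost is geometric in $\lceil k/2\rceil$.
\item[(c)] Use that $\sigma(\mathfrak{h}_{m_k:\infty})$ stays at distance $\gtrsim vW_{m_k}$ from $E$ (Dirac cone plus Weyl's inequality). Then $J_k^*\delta_\epsilon(E-\mathfrak{h}_{m_k:\infty})J_k$ is Gaussian small at a distance that \emph{grows} with $k$.
\end{enumerate}
These momenta therefore produce the second and third terms, not the first.

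\textbf{Attribution of the Gaussian terms.} Your assignment of the two Gaussian terms in the near-$K$ regime is also reversed. The Neumann series with ratio $\tfrac{2}{2+\alpha}$ needs $d(z,\sigma(\mathfrak{h}^{\mathrm{intra}}_{j:N_W}))\ge\tfrac{4+\alpha}{2}\|\widehat H^{\tau}_{\mathrm{inter}}\|_2$ on every shell beyond the buffer. With the buffer fixed at $W_0$, the hypothesis on $W_0$ guarantees this only for $|z|\le\sup_{E\in\Sigma}|E|+\eta+2\epsilon$; indeed $(2+\alpha)+\tfrac{4+\alpha}{2}=\tfrac{8+3\alpha}{2}$ is exactly the constant in the hypothesis.

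So the part of the contour you must discard is $\{\operatorname{Re}z\notin\Sigma+B_\eta\}$. There you only have the crude bound $\|R_z(\widehat H^\tau_q)-R_z(\widehat H^{(\tau,W)}_q)\|_2\lesssim\epsilon^{-2}$, and $|\delta_\epsilon(E-z)|$ is small only at the \emph{fixed} distance $\eta$. This is exactly where $\epsilon^{-3}e^{-\eta^2\epsilon^{-2}}$ comes from. It is not a tail at distance $\gtrsim v_f\mathfrak{u}n/2$; that would require the diagonal-dominance region to move outward with $n$, which you do not set up.

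With both corrections your outline becomes the paper's proof:
\begin{enumerate}
\item[(1)] the contour split $C_\pm$ gives the first term;
\item[(2)] a Schur complement toward the outer shell $N_W$, when $q$ lies in the inner half of the shells, gives the second term;
\item[(3)] a Schur complement toward the midpoint plus the spectral gap, when $q$ lies in the outer half or outside the window, gives the second and third terms.
\end{enumerate}
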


\subsection{$L$-truncation: truncation of scattering channels near $K$-point}

Let $L\in\mathbb{R}_+^3$ be a vector of $L$-truncation parameters. The layer $L$-truncation function $\mathcal{L}_j^*:\mathbb{R}_+\to\mathcal{P}(\Omega_j^*)$ restricts the reciprocal lattice vector pairs of layer $j$ based on the maximum norm, that is,
\begin{equation}
    \mathcal{L}^*_j(L_j) = \{G\in\kappa_j^*:\max(\|G_k\|_2, \|G_l\|_2)<L_j\}\times\mathcal{A}_j.
\end{equation}
The total $L$-truncation function $\mathcal{L}^*:\mathbb{R}_+^3\to\mathcal{P}(\Omega^*)$ is defined as the union of the layer $L$-truncation functions, that is,
\begin{equation}
    \mathcal{L}^*(L) = \bigcup_{j=1}^3\mathcal{L}^*_j(L_j).
\end{equation}
For each $L\in\mathbb{R}_3^+$, let $J_L:\ell^2(\mathcal{L}^*(L))\to\ell^2(\Omega^*)$ be the operator that extends a state by zero outside of $\mathcal{L}^*(L)$. We define the $L$-projection operator $P_L\in\mathcal{L}(\ell^2(\Omega^*))$ as the orthogonal projection:
\begin{equation}
    P_L = J_LJ_L^*.
\end{equation}
Specifically, for any state $\psi\in\ell^2(\Omega^*)$, the action of $P_L$ is given by
\begin{equation}
    (P_L\psi)_x =
    \begin{cases}
        \psi_x, & x\in\mathcal{L}^*(L),\\
        0, & x\in\Omega^*\setminus\mathcal{L}^*(L).
    \end{cases}
\end{equation}
The $L$-truncation of the $(\tau,W)$-truncated fiber reciprocal Hamiltonian $\widehat{H}_q^{(\tau,W,L)}$ is defined by
\begin{equation}
    \widehat{H}_q^{(\tau,W,L)} = P_L\widehat{H}_q^{(\tau,W)}P_L\quad\text{for all}\quad q\in\mathbb{R}^2
\end{equation}
and the associated truncated regularized density of states $\widehat{\mathcal{D}}_\epsilon^{(N,\tau,W,L)}$ is defined by
\begin{equation}
    \widehat{\mathcal{D}}_\epsilon^{(N,\tau,W,L)}(E) = \mathcal{T}_{N}^W\delta_\epsilon(E-\widehat{H}_q^{(\tau,W,L)})\quad\text{for all}\quad E\in\mathbb{R}.
\end{equation}
The following figure illustrates how $L$ changes the density of the reciprocal degrees of freedom.

\begin{figure}[ht]
    \centering
    \begin{subfigure}[b]{0.32\textwidth}
        \centering
        \includegraphics[width = \textwidth, alt = {The $WL$-truncated degrees of freedom centered at $\widetilde{K}$ for $L=30$ at $\theta = [-1.4,0,2.8]$.}]{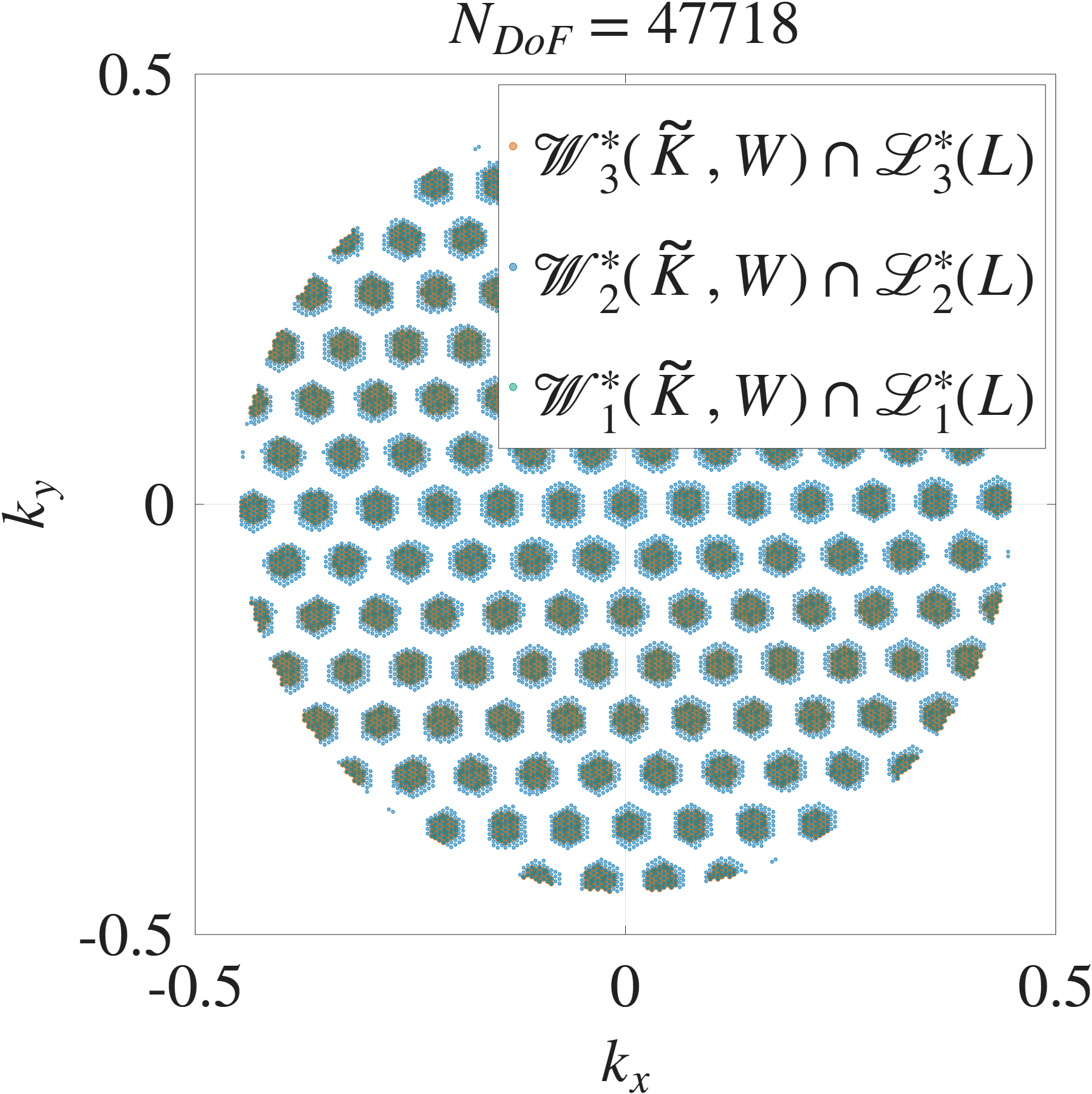}
        \caption{$L = 30$}
    \end{subfigure}
    \hfill
    \begin{subfigure}[b]{0.32\textwidth}
        \centering
        \includegraphics[width = \textwidth, alt = {The $WL$-truncated degrees of freedom centered at $\widetilde{K}$ for $L=60$ at $\theta = [-1.4,0,2.8]$.}]{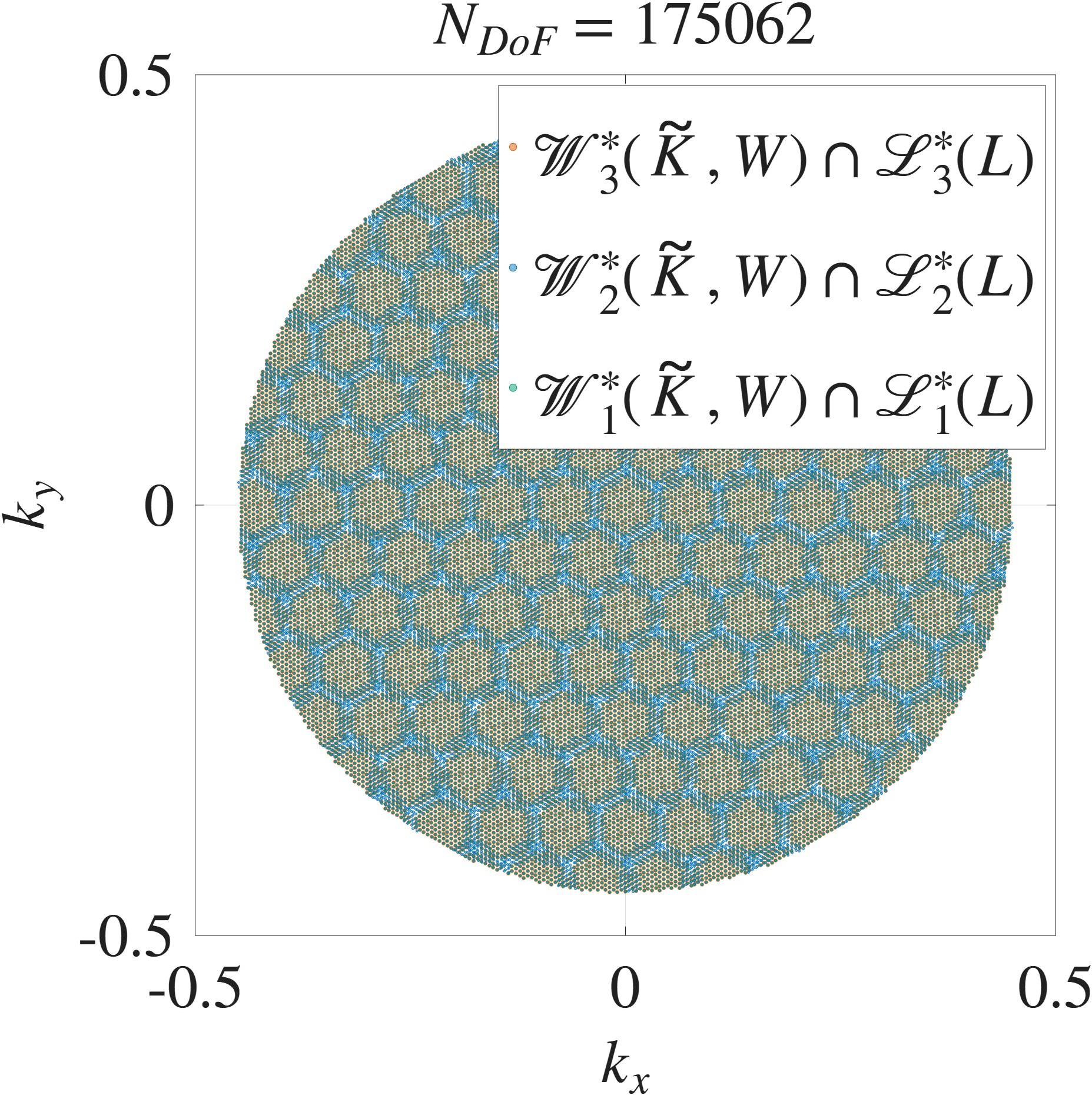}
        \caption{$L = 60$}
    \end{subfigure}
    \hfill
    \begin{subfigure}[b]{0.32\textwidth}
        \centering
        \includegraphics[width = \textwidth, alt = {The $WL$-truncated degrees of freedom centered at $\widetilde{K}$ for $L=120$ at $\theta = [-1.4,0,2.8]$.}]{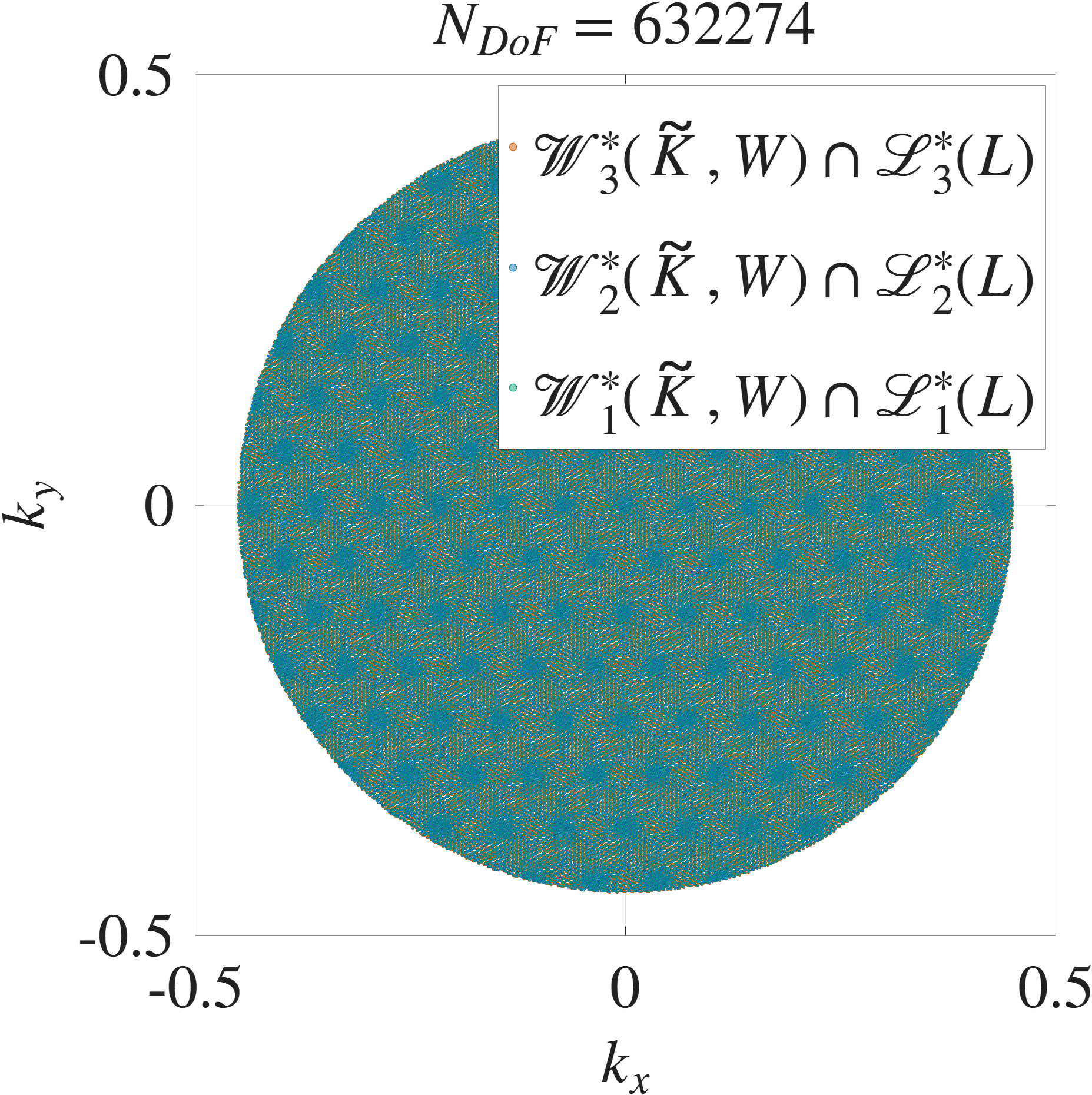}
        \caption{$L = 120$}
    \end{subfigure}
    \caption[Increasing $L$ increases the density of points in the $W$-ball]{For fixed $W = 0.45$ and $\theta = [-1.4, 0, 2.8]$, we demonstrate the filling of the moir\'{e} mapped $\mathcal{W}_j^*(q,W)$, that is, $(B_k-B_j)B_k^{-1}G_k+(B_l-B_j)B_l^{-1}G_l$ for all $G\in \mathcal{W}^*(q,W)\cap\mathcal{L}^*(L)$.}
    \label{fig:density}
\end{figure}

With regard to the relative error introduced by the $L$-truncation, we obtain the following lemma.
\begin{lemma}
     \label{lem:errorL}
    For double-incommensurate TTG, the following bound holds
    \begin{equation}
        |\widehat{\mathcal{D}}_\epsilon^{(N,\tau,W)}(E)-\widehat{\mathcal{D}}_\epsilon^{(N,\tau,W,L)}(E)|\lesssim \epsilon^{-3}\gamma_\epsilon^{-1}Le^{-\gamma_\epsilon L}
    \end{equation}
    for all $E\in\mathbb{R}$ where 
    \begin{equation}
        \gamma_\epsilon=\frac{C'\epsilon}{\ln(\epsilon^{-1})}.
    \end{equation}
    for some $C'\in\mathbb{R}_+$.
\end{lemma}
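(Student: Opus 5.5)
The plan is to bound, for each fixed $q\in\mathfrak{S}_j(N,W)$ and each orbital $j\alpha$, the difference of diagonal entries
\[
\bigl[\delta_\epsilon(E-\widehat{H}_q^{(\tau,W)})\bigr]_{(0,j\alpha)}^{(0,j\alpha)}-\bigl[\delta_\epsilon(E-\widehat{H}_q^{(\tau,W,L)})\bigr]_{(0,j\alpha)}^{(0,j\alpha)}
\]
uniformly in $q$. Since $\mathcal{T}_N^W$ is an average with total weight $O(1)$ (bounded by $\nu^*\sum_j|\Gamma_j^*|\,|\mathcal{A}_j|$), this uniform bound transfers directly to the regularized density of states. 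I will not work with the Gaussian directly. Instead I will approximate $\delta_\epsilon$ by a function analytic in a strip, or use a contour representation
\[
f(H)=\frac{1}{2\pi i}\oint_{\mathcal{C}}f(z)(z-H)^{-1}\,dz,
\]
with $\mathcal{C}$ the boundary of a strip of half-width $\sim\epsilon$ around the real axis, truncated at $|\operatorname{Re}z|$ large. On $\mathcal{C}$ we have $|\delta_\epsilon(E-z)|\lesssim\epsilon^{-1}$, and the Gaussian tails in $\operatorname{Re}z$ make the integral converge. The whole problem then reduces to estimating resolvent differences $[(z-\widehat{H}^{(\tau,W)}_q)^{-1}-(z-\widehat{H}^{(\tau,W,L)}_q)^{-1}]_{(0,j\alpha)}^{(0,j\alpha)}$ for $z$ at distance $d\sim\epsilon$ from the real axis.

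For the resolvent difference I will use the block decomposition of $\widehat{H}_q^{(\tau,W)}$ relative to $P_L$ and $Q_L=I-P_L$. By the Schur complement / second resolvent identity,
\[
(z-H)^{-1}-P_L(z-P_LHP_L)^{-1}P_L=(z-H)^{-1}\,Q_LHP_L\,(z-P_LHP_L)^{-1}P_L
\]
plus the $Q_L$-block. The $(0,0)$ entry therefore only involves the coupling $Q_LHP_L$, which is supported on indices near the boundary $\partial\mathcal{L}^*(L)$: the $\tau$-truncation makes the hopping finite range in the $\mathfrak{d}$ metric. This gives
\[
|[\cdot]_{00}|\le\sum_{x\in\partial\mathcal{L}^*(L)\cap\mathcal{W}^*}|(z-H)^{-1}_{0x}|\,\|H\|\,\sup_y|(z-P_LHP_L)^{-1}_{y0}|.
\]
The key input is a Combes--Thomas estimate: for finite-range $H$ with hopping bounded in the $\mathfrak{d}$ metric,
\[
|(z-H)^{-1}_{xy}|\lesssim d^{-1}e^{-c\,d\,\mathfrak{d}(x,y)},
\]
and the same bound holds for the truncated operator. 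Since boundary points satisfy $\mathfrak{d}(0,x)\gtrsim L$, each factor is $\lesssim\epsilon^{-1}e^{-c\epsilon L}$. The number of boundary points in $\mathcal{W}^*(q,W)$ is what the $W$-truncation buys us. The moiré map $G\mapsto(B_k-B_j)B_k^{-1}G_k+(B_l-B_j)B_l^{-1}G_l$ sends $\mathcal{W}_j^*$ into a $W$-ball, and the connectivity from \zcref{cor:connect} lets us measure distance along $(\mu,\mathfrak{d})$-paths. The count of lattice points with $\max(\|G_k\|,\|G_l\|)\approx L$ whose moiré image lies in the $W$-ball grows only like $L$ (a 4D shell intersected with a codimension-2 constraint). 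Summing with an extra resolvent factor and integrating over the contour gives $\epsilon^{-3}Le^{-c\epsilon L}$ before the $\gamma_\epsilon$ refinement.

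The \textbf{main obstacle} is making the Combes--Thomas rate honest in the presence of the Gaussian. The Gaussian is entire, but $|\delta_\epsilon(E-z)|$ grows like $e^{(\operatorname{Im}z)^2/2\epsilon^2}$. Using a contour at distance $d$ trades the decay rate $\sim d$ against the prefactor $e^{d^2/2\epsilon^2}/\epsilon$. To handle this I will first replace $\delta_\epsilon$ by a Chebyshev/polynomial approximation, or equivalently choose $d=\epsilon\sqrt{2\ln(\epsilon^{-1})}$-type scales. Optimizing the contour distance against the polynomial degree needed for accuracy gives an effective decay rate $\gamma_\epsilon=C'\epsilon/\ln(\epsilon^{-1})$. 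Summing the geometric-type tail $\sum_{\ell\ge L}\ell e^{-\gamma_\epsilon\ell}\lesssim\gamma_\epsilon^{-1}Le^{-\gamma_\epsilon L}$ produces the extra factor $\gamma_\epsilon^{-1}$. I expect verifying the linear-in-$L$ boundary count uniformly in $q$, using \zcref{ass:incom} to avoid degenerate accumulations, to be the second delicate point.
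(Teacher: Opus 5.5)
Your argument is essentially the paper's proof. It uses a contour at distance between $\epsilon$ and $2\epsilon$ from the spectrum, so that $|\delta_\epsilon(E-z)|\lesssim\epsilon^{-1}$, and the second resolvent identity, so that only pairs coupled across $\partial\mathcal{L}^*(L)$ contribute to the $(0,j\alpha)$ entry. It then applies a Combes--Thomas bound to both resolvent factors (the paper's \zcref{lem:resolventDecay}) and sums a tail over a set growing linearly in the shell radius, $\int_L^\infty re^{-\gamma_\epsilon r}\,dr\approx\gamma_\epsilon^{-1}Le^{-\gamma_\epsilon L}$.

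What you call the main obstacle is not one, and your proposed fix is not where the logarithm comes from. You already note that the Gaussian is $O(\epsilon^{-1})$ on such a contour, so no trade-off with a polynomial degree is needed. If you did push the contour to height $d$, optimizing $e^{d^2/2\epsilon^2-cdL}$ would improve the decay in $L$ rather than produce the rate $\epsilon/\ln(\epsilon^{-1})$. In the paper, $\ln(\epsilon^{-1})$ arises inside the Combes--Thomas lemma, from cutting the exponentially decaying interlayer hopping at a radius $R\sim\ln(2/\epsilon)$. The $(\tau,W)$-truncated operator has finite range and boundedly many nonzero entries per row, so the standard Combes--Thomas argument gives rate $c\epsilon$. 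The resulting bound is at worst $\epsilon^{-4}Le^{-c\epsilon L}$. Since $\gamma_\epsilon\le c\epsilon$ and $\epsilon^{-1}\lesssim\gamma_\epsilon^{-1}$ for small $\epsilon$, this already implies the stated one.

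The detail you do need to supply is the metric. $\mathfrak{d}_{jj}$ only compares indices within one layer, while interlayer hops change coordinates from $(G_k,G_l)$ to $(G_j',G_l)$. The paper uses the extension $G^e=G_j\oplus G_k\oplus G_l$ with $m_j=-m_k-m_l$. In this metric the truncated hopping is finite range and $\|G^e\|_2\ge\max(\|G_k\|_2,\|G_l\|_2)$, so indices outside $\mathcal{L}^*(L)$ lie at distance at least $L$ from $0$.

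Finally, the linear-in-$L$ count needs no appeal to \zcref{ass:incom}. For fixed $G_l$, the $W$-window confines $G_k$ to a disk of radius of order $W/\|I-B_jB_k^{-1}\|_2$.
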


\subsection{Kernel Polynomial Method for momentum LDoS}
While we have a finite matrix $\widehat{H}_q^{(\tau, W, L)}$, the number of reciprocal degrees of freedom scales as $O(L^2W^2)$, so that for converged $L$, the size of the matrix prohibits a direct eigensolve. Instead, we utilize the Kernel Polynomial Method (KPM) \cite{weise_kernel_2006}. Let $P$ be a prescribed Chebyshev polynomial order. The KPM approximation of a delta function is given by
\begin{equation}
    \delta_{\mathrm{KPM}}^P(E, \widehat{H}_q^{(\tau, W, L)}) = \frac{1}{\pi\sqrt{1-(sE)^2}}\sum_{n=0}^{P-1}g_n^P[T_n(s\widehat{H}_q^{(\tau, W, L)})]T_n(sE)
\end{equation}
where $T_n$ is the $n$th Chebyshev polynomial defined by the recursion
\begin{align}
    T_0(x) &= 1,\\
    T_1(x) &= x,\\
    T_{n+1}(x) &= 2xT_n(x)-T_{n-1}(x)
\end{align}
and $s\in\mathbb{R}_+$ such that $\sigma\left(s\widehat{H}_q^{(\tau, W, L)}\right)\subset(-1,1)$ and $sE\in(-1,1)$. In particular, we choose $s = \|\widehat{H}_{\widetilde{K}}^{(\tau, W, L)}\|_\infty^{-1}$. The coefficient $g_n^P$ is the $n$th Jackson coefficient defined by
\begin{equation}
    g_n^P = (2-\delta_{n0})\frac{(P-n+1)\cos{\frac{\pi n}{P+1}}+\sin{\frac{\pi n}{P+1}}\cot{\frac{\pi}{P+1}}}{P+1}.
\end{equation}
In particular, we have that
\begin{equation}
    [\delta_{\mathrm{KPM}}^P(E,\widehat{H}_q^{(\tau, W, L)})]_{0\alpha}^{0\alpha}\approx[\delta_\epsilon(E-\widehat{H}_q^{(\tau, W, L)})]_{0\alpha}^{0\alpha}\quad\text{where}\quad \epsilon\approx\frac{\pi}{sP}.
\end{equation}
In a more recent paper \cite{yi_high-order_2025}, a high-order regularized delta-Chebyshev method for computing spectral densities is proposed. In particular, they show that the Jackson kernel KPM is $O(P^{-2})$ if the density of states is twice continuously differentiable, and their regularized kernel is $O(P^{-m})$.

With regard to the KPM, the final computables are given by 
\begin{equation}
    \widehat{\mathcal{D}}_{P}^{(\tau, W,L)}(q,E) = \sum_{j=1}^3\frac{1}{|\mathcal{A}_j|}\sum_{\alpha\in\mathcal{A}_j}[\delta_{\mathrm{KPM}}^P(E,\widehat{H}_q^{(\tau,W,L)})]_{0\alpha}^{0\alpha}
\end{equation}
and
\begin{equation}
    \widehat{\mathcal{D}}_P^{(\tau, W, L, N)}(E) = \nu^*\sum_{j=1}^3\frac{|\Gamma_{m(j)}^*|}{N^2}\sum_{\widetilde{K}\in\{K_j,K_j'\}}\sum_{\substack{q\in\mathfrak{S}_j(N,\widetilde{K})\\G\in\mathcal{R}_{m(j)}^*\cap B_W(0)}}[\delta_{KPM}^P(E,\widehat{H}_q^{(\tau, W, L)})]_{G\alpha}^{G\alpha}
\end{equation}
where
\begin{equation}
    m(j) = 
    \begin{cases}
        12, & j = 1,\\
        12\;\operatorname{OR}\;23, & j=2,\\
        23, & j = 3.
    \end{cases}
\end{equation}
and
\begin{equation}
    \mathfrak{S}_j(N,\widetilde{K}) = \widetilde{K}+N^{-1}B_{(m_j)}[0, N-1)^2.
\end{equation}
For the total DoS discretization scheme, we are exploiting the symmetry for $G\alpha \in \Omega_j^*$ for $G_l=G_k = 0$,
\begin{equation}
    f(\widehat{H}_q^{(\tau,W,L)})]_{G\alpha}^{G\alpha} = [f(\widehat{H}^{(\tau,W,L)}_{q+G_{j k}})]_{0\alpha}^{0\alpha}
\end{equation}
where $G_{j k} = (B_j-B_k)n$ for $|k - j| = 1$ and $B_jn = G_j$.

We now provide an abbreviated algorithmic overview of the process defined throughout this section:

\noindent\makebox[\textwidth][c]{
\begin{minipage}{0.75\textwidth}
    \begin{algorithm}[H]
    \caption{Computing $\widehat{\mathcal{D}}_{P}^{(\tau,W,L, N)}(E)$}
    \label{alg:dos}
        \begin{algorithmic}[1]
            \State Get geometry for angle triple $(\theta_1,\theta_2,\theta_3)$
            \State Construct degrees of freedom based on $WL$-truncation
            \State Build a Hamiltonian constructor function
            \State Construct the uniform discretization $\mathfrak{S}(N,\widetilde{K})$
            \State Compute $\delta_{\mathrm{KPM}}^P(E, \widehat{H}^{(\tau,W,L)}_q)$ for $q\in\mathfrak{S}(N,\widetilde{K})$.
        \end{algorithmic}
    \end{algorithm}
\end{minipage}
}
\vspace{1em}

\subsection{Full approximation result for the density of states}

Lastly, we present the full accumulated relative error, neglecting the relative error associated with the KPM. 
\begin{theorem}
    \label{thm:cumulative}
     Let $\Sigma\subset\mathbb{R}$ be a bounded energy window. Choose a base truncation radius $W_0$ such that
    \begin{equation}
         W_0\geq \frac{1}{v_f}\left(\sup_{E\in\Sigma}|E|+\frac{8+3\alpha}{2}\left\|\widehat{H}^{\tau}_{\mathrm{inter}}\right\|_2+2\epsilon\right)
    \end{equation}
    where $v_f$ is the Fermi velocity and $\alpha\in\mathbb{R}_+$. For double-incommensurate twisted trilayer graphene, the following accumulated relative error bound holds
    \begin{align}
        |\widehat{\mathcal{D}}_\epsilon(E)-\widehat{\mathcal{D}}_\epsilon^{(\tau,W, L,N)}(E)|&\lesssim \epsilon^{-1}e^{-\widetilde{\rho} N}+\epsilon^{-1}e^{-(\tau-\delta)\min(\gamma_1,\gamma_2)}+\epsilon^{-3}e^{-\eta^2\epsilon^{-2}}\\
        &+\epsilon^{-4}e^{-\ln{\frac{2+\alpha}{2}}\left\lceil\frac{1}{2}\left\lfloor\frac{W-W_0}{2\mathfrak{u}}+1\right\rfloor\right\rceil}+\epsilon^{-1}e^{-\frac{v^2\mathfrak{u}^2}{8\epsilon^2}\left\lfloor\frac{W-W_0}{2\mathfrak{u}}+2\right\rfloor^2}\\
        &+\epsilon^{-3}\gamma_\epsilon^{-1}Le^{-\gamma_\epsilon L}.
    \end{align}
    for all $E\in\Sigma$ where
    \begin{gather}
        \frac{1}{2}\mathfrak{u}\mu\ll \delta_W<\frac{1}{2}\min_{j\in\{1,2,3\}}\|K_j-K_j'\|_2-W,\quad\mu\geq\max_{j\in\{1,2,3\}}\max_{s\in\mathcal{S}_j}\|s\|_2,\\
        \mathfrak{u} = \max_{j\in\{1,2,3\}}\|[(B_k-B_j)B_k^{-1}, (B_l-B_j)B_l^{-1}]\|_2,\quad \max_{j\in\{1,2,3\}}\frac{r_j}{2}\sec\frac{\vartheta_j}{2}<W,\\
        \eta = (2+\alpha)\|\widehat{H}_{\mathrm{inter}}^{\tau}\|_2,\quad \gamma_\epsilon = D\frac{\epsilon}{\ln(\epsilon^{-1})},\\
        \widetilde{\rho} \leq \min\left(\frac{\gamma_1}{2},\frac{\epsilon}{C}\right)\min_{j\in\{1,2,3\}}\|A_j^{-T}\|_2^{-1},
    \end{gather}
    for some $C, D\in R_+$ (the constants $r_j$, $\vartheta_j$, and the set $\mathcal{S}_j$ are defined in \zcref{thm:layerconnect}).
\end{theorem}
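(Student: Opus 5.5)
The bound in \zcref{thm:cumulative} will follow from a telescoping decomposition along the chain of approximations introduced in this section, plus the triangle inequality. Concretely, for $E\in\Sigma$ we write
\begin{align}
    \widehat{\mathcal{D}}_\epsilon(E)-\widehat{\mathcal{D}}_\epsilon^{(\tau,W,L,N)}(E) &= \bigl(\widehat{\mathcal{D}}_\epsilon(E)-\widehat{\mathcal{D}}_\epsilon^{N}(E)\bigr)+\bigl(\widehat{\mathcal{D}}_\epsilon^{N}(E)-\widehat{\mathcal{D}}_\epsilon^{(N,\tau)}(E)\bigr)\\
    &+\bigl(\widehat{\mathcal{D}}_\epsilon^{(N,\tau)}(E)-\widehat{\mathcal{D}}_\epsilon^{(N,\tau,W)}(E)\bigr)+\bigl(\widehat{\mathcal{D}}_\epsilon^{(N,\tau,W)}(E)-\widehat{\mathcal{D}}_\epsilon^{(N,\tau,W,L)}(E)\bigr).
\end{align}
We identify $\widehat{\mathcal{D}}_\epsilon^{(\tau,W,L,N)}$ with $\widehat{\mathcal{D}}_\epsilon^{(N,\tau,W,L)}$, since the KPM error is neglected by hypothesis. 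For the first term we use \zcref{cor:trtofun}, which gives $\widehat{\mathcal{D}}_\epsilon(E)=\mathcal{T}\delta_\epsilon(E-\widehat{H})$. We then apply \zcref{lem:errorN} with $f=\delta_\epsilon(E-\cdot)$, which is entire.

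The only point requiring care in the first term is the choice of contour $\mathcal{C}$. We take $\mathcal{C}$ to be the boundary of a strip around the real axis of half-width proportional to $\epsilon$, enclosing $\sigma(\widehat{H}_q)$ uniformly in $q$; uniformity holds because $\|\widehat{H}_q\|$ is bounded by \zcref{ass:bound}. On this contour $|\delta_\epsilon(E-z)|\lesssim\epsilon^{-1}$, because $|e^{-(E-z)^2/2\epsilon^2}|=e^{-((\Re)^2-(\Im)^2)/2\epsilon^2}$ and $|\Im(E-z)|\lesssim\epsilon$. The distance from $\mathcal{C}$ to the spectrum is then of order $\epsilon$, so the rate is $\widetilde{\rho}\le\min(\gamma_1/2,\epsilon/C)\min_j\|A_j^{-T}\|_2^{-1}$, and the first term contributes $\epsilon^{-1}e^{-\widetilde{\rho}N}$.

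The remaining three terms come directly from earlier results, each applied with the appropriately truncated operator:
\begin{itemize}
    \item the second term is bounded by the $\tau$-truncation lemma (the Estimation Lemma applied to the difference $\widehat{H}_q-\widehat{H}_q^\tau$), giving $\epsilon^{-1}e^{-(\tau-\delta)\min(\gamma_1,\gamma_2)}$;
    \item the third term is bounded by \zcref{lem:errorW}, whose hypotheses on $W_0$, $\mu$, $\mathfrak{u}$, $\delta_W$, $r_j$, $\vartheta_j$ and $\eta$ are exactly those assumed in \zcref{thm:cumulative}, giving the three $W$-dependent terms;
    \item the fourth term is bounded by \zcref{lem:errorL} with $D=C'$, giving $\epsilon^{-3}\gamma_\epsilon^{-1}Le^{-\gamma_\epsilon L}$.
\end{itemize}
Summing the four bounds yields the stated estimate.

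The main obstacle is checking that the hypotheses of each lemma hold simultaneously for the same truncated objects. \zcref{lem:errorW} is stated for $\widehat{H}^\tau$ and uses $\|\widehat{H}^\tau_{\mathrm{inter}}\|_2$, which is consistent with the order of truncation above. \zcref{lem:errorL} is applied to $\widehat{H}^{(\tau,W)}$, whose Combes--Thomas constants must not depend on $N$, $\tau$, or $W$; this holds because $g_\tau\le 1$ and projections do not increase norms or off-diagonal decay. We also need the implicit constants in the $\lesssim$ of each lemma to be uniform in the parameters left free in the other lemmas, so that the sum remains a single bound with $\epsilon$-dependence explicit. If that uniformity failed, the fallback would be to bound the constants by $\sup_q\|\widehat{H}_q\|$, which \zcref{ass:bound} controls independently of all truncation parameters.
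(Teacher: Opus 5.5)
Your proposal is correct and matches the paper's proof: the paper likewise gets the theorem from the triangle inequality over the $N$-discretization, $\tau$-, $W$- and $L$-truncation stages, bounding the four differences by \zcref{lem:errorN} with $f=\delta_\epsilon$ (via \zcref{cor:trtofun}), the $\tau$-truncation lemma, \zcref{lem:errorW} and \zcref{lem:errorL}. One minor correction to your uniformity remark: the off-diagonal decay in $\|G^e-\widehat{G}^e\|_2$ needed for \zcref{lem:errorL} is not simply inherited from $\widehat{H}_q^\tau$ and preserved by projection; it is produced by the $W$-projection (the proof of \zcref{lem:resolventDecay} uses the $W$-truncation condition defining $\mathcal{W}_j^*(q,W_j)$), so the constants are uniform because $W<\omega$ stays bounded.
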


\section{Numerics}
\label{sec:numerics}
In this section, we present converged results for TTG and numerically validate the above convergence rates.
We use the ab initio tight-binding model constructed from DFT simulations as introduced in \cite{fang_electronic_2016} and monolayer terms from \cite{jung_tight-binding_2013}.
 For details on the tight-binding model and how we simplify computations of the interlayer tunneling function Fourier transform, see \ref{app:hop}. We will compare the results of our algorithm to the continuum model from \cite{Zoe2020}. For details on how to relate the two models, see \ref{app:bm}.

 We implement the continuum model alongside our tight-binding model using the same lattice geometry and $WL$-truncation scheme in order to illustrate the importance of the higher order effects inherent in the full momentum space model that are lost by the continuum approximation. We apply a unitary transformation to the momentum space transformed tight-binding model such that under appropriate Taylor expansions of the trilayer BM model, we denote in reciprocal space as $\widehat{H}_q^\text{BM}$, found in \cite{Zoe2020}. In particular, we define the unitary operator $U\in\mathcal{L}(\ell^2(\Omega^*))$ by
\begin{equation}
    U = \bigoplus_{(G,j\alpha)\in\Omega^*}\begin{bmatrix}
        \prod_{t=1}^3e^{iG_t\cdot\tau_{t\alpha}} & 0\\
        0 & \prod_{t=1}^3e^{iG_t\cdot\tau_{t\beta}}
    \end{bmatrix},
\end{equation}
and note that after confinement near a $K$ point,
\begin{equation}
    [\widehat{H}_q^\text{BM}]^W \approx   [U\widehat{H}_qU^*]^W.
\end{equation}
The entries of the RHS are
\begin{equation}
    [U\widehat{H}_qU^*]_{(G',k\beta)}^{(G,j\alpha)} = \prod_{t=1}^3e^{iG_t\cdot\tau_{t\alpha}}\prod_{s=1}^3e^{-iG_s'\cdot\tau_{s\beta}}[\widehat{H}_q]_{(G',k\beta)}^{(G,j\alpha)}
\end{equation}
for all $(G,j\alpha),(G',k\beta)\in\Omega^*$.
\begin{figure}[ht]
    \centering   
    \begin{subfigure}{0.8\textwidth} 
        \centering
        \includegraphics[width=\textwidth, alt = {The local density of states for the continuum approximation at $\theta = \left[-\frac{\sqrt{2}}{2}, 0, \frac{\sqrt{2}}{2}\right]$.}]{lDoSBMA}
        \caption{}
        \label{fig:lDoSBMA}
    \end{subfigure}
    
    \vspace{0.5em} 
    
    \begin{subfigure}{0.8\textwidth}
        \centering
        \includegraphics[width=\textwidth, alt = {The local density of states for the tight-binding model at $\theta = \left[-\frac{\sqrt{2}}{2}, 0, \frac{\sqrt{2}}{2}\right]$.}]{lDoSTBA}
        \caption{}
        \label{fig:lDoSTBA}
    \end{subfigure}
    \caption[For $\theta = \left\lbrack-\frac{\sqrt{3}}{2}, 0, \frac{\sqrt{2}}{2}\right\rbrack$ degrees, we compare the continuum and tight-binding models]{For $\theta = \left\lbrack-\frac{\sqrt{3}}{2}, 0, \frac{\sqrt{2}}{2}\right\rbrack$ degrees, $W = 0.3771$, $L = 30.5555$ and $P = 8000$, we compare the continuum (a) and tight-binding models (b). We observe that concentrations of the local density of states in the continuum model are lost in the tight-binding model.}
    \label{fig:lDoSA}
\end{figure}

We now present several momentum LDoS plots for both our tight-binding model and the continuum model (BM) proposed in \cite{Zoe2020} with all energy units given in eV. Each of these plots is along a line cut through the indicated high symmetry points. Each interval was sampled at $60$ points. Observe that substantial changes are visible. For example at the magic angles in \zcref{fig:lDoSC}, the flat-band profile at $E \approx 0$ changes dramatically. Also note that for the irrationally related angle pairs in \zcref{fig:lDoSA}, the momentum LDoS profile is dramatically different indicating the importance of the higher order effects inherent in the momentum space approach.
\begin{figure}[ht]
    \centering
    \begin{subfigure}{0.8\textwidth}
        \centering
        \includegraphics[width=\textwidth, alt = {The local density of states for the continuum approximation at $\theta = [-1.4, 0, 2.8]$.}]{lDoSBMB}
        \caption{}
        \label{fig:lDoSBMC}
    \end{subfigure}
    
    \vspace{0.5em}
    
    \begin{subfigure}{0.8\textwidth}
    \centering
        \includegraphics[width=\textwidth, alt = {The local density of states for the tight-binding model at $\theta = [-1.4, 0, 2.8]$.}]{lDoSTBB}
        \caption{}
        \label{fig:lDoSTBC}
    \end{subfigure}
    
    \caption[For $\theta = \left\lbrack-1.4, 0, 2.8\right\rbrack$ degrees, we compare the continuum and tight-binding models]{For $\theta = \left[-1.4, 0, 2.8\right]$ degrees, $W = 0.47422$, $L = 19.40479$, $P = 8000$, we compare the continuum (a) and tight-binding models (b). We observe that the mostly flat band near the Fermi energy in the continuum model, has significantly changed in the tight-binding model. While this may still be sufficient for a superconducting phase, it suggests that the flat band is not the core mechanism.}
    \label{fig:lDoSC}
\end{figure}  

We numerically validate the analytical error bounds for $W$, $L$, and $N$ as described in \zcref{thm:cumulative}. In \zcref{fig:ErrorWL}, the relative $W$-error for the LDoS is defined by
\begin{equation}
    \eta_{W_t}^P(q) = \frac{\max_{E\in\Sigma}|\widehat{\mathcal{D}}_{P}^{(\tau,W_t,L)}(q,E)-\widehat{\mathcal{D}}_{P}^{(\tau,W^*,L)}(q,E)|}{\max_{E\in\Sigma}|\widehat{\mathcal{D}}_{P}^{(\tau,W^*,L)}(q,E)|}
\end{equation}
where $(W_t)_{t=1}^{N_W}$ is a sequence of $W$-truncation parameters such that $W_{N_W} < W^*$, $W^*$ is the maximum $W$-truncation considered, and $\Sigma$ is the energy window of interest. We begin by fixing $\tau = 8.5$ and $\delta = 6.5$. Then $L$ is chosen such that
\begin{equation}
    L\propto\frac{W^*}{\min_{j\in\{1,2,3\}}(|\theta_j|)}.
\end{equation}
In particular, we choose the constant of proportionality to be $1$. Next, $W^*$ is chosen such that
\begin{equation}
    W^* = \frac{3}{8}\min_{j\in\{1,2,3\}}\|K_j-K_j'\|_2,
\end{equation}
which is well within the requirements for connectedness. Additionally, $W_1$ is chosen such that
\begin{equation}
    W_1 = \frac{5}{2}\max_{j\in\{1,2,3\}}\frac{r_j}{2}\sec\frac{\vartheta_j}{2}.
\end{equation}

\begin{figure}[ht]
     \centering
     \begin{subfigure}{0.45\textwidth}
         \centering
         \includegraphics[width=\textwidth, alt = {The relative $W$-error for $\theta = \left[-\frac{\sqrt{3}}{2}, 0, \frac{\sqrt{2}}{2}\right]$.}]{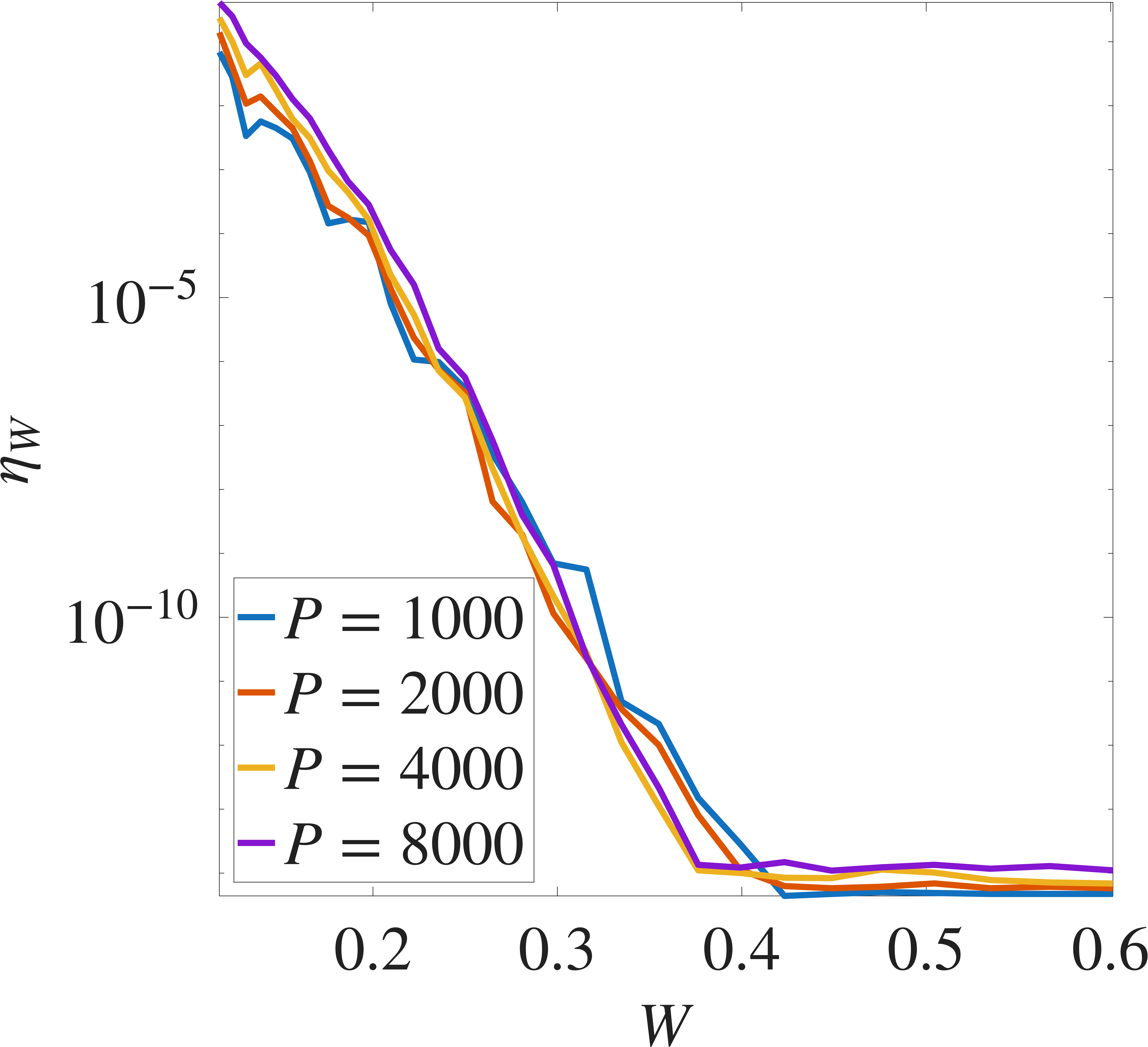}
         \label{fig:AErrorW}
         \caption{}
     \end{subfigure}
     \hfill
     \begin{subfigure}{0.45\textwidth}
         \centering
         \includegraphics[width=\textwidth, alt = {The relative $W$-error for $\theta = [-1.4,0,2.8]$.}]{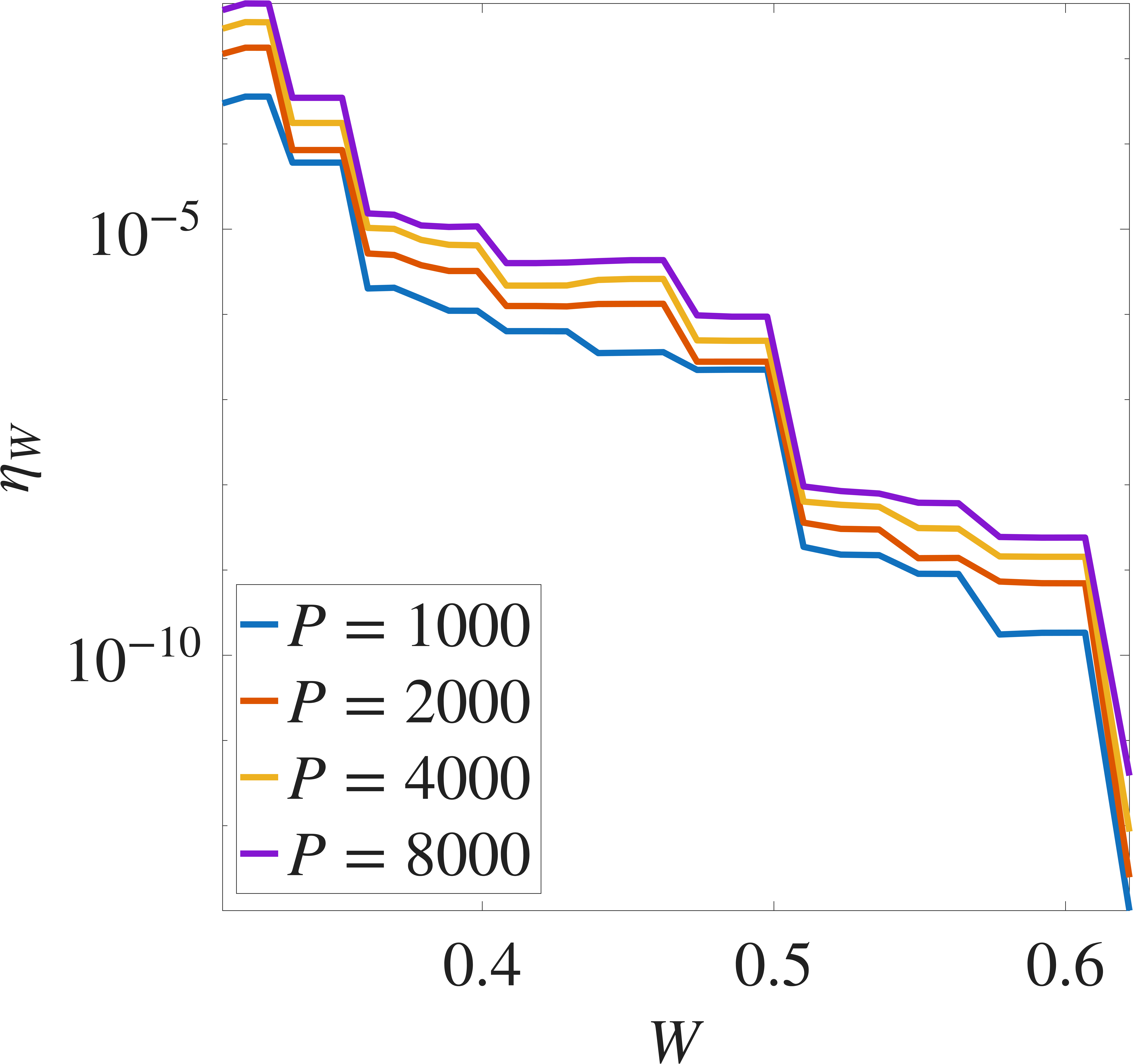}
         \label{fig:BErrorW}
         \caption{}
     \end{subfigure}

    \vspace{0.5em}
     
      \begin{subfigure}{0.45\textwidth}
         \centering
         \includegraphics[width=\textwidth,  alt = {The relative $L$-error for $\theta = [-\frac{\sqrt{3}}{2}, 0, \frac{\sqrt{2}}{2}]$.}]{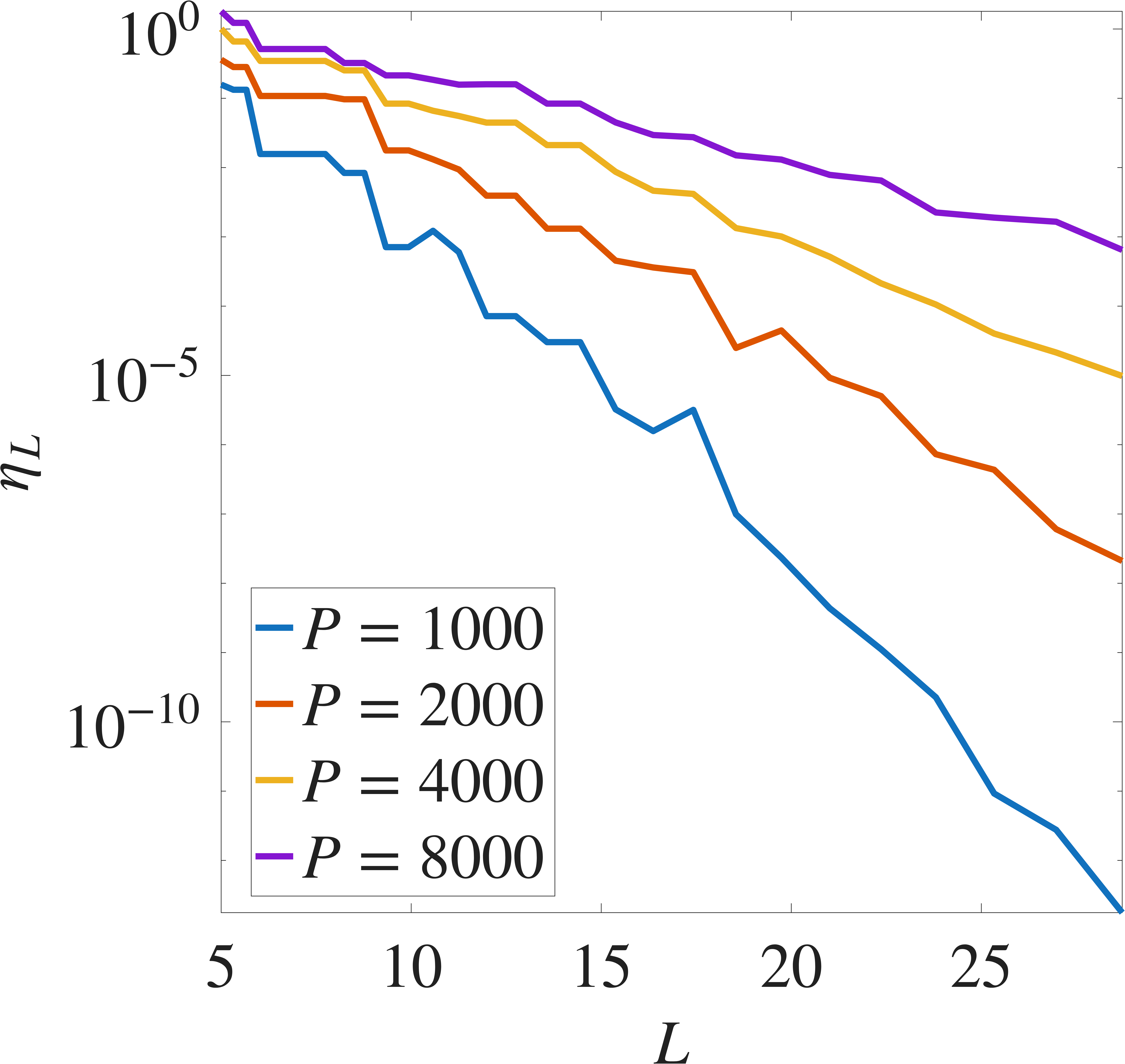}
        \label{Fig:AErrorL}
        \caption{}
     \end{subfigure}
     \hfill
     \begin{subfigure}{0.45\textwidth}
         \centering
         \includegraphics[width=\textwidth,  alt = {The relative $L$-error for $\theta = [-1.4,0,2.8]$.}]{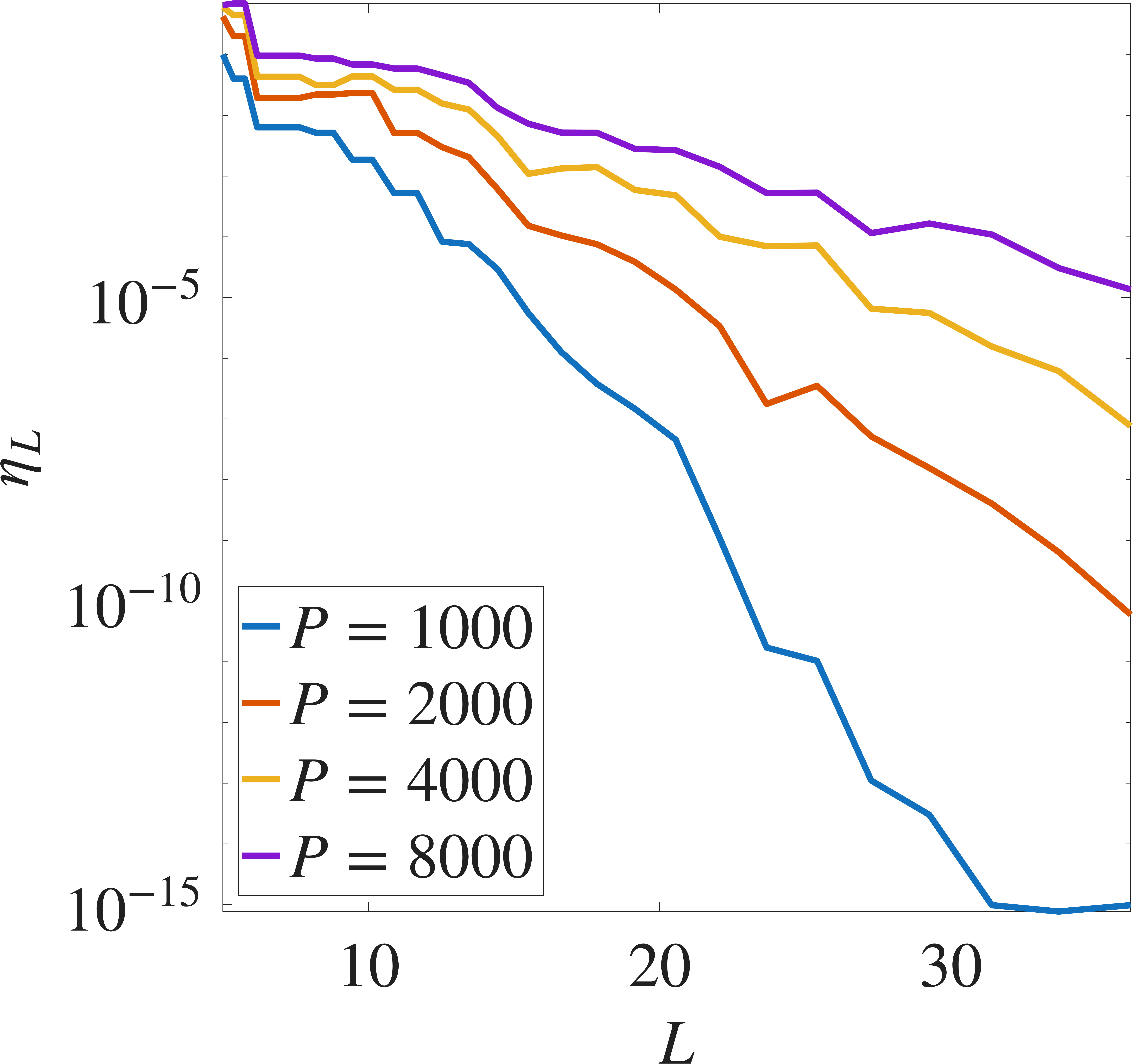}
         \label{fig:BErrorL}
         \caption{}
     \end{subfigure}

    \caption[Four semilogy plots of the relative $W$-error and relative $L$-error]{Semilogy of the relative $W$-error for a range of Chebyshev order $P$: (a) $\theta = \left\lbrack -\frac{\sqrt{3}}{2},0,\frac{\sqrt{2}}{2}\right\rbrack$ at $L = 35.2831$, (b) $\theta = [-1.4,0,2.8]$ at $L = 51.5415$. Semilogy of the relative $L$-error for a range of Chebyshev orders $P$: (c) $\theta = \left\lbrack -\frac{\sqrt{3}}{2},0,\frac{\sqrt{2}}{2}\right\rbrack$ at $W=0.3771$, (d) $\theta = [-1.4,0,2.8]$ at $W = 0.47422$. Exponential convergence is observed for both angles.}
 \label{fig:ErrorWL}
\end{figure}
We only run these simulations for the tight-binding model since the continuum model will converge faster under our scheme. We sample $30$ logarithmically spaced points between $W_1$ and $W^*$. To ease computational costs, we choose $q = K_G$ and restrict ourselves to Layer $1$'s center sites. The energy window is fixed at $\Sigma = [-0.2, 0.2]$, from which $4,000$ linearly spaced points are sampled, giving a resolution of $10^{-4}\unit{eV}$. We choose Chebyshev polynomial orders $P = 1,000\times 2^{(0:3)}$, making sure that the Gaussian width is not smaller than the energy resolution.

In \zcref{fig:ErrorWL}, the relative $L$-error for the LDoS is defined by
\begin{equation}
    \eta_{L_t}^P(q) = \frac{\max_{E\in\Sigma}|\widehat{\mathcal{D}}_{P}^{(\tau,W,L_t)}(q,E)-\widehat{\mathcal{D}}_{P}^{(\tau,W,L^*)}(q,E)|}{\max_{E\in\Sigma}|\widehat{\mathcal{D}}_{P}^{(\tau,W,L^*)}(q,E)|}
\end{equation}
where $(L_t)_{t=1}^{N_L}$ is a sequence of $L$-truncation parameters such that $L_{N_L} < L^*$, $L^*$ is the maximum $L$-truncation considered, and $\Sigma$ is the energy window of interest.
We choose $W$ such that
\begin{equation}
    W = \frac{W_1+W^*}{2}.
\end{equation}
We then choose $L^*$ such that
\begin{equation}
    L^*\propto\frac{W}{\min_{j\in\{1,2,3\}}(|\theta_j|)}.
\end{equation}

     
     

In \zcref{fig:convergence}, the relative $N$-error for the DoS is defined by
\begin{equation}
    \eta_{N_t}^P = \frac{\max_{E\in\Sigma}|\widehat{\mathcal{D}}_P^{(N_t,\tau,W,L)}(E)-\widehat{\mathcal{D}}_P^{(N^*,\tau,W,L)}(E)|}{\max_{E\in\Sigma}|\widehat{\mathcal{D}}_P^{(N^*,\tau,W,L)}(E)|}
\end{equation}
where $(N_t)_{t=1}^{N_N}$ is a sequence of discretization point counts such that $N_{N_N}<N^*$, $N^*$ is the maximum discretization point count considered and $\Sigma$ is the energy window of interest. 

Computing the DoS is expensive, even more so for a range of $q$-discretization. Consequently, we utilized the LSU high performance computing cluster Supermike-III. This is an 8-node cluster with 4 NVIDIA Ampere A100 GPUs ($40$GB) per node. For $\theta = [-1.4,0,2.8]$ and $\theta'=[-\frac{\sqrt{3}}{2}, 0, \frac{\sqrt{2}}{2}]$ with $N\in 3(1:10)$, we created a master list of jobs based on momenta. This list was batched so that each batch utilized $90\%$ of its assigned GPU.

\begin{figure}[ht]
    \centering
    \begin{subfigure}{0.45\textwidth}
        \centering
        \includegraphics[width=\textwidth]{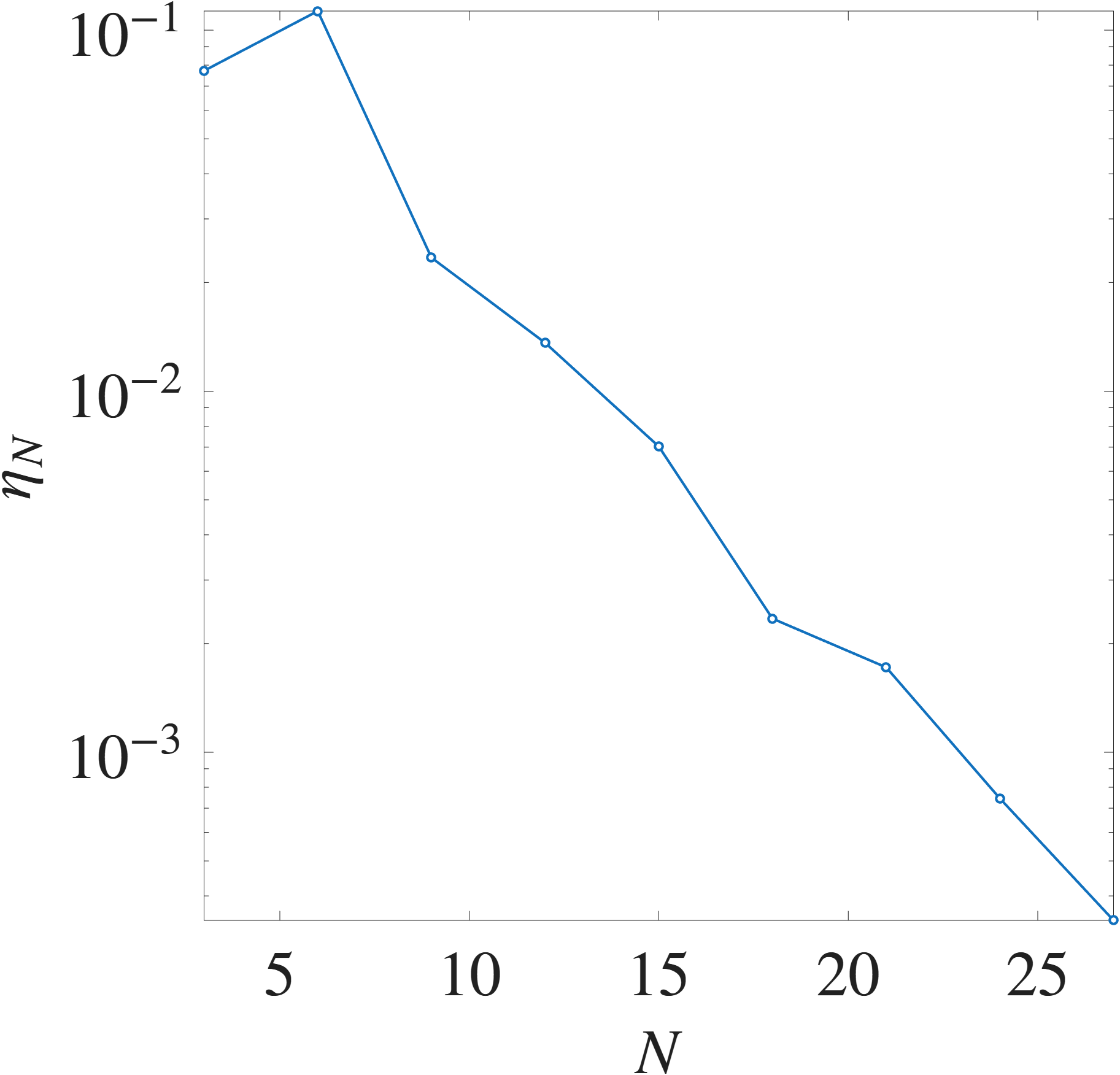}
        \caption{}
        \label{fig:AErrorN}
    \end{subfigure}
    \hfill
    \begin{subfigure}{0.45\textwidth}
        \centering
        \includegraphics[width=\textwidth]{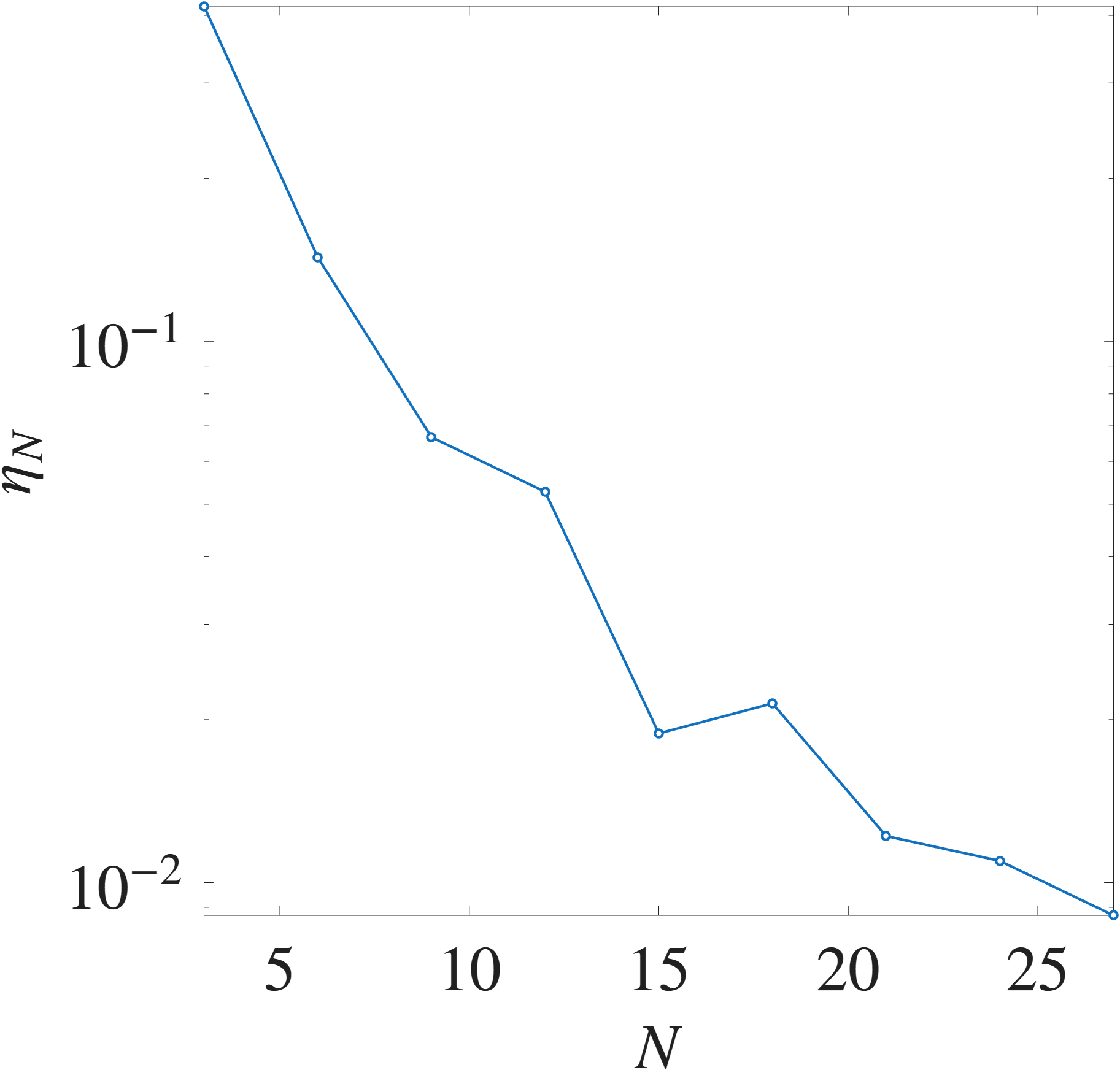}
        \caption{}
        \label{fig:BErrorN}
    \end{subfigure}
    \caption[Two semilogy plots of the relative $N$-error]{Two semilogy plots of the relative $N$-error for $P = 4000$: (a) $\theta = [-\frac{\sqrt{3}}{2},0,\frac{\sqrt{2}}{2}]$ at $W = 0.1743$ and $L = 73.7463$, (b) $\theta = [-1.4, 0, 2.8]$ at $W = 0.4324$ and $L = 73.7463$. Exponential convergence is observed for both angles.}
    \label{fig:convergence}
\end{figure}

In particular, we had $96$ batches processed in $6$ sets of $16$ (the per account limit), with each set completing in approximately $8$ hours, requiring a total of approximately $2.5$ days. In \zcref{fig:convergence}, we show discretization convergence results. In \zcref{fig:dos}, we present the converged total DoS in an energy range of $200$ meV around the Fermi level.

\begin{figure}[ht]
    \centering
    \begin{subfigure}{0.45\textwidth}
        \centering
        \includegraphics[width=\textwidth]{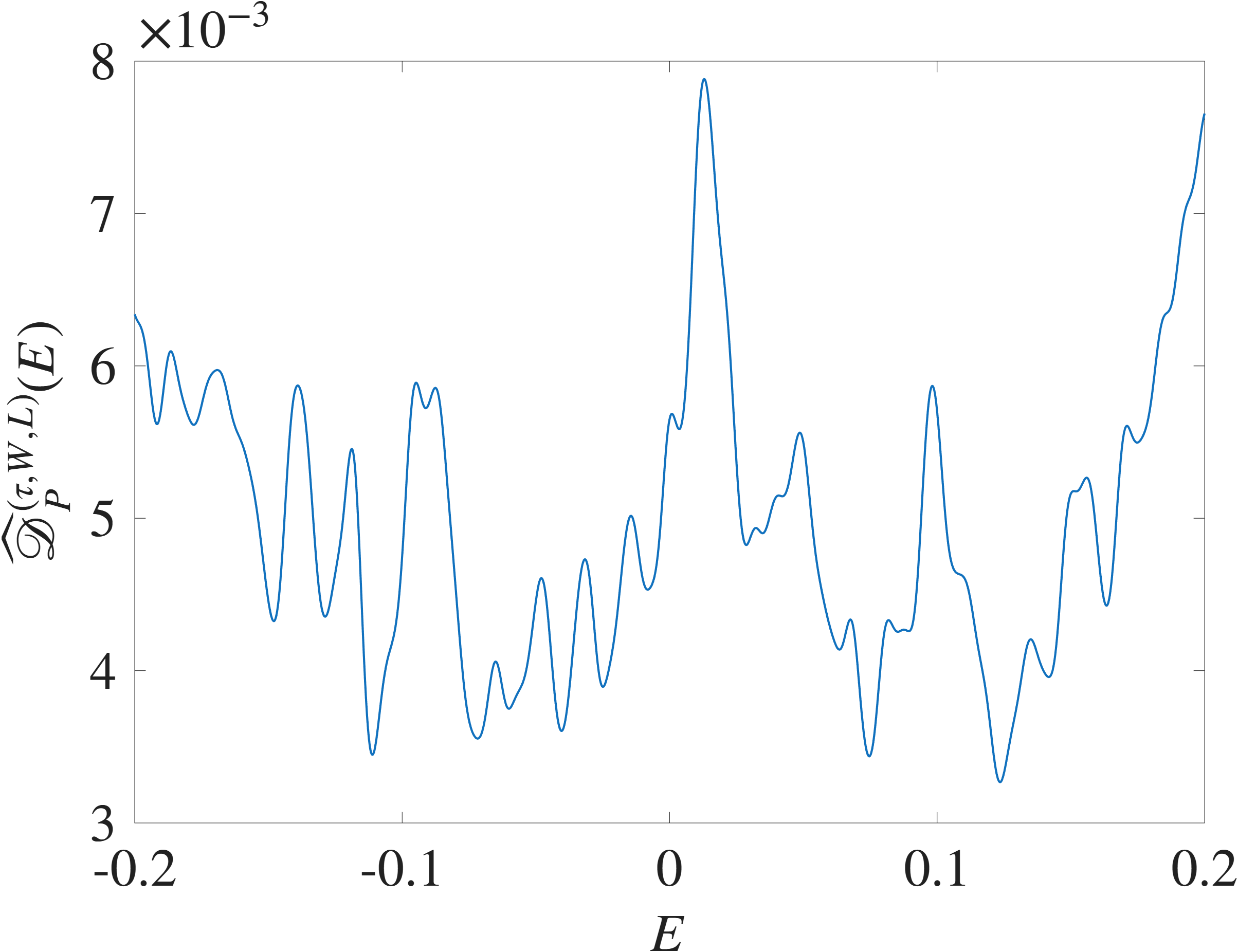}
        \caption{}
        \label{fig:DoSA}
    \end{subfigure}
    \hfill
    \begin{subfigure}{0.45\textwidth}
        \centering
        \includegraphics[width=\textwidth]{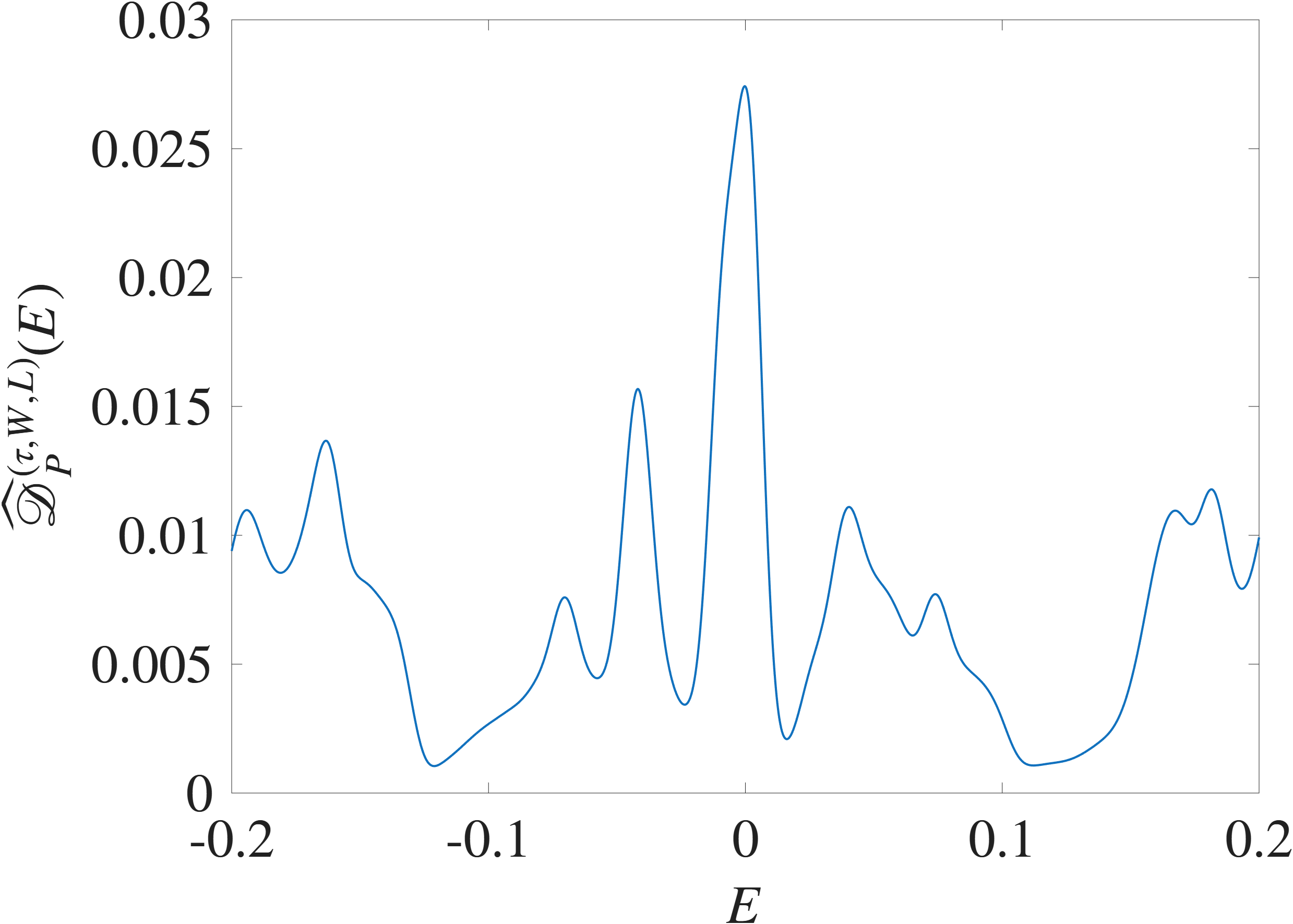}
        \caption{}
        \label{fig:DoSB}
    \end{subfigure}
    \caption[Two DoS plots]{Two DoS plots for $P = 4000$ and $N = 30$: (a) $\theta = [-\frac{\sqrt{3}}{2},0,\frac{\sqrt{2}}{2}]$ at $W = 0.1743$ and $L = 73.7463$, (b) $\theta = [-1.4, 0, 2.8]$ at $W = 0.4324$ and $L = 73.7463$.}
    \label{fig:dos}
\end{figure}

\section{Conclusion}
\label{sec:conclusion}

In conclusion, we presented an approach to transform double-incommensurate tight-binding models into momentum space via a Hamiltonian defined over a four-dimensional reciprocal lattice description describing the hopping through momentum {\em scattering channels.} We found explicit formulas for these hopping coefficients, which avoids errors associated with continuum model approximation. Secondly, we introduced careful truncation schemes by which we localize in momenta near the $K$ and $K'$ points ($W$-truncation), and then a second truncation which controls how many hoppings ($L$-truncation) can happen that stay within a neighborhood of the $K$-points. We derive explicit error bounds, which we numerically verify on an ab initio model of double-incommensurate TTG. We compute  converged momentum LDoS results using the KPM, as the matrices are sparse but too large for explicit diagonalization as can be one in the twisted bilayer case. We present converged momentum LDoS and total DoS results for an ab initio tight-binding model of twisted bilayer graphene, which we compute on an HPC.

A carefully converged single-particle model is essential for predicting electronic properties, predicting flatbands for correlated physics, and giving a roadmap to constructing an effective single-particle reduced order basis for the construction of many-body models. This algorithm gives explicitly a technique to generate converged single-particle results for TTG, but is easily generalized to other double-incommensurate trilayers as long as an appropriate momentum-energy confinement exists such as Dirac cones or parabolic bands with weak interlayer tunneling.

\section{Proofs}
\label{sec:proofs}

\subsection{Proof of \zcref{prop:momentum}}
\begin{proof}
    Let $\alpha\in\mathcal{A}_1$. Without loss of generality, suppose that \zcref{ass:herm} is satisfied. Then
    \begin{align}
        [\mathcal{G} H\psi]_{1\alpha}(q) &= \frac{1}{c_1^*}\sum_{R\in\mathcal{R}_1}e^{-iq\cdot(R+\tau_{1\alpha})}\left(\sum_{(R',1\beta)\in\Omega_1}(h_{1\beta}^{1\alpha})_{R-R'}\psi_{(R',1\beta)}\right.\\
        &\left.+\sum_{(R'',2\gamma)\in\Omega_2}h_{2\gamma}^{1\alpha}(R-R''+\tau_{1\alpha}-\tau_{2\gamma})\psi_{(R'', 2\gamma)}\right).
    \end{align}
    Examining the intralayer sum, we have
    \begin{align}
        S_{\mathrm{intra}} &= \frac{1}{c_1^*}\sum_{R(R',1\beta)}e^{-iq\cdot(R-R'+\tau_{1\alpha}-\tau_{1\beta})}(h_{1\beta}^{1\alpha})_{R-R'}e^{-iq\cdot (R+\tau_{1\beta})}\psi_{(R',1\beta)}\\
        &= \sum_{(R,1\beta)}e^{-iq\cdot(R+\tau_{1\alpha}-\tau_{1\beta})}(h_{1\beta}^{1\alpha})_{R}[\mathcal{G}_1\psi_1]_{\beta}(q)
    \end{align}
    which implies
    \begin{equation}
        \widetilde{H}_{1\beta}^{1\alpha} = \sum_{R\in\mathcal{R}_1}e^{-i(\cdot)\cdot(R+\tau_{1\alpha}-\tau_{1\beta})}[h_{1\beta}^{1\alpha}]_R.
    \end{equation}
    Examining the interlayer sum, we have
    \begin{equation}
        S_{\mathrm{inter}} = \frac{c_2^*}{c_1^*}\sum_{(R'',2\gamma)}\int_{\R^2}e^{i(\xi-q)\cdot(R''+\tau_{1\alpha})}\widehat{t}_{2\gamma}^{1\alpha}(\xi)[\mathcal{G}_2\psi_2]_{\gamma}(\xi)d\xi
    \end{equation}
    where the Poisson summation formula
    \begin{equation}
        \sum_{R}e^{i\xi\cdot R} = (c_1^*)^2\sum_{G}\delta(\xi-G)
    \end{equation}
    gives
    \begin{equation}
        S_{\mathrm{inter}}=c_1^*c_2^*\sum_{(G,2\gamma)}e^{iG\cdot\tau_{1\alpha}}\widehat{t}_{2\gamma}^{1\alpha}(q+G)[\mathcal{G}_2\psi_2]_{\gamma}(q+G)
    \end{equation}
    which implies
    \begin{equation}
        \widetilde{H}_{2\gamma}^{1\alpha} = c_1^*c_2^*\sum_{G\in\mathcal{R}_1^*}e^{iG\cdot\tau_{1\alpha}}\widehat{t}_{2\gamma}^{1\alpha}(\cdot+G)T_G.
    \end{equation}
 From this, we deduce \zcref{eq:momentum}. This completes the proof of \zcref{prop:momentum}. 
\end{proof}

\subsection{Proof of \zcref{lem:ergProd}}
\begin{proof}
    Let $\widehat{\psi},\widehat{\phi},\widehat{\omega}\in\widehat{\mathcal{E}}_{jq}(\chi_j^a)$ and $c_1,c_2\in\C$. First, we prove that $(\cdot,\cdot)_{j,\mathrm{erg}}$ is conjugate symmetric. To that end, we have
    \begin{align}
        \overline{(\widehat{\psi},\widehat{\phi})_{j,\mathrm{erg}}} &= \overline{\lim_{r\to\infty}\frac{1}{|\Omega_{jr}^*|}\sum_{(G,j\alpha)}\overline{\widetilde{\psi}_{(G,j\alpha)}}\widetilde{\phi}_{(G,j\alpha)}} = \lim_{r\to\infty}\frac{1}{|\Omega_{jr}^*|}\sum_{(G,j\alpha)}\overline{\overline{\widetilde{\psi}_{(G,j\alpha)}}\widetilde{\phi}_{(G,j\alpha)}}\\
        &=\lim_{r\to\infty}\frac{1}{|\Omega_{jr}^*|}\sum_{(G,j\alpha)}\overline{\widetilde{\phi}_{(G,j\alpha)}}\widetilde{\psi}_{(G,j\alpha)}\\
        &=(\widehat{\phi},\widehat{\psi})_{j,\mathrm{erg}}.
    \end{align}
    Next, we prove that $(\cdot,\cdot)_{j,\mathrm{erg}}$ is linear in the second argument. To that end, we have
    \begin{align}
        (\widehat{\psi},c_1\widehat{\phi}+c_2\widehat{\omega})_{j,\mathrm{erg}} &= \lim_{r\to\infty}\frac{1}{|\Omega_{jr}^*|}\sum_{(G,j\alpha)}\overline{\widetilde{\psi}_{(G,j\alpha)}}\left(c_1\widetilde{\phi}_{(G,j\alpha)}+c_2\widetilde{\omega}_{(G,j\alpha)}\right)\\
        & = c_1\lim_{r\to\infty}\frac{1}{|\Omega_{jr}^*|}\sum_{(G,j\alpha)}\overline{\widetilde{\psi}_{(G,j\alpha)}}\widetilde{\phi}_{(G,j\alpha)}\\
        &+c_2\lim_{r\to\infty}\frac{1}{|\Omega_{jr}^*|}\sum_{(G,j\alpha)}\overline{\widetilde{\psi}_{(G,j\alpha)}}\widetilde{\omega}_{(G,j\alpha)}\\
        & = c_1(\widehat{\psi},\widehat{\phi})_{j,\mathrm{erg}}+c_2(\widehat{\psi},\widehat{\omega})_{j,\mathrm{erg}}.
    \end{align}
    Lastly, we prove that $(\cdot,\cdot)_{j,\mathrm{erg}}$ is positive definite. Set $\widetilde{\phi} = \widetilde{\psi}$. Then
    \begin{equation}
         (\widehat{\psi},\widehat{\psi})_{j,\mathrm{erg}} = \lim_{r\to\infty}\frac{1}{|\Omega_{jr}^*|}\sum_{(G,j\alpha)}|\widetilde{\psi}_{(G,j\alpha)}|^2\geq 0.
    \end{equation}
    Hence, $(\cdot,\cdot)_{j,\mathrm{erg}}$ is positive semidefinite. It remains to show that
    \begin{equation}
        (\widehat{\psi},\widehat{\psi})_{j,\mathrm{erg}} = 0 \iff \widehat{\psi} = 0.
    \end{equation}
 Since $\widetilde{\psi}\in\chi_j^a$, it follows from Proposition 3.5 in \cite{cancesGeneralizedKuboFormulas2017} and the identity theorem that
    \begin{align}
        (\widehat{\psi},\widehat{\psi})_{j,\mathrm{erg}} &= \lim_{r\to\infty}\frac{1}{|\Omega_{jr}|}\sum_{(G,j\alpha)\in\Omega_{jr}}\left|\psi_{j\alpha}\left(q+\sum_{t=1}^{N}G_t\right)\right|^2dq\\
        &=\sum_{\alpha\in\mathcal{A}_j}\intbar_{\Gamma_j^*}|\psi_{j\alpha}(q)|^2dq = |\Gamma_j^*|^{-1}(\widetilde{\psi},\widetilde{\psi})_{\chi_j^a} = 0 \iff \widetilde{\psi} = 0.
    \end{align}
    This completes the proof of \zcref{lem:ergProd}.
\end{proof}

\subsection{Proof of \zcref{prop:reciprocal}}

\begin{proof}
    Without loss of generality, let $(G,1\alpha)\in\Omega_1^*$ and suppose that \zcref{ass:herm} is satisfied. Then
    \begin{align}
        \left(\mathcal{E}_{q}\widetilde{H}\widetilde{\psi}\right)_{(G, 1\alpha)} &= \sum_{1\beta\in\mathcal{A}_1}\widetilde{h}_{1\beta}^{1\alpha}\left(q+\sum_{t=1}^{3}G_t\right)\widetilde{\psi}_{1\beta}\left(q+\sum_{t=1}^{3}G_t\right)
        \\
        &+\sum_{\substack{G_1\in\mathcal{R}_1^*\\ 2\beta\in\mathcal{A}_2}}e^{iG_1\cdot\tau_{1\alpha}}\widehat{h}_{2\beta}^{1\alpha}\left(q+G_1+\sum_{t=1}^{3}G_t\right)\widetilde{\psi}_{2\beta}\left(q+G_1+\sum_{t=1}^{3}G_t\right)\\
        &= \sum_{(G',1\beta)\in\Omega_1^*}\widetilde{h}_{1\beta}^{1\alpha}\left(q+\sum_{t=1}^{3}G_t\right)\delta_{G'}^{G}\widetilde{\psi}_{1\beta}\left(q+\sum_{t=1}^{3}G_t'\right)
        \\
        &+\sum_{(G'', 2\beta)\in\Omega_2^*}T_{(G'',2\beta)}^{(G,1\alpha)}\widehat{h}_{2\beta}^{1\alpha}\left(q+G_1''+\sum_{t=1}^{3}G_t\right)\delta_{G''\setminus G_1''}^{G\setminus G_2}\widetilde{\psi}_{2\beta}\left(q+\sum_{t=1}^{3}G_t''\right)\\
        & = \sum_{(G', 1\beta)\in\Omega_1^*}\left[\widehat{H}_q\right]_{(G',1\beta)}^{(G, 1\alpha)}\left(\mathcal{E}_{1q}\widetilde{\psi}_1\right)_{(G',1\beta)}\\
        &+\sum_{(G'',2\beta)\in\Omega_2^*}\left[\widehat{H}_q\right]_{(G'', 2\beta)}^{(G, 1\alpha)}\left(\mathcal{E}_{2q}\widetilde{\psi}_2\right)_{(G'',2\beta)}
    \end{align}
    where in the interlayer term we have used quasi-periodicity to pull $e^{-iG_2\cdot\tau_{2\beta}}$ out of $\widetilde{\psi}_{2\beta}$. The remaining entries are deduced from the above structure. This completes the proof of \zcref{prop:reciprocal}.
\end{proof}

To prove \zcref{thm:equivtrace}, we first require a lemma on the intertwining of the resolvents.
\begin{lemma}
     \label{lem:resIntertwin}
    Under \zcref{ass:bound,ass:incom}, the intertwining relation
    \begin{equation}
        \mathcal{E}_qR_z(\widetilde{H}) = R_z(\widehat{H}_q)\mathcal{E}_q
    \end{equation}
    holds for all $z\not\in\sigma(\widetilde{H})\cup\sigma(\widehat{H})$.
\end{lemma}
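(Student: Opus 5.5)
The plan is to derive the resolvent intertwining directly from the Hamiltonian intertwining in \zcref{prop:reciprocal}, using invertibility of $z-\widehat{H}_q$ on the range $\mathcal{E}_q(\chi^a)$. Fix $z\notin\sigma(\widetilde{H})\cup\sigma(\widehat{H})$ and $\widetilde{\psi}\in\chi^a$. Set $\widetilde{\phi}=R_z(\widetilde{H})\widetilde{\psi}=(z-\widetilde{H})^{-1}\widetilde{\psi}$. The first step is to check that $\widetilde{\phi}$ again lies in $\chi^a$, so that $\mathcal{E}_q\widetilde{\phi}$ is defined and \zcref{prop:reciprocal} applies to it.

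For the momentum Hamiltonian \zcref{eq:momentum}, the entries are analytic in a strip by \zcref{ass:bound}: the intralayer symbols $\widetilde{h}^{j\alpha}_{j\beta}$ are analytic, and the interlayer terms are sums of the shifted analytic functions $\widehat{h}(\cdot+G)$. So $\widetilde{H}$ maps $\chi^a$ into itself, and I would show the same for $R_z(\widetilde{H})$. For $|z|>\|\widetilde{H}\|$ this follows from the Neumann series $R_z=\sum_n z^{-n-1}\widetilde{H}^n$, which converges on the analytic functions in a strip with a slightly reduced width, using the exponential decay of $\widehat{h}$ to sum over $G$. For general $z$ in the resolvent set, I would extend by the resolvent identity and analytic continuation in $z$, or alternatively use a Combes--Thomas type argument on a complex-shifted version $\widetilde{H}(q+i\eta)$ for small $\eta$.

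With $\widetilde{\phi}\in\chi^a$, \zcref{prop:reciprocal} gives $\mathcal{E}_q(z-\widetilde{H})\widetilde{\phi}=(z-\widehat{H}_q)\mathcal{E}_q\widetilde{\phi}$, that is, $\mathcal{E}_q\widetilde{\psi}=(z-\widehat{H}_q)\mathcal{E}_q R_z(\widetilde{H})\widetilde{\psi}$. Applying $R_z(\widehat{H}_q)$ to both sides yields the claim, provided $R_z(\widehat{H}_q)$ is well defined on $\mathcal{E}_q(\chi^a)$. Here I would regard $\widehat{H}_q$ as a bounded operator on $\ell^\infty(\Omega^*)$, or on the completion of $\mathcal{E}_q(\chi^a)$ under the ergodic inner product of \zcref{lem:ergProd}. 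Its boundedness follows from the summable decay of $\widehat{h}$ in \zcref{eq:reciprocal}. I also need that $z\notin\sigma(\widehat{H})$ makes $z-\widehat{H}_q$ injective on that space, so the left inverse is the genuine resolvent.

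The main obstacle is making these function-space details consistent. The relevant points are that $\mathcal{E}_q$ is an isometry up to $|\Gamma_j^*|^{-1/2}$ by \zcref{lem:ergProd}, and that $\widehat{H}_q$ restricted to $\mathcal{E}_q(\chi^a)$ is unitarily similar to $\widetilde{H}$ via $\mathcal{E}_q$. Given these, $\sigma(\widehat{H}_q|_{\mathcal{E}_q\chi^a})\subset\sigma(\widetilde{H})$ and the identity extends by density and continuity. I expect the preservation of analyticity under the resolvent to be the hardest step. I would handle it by first proving the identity for large $|z|$ termwise through Neumann series, where it reduces to $\mathcal{E}_q\widetilde{H}^n=\widehat{H}_q^n\mathcal{E}_q$ by induction from \zcref{prop:reciprocal}. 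I would then extend to all admissible $z$ by analytic continuation: both sides are analytic operator-valued functions of $z$ on the connected component of the common resolvent set, and the boundedness of both Hamiltonians makes that component contain a neighborhood of infinity. For components not reaching infinity, I would use the ergodic-isometry similarity argument above.
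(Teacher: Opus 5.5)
Your central step is exactly the paper's proof: starting from $\mathcal{E}_q(z-\widetilde{H})=(z-\widehat{H}_q)\mathcal{E}_q$ (from \zcref{prop:reciprocal}), you compose with $R_z(\widetilde{H})$ on the right and $R_z(\widehat{H}_q)$ on the left. Your additional points are domain issues that the paper's two-line argument silently takes for granted: that $R_z(\widetilde{H})$ must map $\chi^a$ into itself so that $\mathcal{E}_q R_z(\widetilde{H})$ is defined, and that $R_z(\widehat{H}_q)$ must be a genuine inverse on a space containing $\mathcal{E}_q(\chi^a)$; for the first, your Combes--Thomas shift $q\mapsto q+i\eta$ with $|\eta|$ small relative to $\operatorname{dist}(z,\sigma(\widetilde{H}))$ is what delivers it, which makes the Neumann-series and analytic-continuation detours unnecessary.
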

\begin{proof}
    By Proposition 3.2, we have that $\mathcal{E}_{q}\widetilde{H} = \widehat{H}_q\mathcal{E}_q$. Let $z\in\C\setminus [\sigma(\widetilde{H})\cup\sigma(\widehat{H})]$. Then
    \begin{equation}
        \mathcal{E}_q(z-\widetilde{H})=(z-\widehat{H}_q)\mathcal{E}_q.
    \end{equation}
    Multiplying on right by $R_z(\widetilde{H})$ and on the left by $R_z(\widehat{H}_q)$ yields
    \begin{equation}
        R_z(\widehat{H})\mathcal{E}_q(z-\widetilde{H})R_z(\widetilde{H}) = R_z(\widehat{H})(z-\widehat{H}_q)\mathcal{E}_qR_z(\widetilde{H})
    \end{equation}
    or equivalently
    \begin{equation}
        R_z(\widehat{H})\mathcal{E}_q=\mathcal{E}_qR_z(\widetilde{H}).
    \end{equation}
\end{proof}
We are now ready to prove the equivalence of the thermodynamic limit traces and the complex linear functional.
\begin{proof}
    Observe that
    \begin{equation}
        \nu^*|\Gamma_j^*| = \lim_{r\to\infty}\frac{|\mathcal{R}_{jr}|}{|\Omega_{r}|}\quad\text{and}\quad
        \nu^*|\Gamma_j^*| = \lim_{r\to\infty}\frac{|\kappa_{jr}^*|}{|\Omega_{r}^*|}.
    \end{equation}
    Suppose $H$ satisfies \zcref{ass:bound,ass:incom} and let $f\in\mathcal{O}(\widehat{H})$. First, we prove that $\mathcal{T} f(\widehat{H}) = \underline{\operatorname{Tr}} f(H)$. Starting on the right hand side, we have
    \begin{align}
        \underline{\operatorname{Tr}} \; f(H) &= \lim_{r\to\infty}\frac{1}{|\Omega_r|}\sum_{(R,j\alpha)\in\Omega_r}[f(H)]_{(R,j\alpha)}^{(R,j\alpha)}\\
        &= \lim_{r\to\infty}\frac{1}{|\Omega_r|}\sum_{(R,j\alpha)\in\Omega_r}\oint_{C}f(z)[R_z(H)]_{(R,j\alpha)}^{(R,j\alpha)}dz.
    \end{align}
    Let $F_r:\mathbb{C}\to\mathbb{C}$ be defined by
    \begin{equation}
        F_r(z) = \frac{1}{|\Omega_r|}\sum_{(R,j\alpha)\in\Omega_r}[R_z(H)]_{(R,j\alpha)}^{(R,j\alpha)}.
    \end{equation}
    Then
    \begin{gather}
        |F_r(z)|\leq\frac{1}{|\Omega_r|}\sum_{(R,j\alpha)\in\Omega_r}\left|[R_z(H)]_{(R,j\alpha)}^{(R,j\alpha)}\right|\leq \frac{1}{|\Omega_r|}\sum_{(R,j\alpha)\in\Omega_r}\|R_z(H)\|_2\\
        \leq \frac{1}{|\Omega_r|}\sum_{(R,j\alpha)\in\Omega_r}M = M
    \end{gather}
    for some $M\in\mathbb{R}_+$. Hence, the function $g(z) = M|f(z)|$ dominates. By the Dominated Convergence Theorem, the limit can be brought inside the contour so that
    \begin{equation}
        \underline{\operatorname{Tr}} f(H) = \frac{1}{2\pi i}\oint_Cf(z)\underline{\operatorname{Tr}} \; R_z(H)dz.
    \end{equation}
    By \zcref{lem:ergProd} we have
    \begin{equation}
        |\Gamma_j^*|^{-1}(\psi,\phi)_{\chi_j} = \underline{\operatorname{Tr}}_{\Omega_j^*}(\mathcal{E}_{j0}\psi)^*(\mathcal{E}_{j0}\phi).
    \end{equation}
    By \zcref{lem:ergProd} and \zcref{lem:resIntertwin}, we have that
    \begin{align}
        [R_z(H)]_{(R,j\alpha)}^{(R,j\alpha)} &= [\mathcal{G}_j^*R_z(\widetilde{H})\mathcal{G}_j]_{(R,j\alpha)}^{(R,j\alpha)}\\
        &= |\Gamma_j^*|^{-1}\left(e_{j\alpha}e^{-iq\cdot (R+\tau_{j\alpha})}, R_z(\widetilde{H})e_{j\alpha}e^{-iq\cdot (R+\tau_{j\alpha})}\right)_{\chi_j}\\
        &= \underline{\operatorname{Tr}}_{\Omega_j^*}[\mathcal{E}_{j0}e_{j\alpha} e^{-iq\cdot(R+\tau_{j\alpha})}]^*[\mathcal{E}_{j0}R_z(\widetilde{H})e_{j\alpha} e^{-iq\cdot(R+\tau_{j\alpha})}]\\
        &= \underline{\operatorname{Tr}}_{\Omega_j^*}[\mathcal{E}_{j0}e_{j\alpha} e^{-iq\cdot(R+\tau_{j\alpha})}]^*[R_z(\widehat{H}_0)\mathcal{E}_{j0}e_{j\alpha} e^{-iq\cdot(R+\tau_{j\alpha})}]\\
        &= \lim_{r\to\infty}\frac{1}{|\kappa_{jr}^*|}\sum_{G\in\kappa_{jr}^*}\sum_{G'\in\kappa_j^*}[R_z(\widehat{H}_0)]_{(G,j\alpha)}^{(G',j\alpha)}\prod_{t\neq j}e^{iR\cdot (G_t-G_t')},
    \end{align}
    for all $(R,j\alpha)\in\Omega_j$. By ergodicity (see Proposition 3.5 in \cite{cancesGeneralizedKuboFormulas2017}), we have
    \begin{equation}
        \lim_{r\to\infty}\frac{1}{|\kappa_{jr}^*|}\sum_{G'\in\kappa_{jr}^*}[R_z(\widehat{H}_0)]_{(G,j\alpha)}^{(G',j\alpha)}\prod_{t\neq j}e^{-iR\cdot (G_t-G_t')} = \intbar_{\Gamma_j^*}[R_z(\widehat{H}_q)]_{(0,j\alpha)}^{(G',j\alpha)}\prod_{t\neq j}e^{-iR\cdot (G_t')}dq
    \end{equation}
    which implies
    \begin{align}
         [R_z(H)]_{(R,j\alpha)}^{(R,j\alpha)} = \sum_{G\in\kappa_j^*}\left(\prod_{t\neq j}e^{-iR\cdot G_t}\right)\intbar_{\Gamma_j^*}[R_z(\widehat{H}_q)]_{(0,j\alpha)}^{(G,j\alpha)}dq.
    \end{align}
    In addition, observe that
    \begin{equation}
        \lim_{r\to\infty}\frac{1}{|\mathcal{R}_{jr}|}\sum_{R_j\in\mathcal{R}_{jr}}\prod_{t\neq j}e^{-iR_j\cdot G_t} = \prod_{t\neq j}\delta_{G_t}^0 = \delta_G^0.
    \end{equation}
    Making the appropriate substitutions, we have
    \begin{gather}
        \underline{\operatorname{Tr}} f(H) = \frac{1}{2\pi i}\oint_Cf(z)\underline{\operatorname{Tr}} \; R_z(H)dz\\
         = \frac{1}{2\pi i}\oint_Cf(z)\left(\lim_{r\to\infty}\frac{1}{|\Omega_r|}\sum_{(R,j\alpha)\in\Omega_r}[R_z(H)]_{(R,j\alpha)}^{(R,j\alpha)}\right)dz\\
         = \frac{\nu^*}{2\pi i}\oint_{C}f(z)\left(\sum_{j\alpha\in\mathcal{A}}|\Gamma_j^*|\lim_{r\to\infty}\frac{1}{|\mathcal{R}_{jr}|}\sum_{R\in\mathcal{R}_{jr}}[R_z(H)]_{(R,j\alpha)}^{(R,j\alpha)}\right)dz\\
         = \frac{\nu^*}{2\pi i}\oint_Cf(z)\left[\sum_{(G,j\alpha)\in\Omega^*}\left(\lim_{r\to\infty}\frac{1}{|\mathcal{R}_{jr}|}\sum_{R\in\mathcal{R}_{jr}}\prod_{t\neq j}e^{-iR\cdot G_t}\right)\int_{\Gamma_j^*}[R_z(\widehat{H}_q)]_{(0,j\alpha)}^{(G,j\alpha)}dq\right]dz\\
         = \frac{\nu^*}{2\pi i}\oint_Cf(z)\left[\sum_{(G,j\alpha)\in\Omega^*}\delta_{G}^0\int_{\Gamma_j^*}[R_z(\widehat{H}_q)]_{(0,j\alpha)}^{(G,j\alpha)}dq\right]dz\\
         = \nu^*\sum_{j\alpha\in\mathcal{A}}\int_{\Gamma_j^*}[f(\widehat{H}_q)]_{(0,j\alpha)}^{(0,j\alpha)}dq = \mathcal{T} f(\widehat{H}).
    \end{gather}
    Next, we prove that $\underline{\operatorname{Tr}} f(\widehat{H}) = \mathcal{T} f(\widehat{H})$. Starting on the left hand side, we have
    \begin{align}
        \underline{\operatorname{Tr}} f(\widehat{H}) & = \lim_{r\to\infty}\frac{1}{|\Omega_r^*|}\sum_{(G,j\alpha)\in\Omega_r^*}[f(\widehat{H})]_{(G,j\alpha)}^{(G,j\alpha)}\\
        & = \lim_{r\to\infty}\frac{1}{|\Omega_r^*|}\sum_{(G,j\alpha)\in\Omega_r^*}\frac{1}{2\pi i}\oint_Cf(z)[R_z(\widehat{H})]_{(G,j\alpha)}^{(G,j\alpha)}dz\\
        & = \nu^*\sum_{j\alpha\in\mathcal{A}}|\Gamma_j^*|\lim_{r\to\infty}\frac{1}{|\kappa_{jr}^*|}\sum_{G\in\kappa_{jr}^*}\frac{1}{2\pi i}\oint_Cf(z)[R_z(\widehat{H})]_{(G,j\alpha)}^{(G,j\alpha)}dz.
    \end{align}
    By ergodicity, we have
    \begin{equation}
        \lim_{r\to\infty}\frac{1}{|\kappa_{jr}^*|}\sum_{G\in\kappa_{jr}^*}[R_z(\widehat{H})]_{(G,j\alpha)}^{(G,j\alpha)}=\intbar_{\Gamma_j^*}[R_z(\widehat{H}_q)]_{(0,j\alpha)}^{(0,j\alpha)}dq.
    \end{equation}
    Substitution and Fubini's theorem give
    \begin{equation}
        \underline{\operatorname{Tr}} f(\widehat{H}) = \nu^*\sum_{j\alpha\in\mathcal{A}}\int_{\Gamma_j^*}[f(\widehat{H}_q)]_{(0,j\alpha)}^{(0,j\alpha)}dq = \mathcal{T} f(\widehat{H}).
    \end{equation}
    This completes the proof of \zcref{thm:equivtrace}. 
\end{proof}

\subsection{Proof of \zcref{thm:layerconnect}}

In order to prove \zcref{thm:layerconnect}, we require the following lemma.
\begin{lemma}
    \label{lem:descend}
    For any $x\in\mathbb{R}^2$ satisfying
    \begin{equation}
        \label{eq:step}
        \|x\|_2>\frac{r_j}{2}\sec{\frac{\vartheta_j}{2}}
    \end{equation}
    there exists at least one step in $p\in P_j$ such that
    \begin{equation}
        \|x+p\|_2<\|x\|_2.
    \end{equation}
\end{lemma}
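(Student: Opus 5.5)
The argument is purely geometric and uses only the angular structure of the step set $P_j = (B_k-B_j)S\cup(B_l-B_j)S$. Since $S=-S$, we have $P_j = -P_j$. This symmetry means the directions of the vectors in $P_j$ are not confined to a half-plane, which is why $\vartheta_j<\pi$ is meaningful; I will assume the nondegenerate case where $P_j$ spans $\mathbb{R}^2$. The plan is to choose a step $p$ pointing roughly opposite to $x$ and show that it shortens $x$.

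\emph{Step 1: a step nearly opposite to $x$.} Consider the direction $-x/\|x\|_2$. Order the vectors of $P_j$ by polar angle. Adjacent vectors are separated by at most $\vartheta_j$, and the wrap-around gap is included in this maximum. Hence the direction $-x$ lies in some angular sector between two adjacent vectors $p_1,p_2$ of width at most $\vartheta_j$. One of them, call it $p$, makes an angle $\phi\le \vartheta_j/2$ with $-x$. Equivalently,
\[
p\cdot x = -\|p\|_2\|x\|_2\cos\phi \le -\|p\|_2\|x\|_2\cos\tfrac{\vartheta_j}{2}.
\]
This quantity is strictly negative because $\vartheta_j<\pi$.

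\emph{Step 2: expanding the norm.} We have
\[
\|x+p\|_2^2 = \|x\|_2^2 + 2p\cdot x + \|p\|_2^2 \le \|x\|_2^2 - \|p\|_2\bigl(2\|x\|_2\cos\tfrac{\vartheta_j}{2} - \|p\|_2\bigr).
\]
Using $\|p\|_2\le r_j$ and the hypothesis \zcref{eq:step}, we get $2\|x\|_2\cos\frac{\vartheta_j}{2} > r_j \ge \|p\|_2$. The bracket is therefore positive, and since $p\neq 0$ it follows that $\|x+p\|_2<\|x\|_2$.

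\emph{Main obstacle.} The delicate point is Step 1, namely justifying that a vector within angle $\vartheta_j/2$ of $-x$ exists. This needs two facts. First, the maximal adjacent gap must include the cyclic gap between the last and first vectors. Second, $P_j$ must contain no zero vector, so that $B_k\neq B_j$ and $B_l\neq B_j$; this holds because the twist angles are distinct. If $\vartheta_j$ were defined without the cyclic gap, I would use the symmetry $P_j=-P_j$, which makes the directions invariant under rotation by $\pi$, to reduce to gaps within $[0,\pi)$. Everything after Step 1 is a direct norm computation.
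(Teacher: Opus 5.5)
Your proposal is correct and follows the same route as the paper: choose a step $p\in P_j$ within angle $\vartheta_j/2$ of $-x$, then expand $\|x+p\|_2^2$ (the paper's law of cosines) using $\|p\|_2\le r_j$ and the hypothesis. Your version is slightly more careful than the paper's in three places: it accounts for the cyclic gap via $P_j=-P_j$, it notes that $p\neq 0$, and it uses only the sufficient direction where the paper loosely writes ``if and only if.''
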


\begin{proof}
    Since $\mathcal{S}_j$ is centrally symmetric and spans $\mathbb{Z}^4$, the set $P_j$ is centrally symmetric, and its angular span partitions $\mathbb{R}^4$. Clearly, $\vartheta_j<\pi$. To step towards the origin from $x$, the ideal trajectory is $-x$. Because $\vartheta_j<\pi$, there exists a $p\in P_j$ such that $\alpha = \angle(p,-x)$ satisfies
    \begin{equation}
        \alpha\leq\frac{\vartheta_j}{2}.
    \end{equation}
    By the law of cosines
    \begin{equation}
        \|x+p\|_2^2 = \|x\|_2^2+\|p\|_2^2-2\|x\|_2\|p\|_2\cos{\alpha}.
    \end{equation}
    The distance to the origin is strictly reduced if and only if
    \begin{equation}
        \|p\|_2<2\|x\|_2\cos{\alpha}
    \end{equation}
    or equivalently
    \begin{equation}
        \|x\|_2>\frac{r_j}{2}\sec{\frac{\vartheta_j}{2}}.
    \end{equation}
\end{proof}

With \zcref{lem:descend} in hand, we proceed with the proof of \zcref{thm:layerconnect}.
\begin{proof}
    Let $x$ and $y$ be such that \zcref{eq:step} is satisfied. Then, by \zcref{lem:descend}, there exist paths $\gamma_x$ and $\gamma_y$ with steps in $P_j$ such that $\gamma_x$ and $\gamma_y$ take $x$ and $y$ to $0$, respectively. Since both paths meet at $0$, the combined path $\gamma = [\gamma_x, \overline{\gamma}_y]$ takes $x$ to $y$ where $\overline{\gamma}_y$ is the reversal of $\gamma_y$. Since each step in $\gamma$ is the image of a step in $\mathcal{S}_j$, we obtain a path in $\mathbb{Z}^4$ with the property that each step is bounded by $\mu_j\geq\max_{s\in\mathcal{S}_j}\|s\|_2$ with respect to the pseudometric $\mathfrak{d}_{jj}$. Therefore, if $\frac{r_j}{2}\sec\frac{\vartheta_j}{2}<W_j$ and $\|q-\widetilde{K}\|_2<W_j$, then $\mathcal{W}_{j}^*(q,W_j)$ is $(\mu_j,\mathfrak{d}_{jj})$-connected relative to $0$.

   Let $G\in\mathcal{W}_j^*(q,W_j)$ and $C_k = I-B_jB_k^{-1}$. Without loss of generality, suppose $\widetilde{K} = K_j$. Then
    \begin{equation}
        \|q-K_j+C_kG_k+C_lG_l\|_2<W_j
    \end{equation}
    which implies $q+C_kG_k+C_lG_l\in B_{W_j}(K_j)\subset\Gamma_j^*$. Hence
    \begin{align}
        \|[q+G_k+G_l]_j-K_j\| &= \|[q+C_kG_k+C_lG_l]_j-K_j\|\\
        &= \|q-K_j+G_k+G_l\|_2<W_j.
    \end{align}
    Therefore, $\mathcal{W}_j^*(q, W_j)\subset\Lambda_{\mu_j d_j}^*(q,W_j)$.

    It remains to prove the inclusion: $\Lambda_{\mu_j d_j}^*(q,W_j)\subset\mathcal{W}(q, W_j)$. Suppose $x\in\Lambda_{\mu_j d_j}^*(q,W_j)\setminus\mathcal{W}(q, W_j)$. Then, there exists a $(\mu_j,\mathfrak{d}_{jj})$-path $\gamma_x = (x_m)_{m=1}^N$ from $x$ to $0$ where $x_m = (G_k^{(m)}, G_l^{(m)})$. Let
    \begin{equation}
        z_m = q+C_kG_k^{(m)}+C_lG_l^{(m)}+G_j^{(m)}\in\Gamma_j^*
    \end{equation}
    for some $G_j^{(m)}\in\mathcal{R}_j^*$. For $x\not\in \mathcal{W}_j^*(q, W_j)$, either
    \begin{enumerate}
        \item[(i)] $[z_1]_j\neq 0$ or
        \item[(ii)] $z_1\in B_{W_j}(K_j')$.
    \end{enumerate}
    For case (i), there exists $m\in\{1,2,\ldots,N\}$ such that $G_j^{(m)}\neq 0$ and $G_j^{(m+1)}=0$. Observe that $\|z_{m}-z_{m+1}\|_2\leq 2W_j$.
    Taking the norm of the difference, we obtain the lower bound
    \begin{align}
        \|z_m-z_{m+1}\|_2\geq|\|G_j^{(m)}\|_2-\|C_k(G_k^{(m)}-G_k^{(m+1)})+C_l(G_l^{(m)}-G_l^{(m+1)})\|_2|.
    \end{align}
    Since $G_{j}^{(m)}$ is a nonzero lattice vector, there exists $g_{\min}>0$ such that $\|G_j^{(m)}\|_2\geq g_{\min}$. Additionally, $\|C_k\|_2, \|C_l\|_2\ll 1$, so that $\|[C_k,C_l]\|_2\ll1$ and
    \begin{equation}
       \|C_k(G_k^{(m)}-G_k^{(m+1)})+C_l(G_l^{(m)}-G_l^{(m+1)})\|_2 = u(\|G_k^{(m)}-G_k^{(m+1)}\|_2+\|G_l^{(m)}-G_l^{(m+1)}\|_2)
    \end{equation}
    for some $0<\mathfrak{u}\ll1$. Since $g_{\min}\gg \mathfrak{u}$ and $\|G_k^{(m)}\oplus G_l^{(m)}-G_k^{(m+1)}\oplus G_l^{(m+1)}\|_2\leq\mu_j$, it follows that
    \begin{equation}
        2W_j\geq g_{\min}-\mathfrak{u}\mu_j>g_{\min}-2\delta_j.
    \end{equation}
    By hypothesis $W_j<\frac{1}{2}\|K_j-K_j'\|_2-\delta_j$, which implies $2W_j<\|K_j-K_j'\|_2-2\delta_j$, so that $2W_j<g_{min}-2\delta_j$ is a contradiction. Thus, case (i) is excluded.
    For case (ii), there exists $m\in\{1,2,\ldots,N\}$ such that $z_m\in B_{W_j}(K_j')$ and $z_{m+1}\in B_{W_j}(K_j)$. Taking the norm of the difference, we obtain
    \begin{align}
        \|z_{m+1}-z_{m}\|_2&\geq\|K_j-K_j'\|_2-\|z_{m+1}-K_j\|_2-\|z_m-K_j'\|_2\\
        &\geq\|K_j-K_j'\|_2-2W_j>2\delta_j.
    \end{align}
    From case (i), we know that $\mathfrak{u}\mu_j\ll 2\delta_j$, which contradicts the existence of a step from $z_{m}$ to $z_{m+1}$. Thus, case (ii) is excluded. Therefore, $\Lambda_{\mu_j d_j}^*(q,W_j)\subset\mathcal{W}_j^*(q, W_j)$. This completes the proof of \zcref{thm:layerconnect}. 
\end{proof}

\subsection{Proof of \zcref{thm:cumulative}}
To prove \zcref{thm:cumulative}, we prove each of \zcref{lem:errorN,lem:errorW,lem:errorL}. We omit a proof of the $\tau$-truncation error since it is a direct consequence of our assumptions and the standard Estimation Lemma from complex analysis. We begin by proving \zcref{lem:errorN}. This requires two lemmas and two corollaries.
Let $f\in\mathcal{O}\left(\widehat{H}\right)$. Define $f_{j\alpha}:\mathbb{R}^2\to\mathbb{C}$ by
\begin{equation}
    f_{j\alpha}(q) = \left[f\left(\widehat{H}_q\right)\right]_{(0,j\alpha)}^{(0,j\alpha)}
\end{equation}
and $g_{j\alpha}:[0,2\pi)^2\to\mathbb{C}$ by
\begin{equation}
    g_{j\alpha}(x) = f_{j\alpha}(A_j^{-T}x).
\end{equation}
We now prove the first lemma and its corollary required to prove \zcref{lem:errorN}.
\begin{lemma}
    \label{lem:trPeriod}
    Under \zcref{ass:bound}, the function $f_{j\alpha}$ is $\mathcal{R}_j^*$-periodic for all $f\in\mathcal{O}\left(\widehat{H}\right)$.
\end{lemma}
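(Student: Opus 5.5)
The plan is to show that shifting the starting momentum $q$ by a layer-$j$ reciprocal vector $G_j\in\mathcal{R}_j^*$ changes $\widehat{H}_q$ only by a unitary conjugation. We need this unitary to be diagonal on the reciprocal degrees of freedom and to fix the index $(0,j\alpha)$ up to a phase. Then $[f(\widehat{H}_{q+G_j})]_{(0,j\alpha)}^{(0,j\alpha)}=[f(\widehat{H}_q)]_{(0,j\alpha)}^{(0,j\alpha)}$ for every $f\in\mathcal{O}(\widehat{H})$. The natural candidate comes from the unfolding operators in \zcref{defn:unfolding}. For $\widetilde{\psi}\in\chi^a$ we have $(\mathcal{E}_{q+G_j}\widetilde{\psi})_{(G,k\beta)}=\widetilde{\psi}_{k\beta}(q+G_j+\sum_t G_t)$, and we treat the cases $k=j$ and $k\neq j$ separately.

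\textbf{Case $k=j$.} Here $G_j$ is absorbed by quasi-periodicity of $\mathcal{G}_j$, producing the phase $e^{-iG_j\cdot\tau_{j\beta}}$.

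\textbf{Case $k\neq j$.} Here $G_j$ is a shift of the $j$-th component of the index, namely $G\mapsto G+G_j e_j$, and this map preserves $\kappa_k^*$.

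So I would define $U_{G_j}$ on $\ell^2(\Omega^*)$ by multiplication by $e^{iG_j\cdot\tau_{j\beta}}$ on $\Omega_j^*$ and by the index shift $G\mapsto G+G_je_j$ on $\Omega_k^*$ for $k\neq j$. This $U_{G_j}$ is unitary, since the shift is a bijection of $\kappa_k^*$. It also satisfies $U_{G_j}\mathcal{E}_{q+G_j}=\mathcal{E}_q$, with the phase conventions adjusted as needed.

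Next I would verify the conjugation identity $\widehat{H}_{q+G_j}=U_{G_j}^*\widehat{H}_qU_{G_j}$ entrywise from the explicit formula \zcref{eq:reciprocal} in \zcref{prop:reciprocal}.
\begin{itemize}
\item \emph{Intralayer blocks.} For $\Omega_j^*$, $\widetilde{h}_{j\beta}^{j\alpha}(q+G_j+G_k+G_l)$ picks up exactly $e^{-iG_j\cdot(\tau_{j\alpha}-\tau_{j\beta})}$ from the definition of $\widetilde{h}$, because $e^{-iG_j\cdot R}=1$ for $R\in\mathcal{R}_j$. The diagonal phases cancel this. For $\Omega_k^*$ with $k\neq j$, the argument $q+G_j+\dots$ is just the shifted index.
\item \emph{Interlayer blocks.} The hopping argument $q+G_j'+G_k+G_l$ is likewise matched by the shift, and the factor $T$ absorbs the remaining phases $e^{iG_j\cdot\tau_{j\alpha}}$.
\end{itemize}
This bookkeeping, with sign conventions in $T_{(G_j',\alpha)}^{(G_k,\beta)}$, is the main obstacle. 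I would first check it for the $(1,2)$ block and then argue by symmetry, using \zcref{ass:herm} to handle the adjoint blocks. An alternative that avoids the entrywise computation is to apply the intertwining of \zcref{prop:reciprocal} with both $q$ and $q+G_j$ and use density of $\mathcal{E}_q(\chi^a)$ in the ergodic sense (\zcref{lem:ergProd}). The direct check is more reliable, though, given the paper's index conventions.

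Given the conjugation identity, the conclusion follows by the holomorphic functional calculus. We have $R_z(\widehat{H}_{q+G_j})=U_{G_j}^*R_z(\widehat{H}_q)U_{G_j}$, so $f(\widehat{H}_{q+G_j})=U_{G_j}^*f(\widehat{H}_q)U_{G_j}$ via the Cauchy integral over a contour $\mathcal{C}$ enclosing the spectrum. The spectrum is the same for both operators by unitary equivalence, so a single contour works, and \zcref{ass:bound} guarantees $\widehat{H}_q$ is bounded. Finally, $U_{G_j}$ maps $e_{(0,j\alpha)}$ to $e^{iG_j\cdot\tau_{j\alpha}}e_{(0,j\alpha)}$, because $0\in\kappa_j^*$ is not shifted. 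The phase cancels in the diagonal entry, giving $f_{j\alpha}(q+G_j)=f_{j\alpha}(q)$, which is the claimed $\mathcal{R}_j^*$-periodicity.
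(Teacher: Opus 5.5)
Your proposal is correct and follows the paper's proof essentially step for step. The paper defines the same unitary $U_{G_m}$: a phase $e^{iG_m\cdot\tau_{m\alpha}}$ on $\Omega_m^*$, and a shift of the $m$-th component of the index on the other layers. It then verifies $U_{G}^*\widehat{H}_qU_{G}=\widehat{H}_{q+G}$ entrywise for the intralayer and interlayer blocks, and passes this through the Cauchy integral so that the phase at the index $(0,j\alpha)$ cancels.
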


\begin{proof}
    Let $f\in\mathcal{O}\left(\widehat{H}\right)$, $C$ be a contour around the spectrum of $\widehat{H}$ and $q\in\mathbb{R}^2$. For each 
    \begin{equation}
        G_m\in\bigcup_{j=1}^{N_l}\mathcal{R}_j^*,
    \end{equation} 
    define the unitary operator $U_{G_m}\in\mathcal{L}\left(\mathcal{H}\right)$ by 
    \begin{equation}
        \left[U_{G_m}\right]_{(G',k\beta)}^{(G, j\alpha)} = \delta_{k\beta}^{j\alpha}e^{i\delta_{j}^mG_m\cdot \tau_{j\alpha}}\prod_{t\neq j}\delta_{G_t'}^{G_t-\delta_{t}^mG_m}
    \end{equation}
    for all $((G,j\alpha), (\widetilde{G},k\beta))\in \Omega_j^*\times\Omega_k^*$. Let $\widetilde{G}\in\mathcal{R}_j^*$. Then, we have that
    \begin{align}          [U^*_{\widetilde{G}}\widehat{H}_qU_{\widetilde{G}}]_{(G',k\beta)}^{(G,j\alpha)} &= \sum_{G''G'''}e^{-i(\widetilde{G}\cdot\tau_{j\alpha}-\delta_k^j\widetilde{G}\cdot\tau_{k\beta})}\delta_{G_t}^{G_t''}\prod_{t\neq k}\delta_{G_t'}^{G_t'''+\delta_t^j\widetilde{G}}[\widehat{H}_{q}]_{(G''',k\beta)}^{(G'',j\alpha)}\\
    &= \sum_{G'''}e^{-i(\widetilde{G}\cdot\tau_{j\alpha}-\delta_k^j\widetilde{G}\cdot\tau_{k\beta})}\prod_{t\neq k}\delta_{G_t'}^{G_t'''+\delta_t^j\widetilde{G}}[\widehat{H}_{q}]_{(G''',k\beta)}^{(G,j\alpha)}
    \end{align}
    where if $j = k$ we have that
    \begin{align}
        [U^*_{\widetilde{G}}\widehat{H}_qU_{\widetilde{G}}]_{(G',j\beta)}^{(G,j\alpha)} &= \sum_{G'''}e^{-i\widetilde{G}\cdot(\tau_{j\alpha}-\tau_{k\beta})}\delta_{G'}^{G'''}[\widehat{H}_{q}]_{(G''',k\beta)}^{(G,j\alpha)} \\&= \sum_{G'''}e^{-i(\widetilde{G}\cdot(\tau_{j\alpha}-\tau_{k\beta})}\delta_{G'}^{G'''}[\widehat{H}_{q}]_{(G''',k\beta)}^{(G,j\alpha)} \\&= e^{-i(\widetilde{G}\cdot(\tau_{j\alpha}-\tau_{k\beta})}[\widehat{H}_{q}]_{(G',k\beta)}^{(G,j\alpha)}\\
        &=[\widehat{H}_{q+G}]_{(G',k\beta)}^{(G,j\alpha)}
    \end{align}
    since $e^{-i\widetilde{G}\cdot R}=1$ for all $R\in\mathcal{R}_j$. If $j\neq k$, then we obtain
    \begin{align}
        [U^*_{\widetilde{G}}\widehat{H}_qU_{\widetilde{G}}]_{(G',k\beta)}^{(G,j\alpha)} &= \sum_{G'''}e^{-i\widetilde{G}\cdot\tau_{j\alpha}}\delta_{G_j'}^{G_j'''+\widetilde{G}}\delta_{G'\setminus G_j'}^{G'''\setminus G_j'''}[\widehat{H}_{q}]_{(G''',k\beta)}^{(G,j\alpha)}\\
        &=e^{-i\widetilde{G}\cdot\tau_{j\alpha}}[\widehat{H}_{q}]_{(G'_j+\widetilde{G},\cdots,k\beta)}^{(G,j\alpha)}\\
        &=e^{-i\widetilde{G}\cdot\tau_{j\alpha}}\delta_{G'\setminus G_j'}^{G\setminus G_k}e^{i[(G_j'+\widetilde{G})\cdot\tau_{j\alpha}-G_k\cdot\tau_{k\beta}]}\hat{h}_{k\beta}^{j\alpha}(q+\widetilde{G}+G_j'+\sum_{t=1}^NG_t)\\
        &=[\widehat{H}_{q+G}]_{(G',k\beta)}^{(G,j\alpha)}.
    \end{align}
    Hence, we have that
    \begin{align}
        f_{j\alpha}(\widehat{H}_{q+G}) &= \frac{1}{2\pi i}\oint_C f(z)[(z-\widehat{H}_{q+G})^{-1}]_{(0,j\alpha)}^{(0,j\alpha)}dz\\
        &=\frac{1}{2\pi i}\oint_C f(z)[(zU^*_GU_G-U_G^*\widehat{H}_{q}U_G)^{-1}]_{(0,j\alpha)}^{(0,j\alpha)}dz\\
        &=\frac{1}{2\pi i}\oint_Cf(z)[U_G^*(z-\widehat{H}_q)^{-1}U_G]_{(0,j\alpha)}^{(0,j\alpha)}dz\\
        &=\frac{1}{2\pi i}\oint_Cf(z)(U_Ge_{(0,j\alpha)})^*R_z(\widehat{H}_q)U_Ge_{(0,j\alpha)}dz\\
        &=\frac{1}{2\pi i}\oint_Cf(z)e^{-iG\cdot\tau_{j\alpha}}e_{(0,j\alpha)}^*R_z(\widehat{H}_q)e^{iG\cdot\tau_{j\alpha}}e_{(0,j\alpha)}dz\\
        &=\frac{1}{2\pi i}\oint_Cf(z)[R_z(\widehat{H}_q)]_{(0,j\alpha)}^{(0,j\alpha)}dz\\
        &=f_{j\alpha}(\widehat{H}_q).
    \end{align}
    Therefore, $f_{j\alpha}$ is $\mathcal{R}_j^*$-periodic.
\end{proof}

Since $f_{j\alpha}$ is periodic and continuous, it is bounded on $\mathbb{R}^2$. We use the properties of $f_{j\alpha}$ to prove the $2\pi$-periodicity of $g_{j\alpha}$.
\begin{corollary}
    \label{cor:g-periodic}
    Under \zcref{ass:bound}, the function $g_{j\alpha}$ is $2\pi$-periodic in each variable for all $f\in\mathcal{O}\left(\widehat{H}\right)$.
\end{corollary}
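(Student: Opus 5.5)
The plan is to reduce the claim directly to \zcref{lem:trPeriod}, using only the definition $g_{j\alpha}(x) = f_{j\alpha}(A_j^{-T}x)$ and the reciprocal-lattice relation $B_j = 2\pi A_j^{-T}$. We view $g_{j\alpha}$ as a function on all of $\mathbb{R}^2$ by the same formula. Fix $x\in\mathbb{R}^2$ and a standard basis vector $e_i$, $i\in\{1,2\}$, and compute
\begin{equation}
    g_{j\alpha}(x+2\pi e_i) = f_{j\alpha}\bigl(A_j^{-T}x + 2\pi A_j^{-T}e_i\bigr) = f_{j\alpha}\bigl(A_j^{-T}x + B_je_i\bigr).
\end{equation}

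Since $B_je_i$ is the $i$th column of $B_j$, it is a basis vector of the layer reciprocal lattice $\mathcal{R}_j^* = B_j\mathbb{Z}^2$. The step to watch is that this uses $B_j = 2\pi A_j^{-T}$ for the rotated layer lattices. That identity holds because $A_j = \mathfrak{R}_{\theta_j}A$ and $\mathfrak{R}_{\theta_j}^{-T} = \mathfrak{R}_{\theta_j}$ (rotations are orthogonal), so $2\pi A_j^{-T} = \mathfrak{R}_{\theta_j}B = B_j$.

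By \zcref{lem:trPeriod}, $f_{j\alpha}$ is $\mathcal{R}_j^*$-periodic. Hence the right-hand side equals $f_{j\alpha}(A_j^{-T}x) = g_{j\alpha}(x)$. Iterating this over integer multiples of $e_1$ and $e_2$ gives $g_{j\alpha}(x+2\pi n) = g_{j\alpha}(x)$ for all $n\in\mathbb{Z}^2$, which is $2\pi$-periodicity in each variable separately.

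There is no real obstacle here; the only point needing care is the normalization. The map $x\mapsto A_j^{-T}x$ sends the lattice $2\pi\mathbb{Z}^2$ exactly onto $\mathcal{R}_j^*$, rather than onto a rescaled copy of it. We would also note, for use in \zcref{lem:errorN}, that continuity and boundedness of $f_{j\alpha}$ carry over to $g_{j\alpha}$. The analytic extension of $f_{j\alpha}$ to a complex strip, which follows from \zcref{ass:bound}, likewise transfers to a strip for $g_{j\alpha}$ whose width is scaled by $\|A_j^{-T}\|_2^{-1}$. This scaling is what produces the factor $\min_j\|A_j^{-T}\|_2^{-1}$ in $\widetilde{\rho}$.
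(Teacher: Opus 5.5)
Your proof is correct and is the same argument as the paper's. You substitute $x+2\pi e_t$, use $2\pi A_j^{-T}e_t = B_je_t\in\mathcal{R}_j^*$, and invoke the $\mathcal{R}_j^*$-periodicity of $f_{j\alpha}$ from \zcref{lem:trPeriod}; your explicit check that $2\pi A_j^{-T}=\mathfrak{R}_{\theta_j}B=B_j$ for the rotated layers, and your extension of $g_{j\alpha}$ to all of $\mathbb{R}^2$, are small clarifications the paper leaves implicit.
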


\begin{proof}
    By \zcref{lem:trPeriod}, we have that $f_{j\alpha}$ is $\mathcal{R}_j^*$ periodic. Let $e_{t}$ be either of the $2$d standard ordered basis vectors and $x\in[0,2\pi)^2$. Then 
    \begin{equation}
        g_{j\alpha}(x+2\pi e_t) = f_{j\alpha}(A_j^{-T}x+B_je_t) = f_{j\alpha}(A_j^{-T})=g_{j\alpha}(x).
    \end{equation}
    Therefore, $g_{j\alpha}$ is $2\pi$-periodic in each variable.
\end{proof}

Next, we prove the second lemma and its corollary required to prove \zcref{lem:errorN}. 
\begin{lemma}
    \label{lem:f-analytic}
    Under \zcref{ass:bound}, the function $f_{j\alpha}$ extends to a function analytic in each variable within the strip 
    \begin{equation}
        \mathcal{S}_{\rho_j} = \{z\in\mathbb{C}^2: \|\operatorname{Im}z\|_2<\rho_j\}
    \end{equation}
    where
    \begin{equation}
        \rho_j \leq \min\left(\frac{\gamma_1}{2}, \frac{\epsilon}{C_j}\right)
    \end{equation}
    for some $C_j\in\mathbb{R}_+$.
\end{lemma}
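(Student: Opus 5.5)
Writing $f_{j\alpha}$ as a Riesz–Dunford integral over a contour $\mathcal{C}$ enclosing $\sigma(\widehat{H})$ at distance $\epsilon$,
\begin{equation*}
    f_{j\alpha}(q) = \frac{1}{2\pi i}\oint_{\mathcal{C}} f(z)\,\bigl[R_z(\widehat{H}_q)\bigr]_{(0,j\alpha)}^{(0,j\alpha)}\,dz ,
\end{equation*}
reduces the lemma to two facts. First, $q\mapsto\widehat{H}_q$ extends to a bounded-operator-valued function analytic in each variable on a complex strip. Second, for $q$ in a slightly thinner strip, $z-\widehat{H}_q$ stays invertible for every $z\in\mathcal{C}$, with a uniform resolvent bound. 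Differentiation under the contour integral, or Morera's theorem applied in each variable, then gives analyticity of $f_{j\alpha}$ on $\mathcal{S}_{\rho_j}$.

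\textbf{Step 1 (analytic extension of the entries).} By \zcref{prop:reciprocal}, each entry of $\widehat{H}_q$ is either an intralayer symbol $\widetilde{h}_{j\beta}^{j\alpha}(q+G_k+G_l)$ or an interlayer term $T\,\hat{h}_{k\beta}^{j\alpha}(q+G_j'+G_k+G_l)$ with a unimodular, $q$-independent prefactor.
\begin{itemize}
    \item For the intralayer symbol, the decay $|(h_{j\beta}^{j\alpha})_R|\lesssim e^{-\gamma_1\|R\|}$ from \zcref{ass:bound} makes the lattice sum converge absolutely for $\|\operatorname{Im} q\|<\gamma_1$. It is uniformly bounded for $\|\operatorname{Im} q\|\le\gamma_1/2$, since the sum $\sum_R e^{-\gamma_1\|R\|/2}$ is finite.
    \item For the interlayer symbol, $\hat{h}(\xi)=\frac{1}{4\pi^2}\int h(x)e^{-i\xi\cdot x}dx$ with $|h(x)|\lesssim e^{-\gamma_1\|x\|}$ extends analytically to $\|\operatorname{Im}\xi\|<\gamma_1$. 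For $\|\operatorname{Im}\xi\|\le\gamma_1/2$ it satisfies $|\hat h(\xi)|\lesssim e^{-\gamma_2\|\operatorname{Re}\xi\|}$ uniformly. I would get this from the Fourier-side decay in \zcref{ass:bound}, via a Paley–Wiener/Phragmén–Lindelöf type interpolation between the real-axis bound and the strip bound, possibly at the cost of a constant.
\end{itemize}
The shift $q\mapsto q+\text{(real vector)}$ does not change $\operatorname{Im} q$, so every entry extends to the common strip $\|\operatorname{Im} q\|<\gamma_1/2$. This is the origin of the $\gamma_1/2$ in $\rho_j$.

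\textbf{Step 2 (operator-norm perturbation).} I would bound $\|\widehat{H}_{q+i\eta}-\widehat{H}_q\|\le C_j\|\eta\|$ for real $q$ and $\|\eta\|\le\gamma_1/2$, using a Schur test (maximum row and column sums). For the diagonal intralayer blocks this comes from the derivative bound $\sum_R\|R\|e^{-\gamma_1\|R\|/2}<\infty$. For the interlayer blocks, each row sums over $G_j'\in\mathcal{R}_j^*$ (respectively over $G_k$), and the bound follows from Cauchy estimates for $\hat h$ on the strip combined with the summable decay $e^{-\gamma_2\|\operatorname{Re}\xi\|}$ over a lattice. The same Schur argument shows $\widehat{H}_q$ depends analytically on $q$ in operator norm, because the entrywise derivative series converge uniformly in the Schur norm.

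\textbf{Step 3 (resolvent and contour integral).} Since $\widehat{H}_q$ is self-adjoint for real $q$ and $\operatorname{dist}(\mathcal{C},\sigma(\widehat{H}))\ge\epsilon$, we have $\|R_z(\widehat{H}_q)\|\le\epsilon^{-1}$ on $\mathcal{C}$. A Neumann series then gives invertibility of $z-\widehat{H}_{q+i\eta}$ whenever $C_j\|\eta\|<\epsilon$, with $\|R_z\|\le 2\epsilon^{-1}$ if we take, say, $\|\eta\|<\epsilon/(2C_j)$; the constant is absorbed into $C_j$. Hence the contour integrand is analytic and uniformly bounded on $\mathcal{S}_{\rho_j}$ whenever $\rho_j\le\min(\gamma_1/2,\epsilon/C_j)$, and $f_{j\alpha}$ is analytic there.

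The main obstacle is Step 2 for the interlayer blocks. One needs a Schur-norm bound over the infinite four-dimensional index set that is uniform in the complexified $q$. This requires the Fourier decay of $\hat h$ to survive in the complex strip, which the paper's assumption only states on the real axis. If the interpolation argument fails there, I would instead shrink the strip to $\min(\gamma_1,\gamma_2)/2$, consistent with the earlier Remark on simultaneous analyticity, and absorb the change into the constants.
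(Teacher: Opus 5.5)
Your proposal follows the paper's proof essentially step for step. Both write $f_{j\alpha}$ as a contour integral, use a Neumann series based on $\|R_z(\widehat{H}_q)\|_2\le\epsilon^{-1}$ on the contour, and bound $\|\widehat{H}_{q+i\zeta}-\widehat{H}_q\|_2\le C_j\|\zeta\|_2$ through $\ell^1$ and $\ell^\infty$ (Schur) estimates on the intralayer and interlayer blocks, which the paper phrases via Riesz--Thorin.

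You are also right that the interlayer estimate needs decay of $\nabla\widehat{h}$ inside the complex strip, which the paper uses without justification. Your harmonic-majorant (Phragm\'en--Lindel\"of) interpolation does supply this decay, but with the exponential rate reduced to roughly $\gamma_2/2$ at $\|\operatorname{Im}\xi\|_2=\gamma_1/2$, not merely a worse constant; this is harmless for the lattice sums. So the fallback of shrinking the strip is unnecessary, and it would not by itself give decay off the real axis anyway.
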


\begin{proof}  
    By \zcref{ass:bound}, $\hat{h}_{k\beta}^{j\alpha}$ extends to a function analytic in each variable within the strip $\mathcal{S}_{\gamma_1}$, and $h_{j\beta}^{j\alpha}$ extends to a function analytic in each variable within the strip $\mathcal{S}_{\gamma_1}$. Hence, extension of $\widehat{H}_q$ is well-defined. Let $q,\zeta\in\mathbb{R}^2$ such that $\|\zeta\|_2<\rho_j$. By the second resolvent identity, we have
    \begin{equation}
        R_z(\widehat{H}_{q+i\zeta}) = R_z(\widehat{H}_q)(I-\Delta_{i\zeta}\widehat{H}_{q}R_z(\widehat{H}_q))^{-1}
    \end{equation}
    where $\Delta_{i\zeta}$ is the forward difference operator, that is,
    \begin{equation}
        \Delta_{i\zeta}\widehat{H}_q = \widehat{H}_{q+i\zeta}-\widehat{H}_q.
    \end{equation}
    It follows that the Neumann series for $R_z(\widehat{H}_{q+i\zeta})$ is given by
    \begin{equation}
        R_z(\widehat{H}_{q+i\zeta}) = R_z(\widehat{H}_q)\sum_{n=0}^\infty[(\Delta_{i\zeta}\widehat{H}_q)R_z(\widehat{H}_q)]^n.
    \end{equation}
    This series converges uniformly for all $z\in\mathbb{C}$ provided
    \begin{equation}
        \|(\Delta_{i\zeta}\widehat{H}_q)R_z(\widehat{H}_q)\|_2<1
    \end{equation}
    in which case $f(\widehat{H}_q)$ is analytic via the Cauchy integral formula. Since $\|R_z(\widehat{H}_q)\|_2\leq\epsilon^{-1}$ on the contour $C$, it suffices to show that
    \begin{equation}
        \|\Delta_{i\zeta}\widehat{H}_q\|<\epsilon.
    \end{equation}
    Let $J_j:\ell^2(\Omega_j^*)\to\ell^2(\Omega^*)$ be the operator extending to $0$ in $\Omega^*\setminus\Omega_j^*$. Define the orthogonal projection $P_j:\ell^2(\Omega^*)\to\ell^2(\Omega^*)$ by $P_j = J_jJ_j^*$. Then $\widehat{H}_q$ has the representation
    \begin{equation}
        \widehat{H}_q = \sum_{t\in\mathbb{Z}}\sum_{j-k = t}\left[P_j\widehat{H}_{q}P_k\right]_{jk}.
    \end{equation}
    For a block diagonal operator matrix $X$, we have
    \begin{equation}
        \|X\|_{2} \leq \max_{j=k}\|X_{jk}\|_2
    \end{equation}
    where $\|X\|_2$ is the $\ell^2\to\ell^2$ operator norm. We denote by $\|X\|_1$ and $\|X\|_\infty$ the $\ell^1\to\ell^1$ and $\ell^{\infty}\to\ell^{\infty}$ operator norms (that is, the supremum absolute column sum and supremum absolute row sum). Observe that a generic operator $X\in\mathcal{L}(\ell^2(\Omega^*))$ acts on
    \begin{equation}
        \bigoplus_{j=1}^{N_l}\ell^2(\Omega_j^*,\mu_j)
    \end{equation}
    where $\mu_j$ is the counting measure on $\Omega_j^*$. Since $\mathcal{A}_j$ is finite for each $j$, $\Omega_j^*$ is countable, which implies that $\mu_j$ is $\sigma$-finite. By \zcref{ass:bound}, the operator $\Delta_{i\zeta}\widehat{H}_q$ is bounded on both $\ell^1(\Omega^*)$ and $\ell^\infty(\Omega^*)$, which share the common dense subspace of finite support sequences. Consequently, the Riesz-Thorin interpolation theorem (Theorem IX.17 in \cite{simon_fourier_2007}) is satisfied. By the triangle inequality, taking the maximum over each diagonal, and invoking Riesz-Thorin interpolation, we have
    \begin{align}
        \|\Delta_{i\zeta}\widehat{H}_q\|_2\\
        &\leq \sum_{t\in\mathbb{Z}}\left\|\sum_{j-k=t}\Delta_{i\zeta}P_j\widehat{H}_qP_k\right\|_2\\
        &\leq \sum_{t\in\mathbb{Z}}\max_{j-k=t}\|\Delta_{i\zeta}P_j\widehat{H}_qP_k\|_2\\
        &\leq \sum_{t\in\mathbb{Z}}\max_{j-k=t}\sqrt{\|\Delta_{i\zeta}P_j\widehat{H}_qP_k\|_1\|[\Delta_{i\zeta}P_j\widehat{H}_qP_k\|_\infty}.
    \end{align}
    Hence, it suffices to bound $\|\Delta_{i\zeta}P_j\widehat{H}_qP_k\|_1$ and $\|\Delta_{i\zeta}P_j\widehat{H}_qP_k\|_\infty$ for each $j-k\in\mathbb{Z}$. Define the total shifted momentum function $Q:\mathbb{R}^2\times\kappa_j^*\times\kappa_k^*\to\mathbb{R}^2$ by
    \begin{equation}
        Q(q,G,G') = q+G_j'+\sum_{t=1}^{N_l}G_t.
    \end{equation} 
    Evaluating the difference $\Delta_{i\zeta}P_j\widehat{H}_qP_k$, we have 
    \begin{gather}
        [\Delta_{i\zeta}P_j\widehat{H}_qP_k]_{(G',k\beta)}^{(G,j\alpha)} = \delta_{k}^j\sum_{R\in\mathcal{R}_j}(h_{j\beta}^{j\alpha})_Re^{-i\left(q+\sum_{t=1}^{N_l}G_t\right)\cdot(R+\tau_{j\alpha}-\tau_{j\beta})}(e^{\zeta\cdot(R+\tau_{j\alpha}-\tau_{j\beta})}-1)\\
            +(1-\delta_k^j)\delta_{(G',k\beta)}^{(G,j\alpha)}T_{(G',k\beta)}^{(G,j\alpha)}\left[\hat{h}_{k\beta}^{j\alpha}\left(Q(q, G, G')+i\zeta\right)-\hat{h}_{k\beta}^{j\alpha}\left(Q(q,G,G')\right)\right]
    \end{gather}
    Examining an intralayer block, we have
    \begin{align}
        \|\Delta_{i\zeta}P_j\widehat{H}_qP_j\|_{1}
        &\lesssim\|\zeta\|_2\max_{j\beta\in\mathcal{A}_j}\sum_{(R,j\alpha)\in\mathcal{R}_j}\|R+\tau_{j\alpha}-\tau_{j\beta}\|_2e^{\|\zeta\|_2\|R+\tau_{j\alpha}-\tau_{j\beta}\|_2-\gamma_1\|R\|_2}\\
        &\lesssim\|\zeta\|_2\max_{j\beta\in\mathcal{A}_j}\sum_{j\alpha\in\mathcal{A}_j}e^{\|\zeta\|_2\|\tau_{j\alpha}-\tau_{j\beta}\|_2}\int_{0}^\infty (r^2+r\|\tau_{j\alpha}-\tau_{j\beta}\|_2)e^{(\|\zeta\|_2-\gamma_1)r}dr\\
        &=\|\zeta\|_2\max_{j\beta\in\mathcal{A}_j}\sum_{j\alpha\in\mathcal{A}_j}\left[\frac{4\pi}{(\gamma_1-\|\zeta\|_2)^3}+\frac{2\pi\|\tau_{j\alpha}-\tau_{j\beta}\|_2}{(\gamma_1-\|\zeta\|_2)^2}\right]e^{\|\zeta\|_2\|\tau_{j\alpha}-\tau_{j\beta}\|_2}\\
        &\lesssim\|\zeta\|_2.
     \end{align}
    Swapping the arguments of the max and the sum, we obtain $\|\Delta_{i\zeta}P_j\widehat{H}_qP_j\|_{\infty}\lesssim\|\zeta\|_2$. Examining an interlayer block, we have
    \begin{align}
        \|\Delta_{i\zeta}P_j\widehat{H}_qP_{j+k}\|_{1} &= \leq\|\zeta\|_2\sup_{(G',(j+k)\beta)\in\Omega_{j+k}^*}\sum_{(G,j\alpha)\in\Omega_j^*}\sup_{t\in(0,1)}\|\nabla \widehat{h}_{(j+k)\beta}^{j\alpha}(Q(q, G, G')+it\zeta)\|_2\\
        &\lesssim \|\zeta\|_2e^{\gamma_2\|\zeta\|_2}\sup_{(G',(j+k)\beta)\in\Omega_{j+k}^*}\sum_{(G,j\alpha)\in\Omega_j^*}e^{-\gamma_2\|Q(q,G,G')\|_2}\\
        &\lesssim \|\zeta\|_2\int_{0}^\infty re^{-\gamma_2r}dr\\
        &\lesssim\|\zeta\|_2.
    \end{align}
    Swapping the arguments of the max and the sum, we obtain $\|\Delta_{i\zeta}P_j\widehat{H}_qP_{j+k}\|_{\infty}\lesssim\|\zeta\|_2$. So altogether, we obtain the bound
    \begin{equation}
        \|\Delta_{i\zeta}\widehat{H}_q\|_2\leq C_j\|\zeta\|_2
    \end{equation}
    for some $C_j\in\mathbb{R}_+$ (which may be computed).
    Since $\|\zeta\|_2<\frac{\epsilon}{C_j}$, it follows that
    \begin{equation}
        \|\Delta_{i\zeta}\widehat{H}_q\|_2<\epsilon.
    \end{equation}
    This completes the proof.
\end{proof}

By the properties of linear transformations, we obtain the following corollary immediately.
\begin{corollary}
    \label{cor:g-analytic}
    Under \zcref{ass:bound}, the function $g_{j\alpha}$ extends to a function analytic in each variable within the strip 
    \begin{equation}
        \mathcal{S}_{\widetilde{\rho}_j} = \{z\in\mathbb{C}^2:\|A_j^{-T}\operatorname{Im}(z)\|_2<\widetilde{\rho}_j\}
    \end{equation}
    where
    \begin{equation}
        \widetilde{\rho}_j\leq\|A_j^{-T}\|_2^{-1}\min\left(\frac{\gamma_1}{2}, \frac{\epsilon}{C_j}\right)
    \end{equation}
    for some $C_j\in\mathbb{R}_+$.
\end{corollary}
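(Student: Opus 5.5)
The plan is to transfer the strip of analyticity from $f_{j\alpha}$ to $g_{j\alpha}$ through the linear change of variables $q = A_j^{-T}x$. Both the definition $g_{j\alpha}(x) = f_{j\alpha}(A_j^{-T}x)$ and the analytic extension respect this map.

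First, by \zcref{lem:f-analytic}, $f_{j\alpha}$ extends analytically in each variable to $\mathcal{S}_{\rho_j}$ with $\rho_j \le \min(\gamma_1/2, \epsilon/C_j)$. Since $A_j^{-T}$ is a real invertible $2\times 2$ matrix, its complexification $z \mapsto A_j^{-T}z$ is an entire linear map $\mathbb{C}^2\to\mathbb{C}^2$. Hence the composition $g_{j\alpha}(z) := f_{j\alpha}(A_j^{-T}z)$ is analytic on the preimage $\{z : A_j^{-T}z \in \mathcal{S}_{\rho_j}\}$. On $\mathbb{R}^2$ this composition agrees with the original $g_{j\alpha}$. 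Separate analyticity in each variable is preserved, because each coordinate of $A_j^{-T}z$ is affine in each $z_t$ with the others fixed. Alternatively one can invoke Hartogs' theorem to pass to joint analyticity and then restrict.

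Next, I identify the preimage. Because $A_j^{-T}$ is real, $\operatorname{Im}(A_j^{-T}z) = A_j^{-T}\operatorname{Im} z$. So the preimage is exactly $\{z : \|A_j^{-T}\operatorname{Im} z\|_2 < \rho_j\}$, which is the set $\mathcal{S}_{\widetilde\rho_j}$ with $\widetilde\rho_j = \rho_j$.

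To obtain the stated constant, note that $\|A_j^{-T}\operatorname{Im} z\|_2 \le \|A_j^{-T}\|_2\,\|\operatorname{Im} z\|_2$. Therefore the smaller Euclidean strip $\{\|\operatorname{Im} z\|_2 < \rho_j \|A_j^{-T}\|_2^{-1}\}$ lies inside the domain of analyticity. Setting $\widetilde\rho_j \le \|A_j^{-T}\|_2^{-1}\min(\gamma_1/2, \epsilon/C_j)$ gives the bound in the statement. This normalized width is the quantity later used for the trapezoidal-rule error in \zcref{lem:errorN}, combined with the $2\pi$-periodicity from \zcref{cor:g-periodic}.

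No step here is a real obstacle, since all the analytic work is done in \zcref{lem:f-analytic}. The only point needing care is the mismatch between the set as literally written, $\|A_j^{-T}\operatorname{Im} z\|_2 < \widetilde\rho_j$, and the scaled constant. I would present the argument as proving analyticity on $\{\|A_j^{-T}\operatorname{Im} z\|_2<\rho_j\}$, which contains the Euclidean strip of width $\|A_j^{-T}\|_2^{-1}\rho_j$. Either reading then yields the claimed bound on $\widetilde\rho_j$.
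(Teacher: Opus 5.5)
Your proposal is correct and follows the same route as the paper, which only says the corollary is immediate ``by the properties of linear transformations''. As the paper intends, you pull back the strip of \zcref{lem:f-analytic} through the real invertible map $A_j^{-T}$ using $\operatorname{Im}(A_j^{-T}z)=A_j^{-T}\operatorname{Im}z$, and then shrink to a Euclidean strip.

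One small correction to your closing remark that ``either reading'' gives the bound. The width $\|A_j^{-T}\|_2^{-1}\rho_j$ is justified for the Euclidean strip $\{\|\operatorname{Im}z\|_2<\|A_j^{-T}\|_2^{-1}\rho_j\}$, which is what the trapezoidal-rule argument in \zcref{lem:errorN} needs. For the set $\{\|A_j^{-T}\operatorname{Im}z\|_2<\|A_j^{-T}\|_2^{-1}\rho_j\}$ as literally written, it would follow from \zcref{lem:f-analytic} only if $\|A_j^{-T}\|_2\ge 1$. Here $\|A_j^{-T}\|_2=\sqrt{2}/a\approx 0.58$, so that reading is not covered by your argument.
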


For $\mathcal{T}f(\widehat{H})$ to converge uniformly under \zcref{ass:bound}, we take the minimum width strip of analyticity $\mathcal{S}_{\widetilde{\rho}}$ where
\begin{equation}
    \tilde{\rho} = \min\left(\frac{\gamma_1}{2},\frac{\epsilon}{C_j}\right)\min_{j\in\{1,2,3\}}\|A_j^{-T}\|_2^{-1}.
\end{equation}

We are now ready to prove \zcref{lem:errorN}.
\begin{proof}
    Let $M_f = \sup_{z\in C}|f(z)|$. Applying the estimation lemma and the triangle inequality, we have
    \begin{gather}
       |\mathcal{T} f(\widehat{H})-\mathcal{T}_{N}f(\widehat{H})| \lesssim M_f\int_0^{2\pi}\left|\int_0^{2\pi}g_{j\alpha}(x_1, x_2)dx_1-\frac{2\pi}{N}\sum_{k_1=0}^{N-1}g_{j\alpha}\left(\frac{2\pi k_1}{N}, x_2\right)\right|dx_2\\
        \left.+\frac{2\pi M_f}{N}\sum_{k_1=0}^{N-1}\left|\int_0^{2\pi}g_{j\alpha}\left(\frac{2\pi k_1}{N}, x2\right)dx_2-\frac{2\pi}{N}\sum_{k_2=0}^{N-1}g_{j\alpha}\left(\frac{2\pi k_1}{N},\frac{2\pi k_2}{N}\right)\right|\right.
    \end{gather}
    By \zcref{cor:g-periodic,cor:g-analytic}, $g_{j\alpha}$ is $2\pi$-periodic in each variable and analytic in the strip $\mathcal{S}_{\widetilde{\rho}_j}$. Hence, by Theorem 9.28 in \cite{kress_numerical_1998}, we obtain the bound
    \begin{equation}
        \int_0^{2\pi}g_{j\alpha}(x_1, x_2)dx_1-\frac{2\pi}{N}\sum_{k_1=0}^{N-1}g_{j\alpha}\left(\frac{2\pi k_1}{N}, x_2\right)\lesssim\frac{1}{e^{\widetilde{\rho}_j N}-1}.
    \end{equation}
    The same argument applies to the second term. Substituting both bounds and taking the minimum over $j$, we obtain
    \begin{equation}
        |\mathcal{T} f(\widehat{H})-\mathcal{T}_{N}f(\widehat{H})|\lesssim \frac{M_f}{e^{\widetilde{\rho} N}-1}\lesssim M_fe^{-\widetilde{\rho} N}.
    \end{equation}
\end{proof}

Next, we prove \zcref{lem:errorW}.
\begin{proof}
    Let $C$ be a contour about $\sigma(\widehat{H})$ such that
    \begin{equation}
        d(z,\sigma(\widehat{H}))\in(\epsilon,2\epsilon)
    \end{equation}
    for all $z\in C$. Decompose $C$ as follows
    \begin{align}
        C_+ &= \{z\in C:\operatorname{Re}(z)\in\Sigma+B_\eta\},\\
        C_- &= C\setminus C_+,
    \end{align}
    where $\Sigma$ is the energy window of interest and 
    \begin{equation}
        \eta = (2+\alpha)\|\widehat{H}^{\tau}_{\mathrm{inter}}\|_2.
    \end{equation}
    Let
    \begin{align}
        \eta_W^+:&=\oint_{C_-}|\delta_\epsilon(E-z)|\|R_z(\widehat{H}^{\tau})-R_z(\widehat{H}^{(\tau,W)})\|_2|dz|\;\;\text{and}\\
        \eta_W^-:&=\oint_{C_+}|\delta_\epsilon(E-z)|\|R_z(\widehat{H}^{\tau})-R_z(\widehat{H}^{(\tau,W)})\|_2|dz|.
    \end{align}
    Then we have that
    \begin{equation}
        \eta_W\lesssim\eta_W^++\eta_W^-.
    \end{equation}
    Examining $\eta_W^-$, we obtain
    \begin{align}
        \eta_W^-&\lesssim\oint_{C_-}|\delta_\epsilon(E-z)|\|R_z(\widehat{H}^{\tau})-R_z(\widehat{H}^{(\tau, W)})\|_2|dz|\\
        &=\oint_{C_-}|\delta_\epsilon(E-z)|\|R_z(\widehat{H}_q^{\tau})(\widehat{H}_q^{(\tau, W)}-\widehat{H}_q^{\tau})R_z(\widehat{H}_q^{(\tau,W)})\|_2|dz|\\
        &\lesssim\epsilon^{-2}\oint_{C_-}|\delta_\epsilon(E-z)||dz|\\
        &\lesssim\epsilon^{-3}e^{-\eta^2\epsilon^{-2}}.
    \end{align}

    Let $0<W_0<W_1<\cdots<W_{N_W-1}<W_{N_W} = W$ be a strictly positive increasing sequence of radii. Define the following sets
    \begin{align}
        U_1 &= \Omega_{W_0}^*(q),\\
        U_k &= \Omega_{W_k}^*(q)\setminus\Omega_{W_{k-1}}^*(q) \;\;\text{for each}\;\;k\in\{1,\ldots,N_W\}\\
        U_\infty & = \Omega^*\setminus\Omega_W^*(q).
    \end{align}
    Additionally, for each $k\in\{0,1,\ldots,N_W,\infty\}$, let $J_k:\ell^2(U_k)\to\ell^2(\Omega^*)$ be the operator that extends a state $\psi\in\ell^2(U_k)$ by $0$ in $\Omega^*\setminus U_k$, that is,
    \begin{equation}
        (J_k\psi)_x = 
        \begin{cases}
            \psi_x, & x\in U_k,\\
            0, & x\in\Omega^*\setminus U_k.
        \end{cases}
    \end{equation}
    Then the adjoint $J_k^*:\ell^2(\Omega^*)\to\ell^2(U_k)$ is the operator that restricts the domain of a state $\psi\in\ell^2(\Omega^*)$ to $U_k$, that is,
    \begin{equation}
        J_k^*\psi = \psi\big\vert_{U_k}.
    \end{equation}
    Additionally, we define the composition of these injections
    $J_{k:k'}:\bigoplus_{j=k}^{k'}\ell^2(U_j)\to\ell^2(\Omega^*)$ by
    \begin{equation}
        J_{k:k'}\psi = \sum_{j=k}^{k'}J_j\psi_j.
    \end{equation}
    Then, the adjoint $J_{k:k'}^*:\ell^2(\Omega^*)\to\bigoplus_{j=k}^{k'}\ell^2(U_j)$ is defined by
    \begin{equation}
        J_{k:k'}^*\psi = \bigoplus_{j=k}^{k'}J_j^*\psi.
    \end{equation}
    We define the entries of the shell decomposition $\mathfrak{h}$ of $\widehat{H}_q^{\tau}$ by
    \begin{equation}
        \mathfrak{h}_{jk} = J_j^*\widehat{H}_q^{\tau}J_k.
    \end{equation}
    Choose $N_W>0$ such that
    \begin{equation}
        W_j = W_0+j\frac{W-W_0}{N_W}
    \end{equation}
    and
    \begin{equation}
        \mathfrak{h}_{jk} = 0
    \end{equation}
    for all $|j-k|>1$. For the sake of brevity, we write $\mathfrak{h}_j = \mathfrak{h}_{jj}$ and $\mathfrak{h}_{j:k} = \mathfrak{h}_{j:k,j:k} = J_{j:k}^*\widehat{H}_q^{\tau}J_{j:k}$. Then $\mathfrak{h}$ is a block tri-diagonal operator matrix with a nearest neighbor structure. Let $m = \left\lfloor\frac{N_W}{2}\right\rfloor$. Now, for the $C_+$ part of the contour, there are two cases:
     \begin{enumerate}
         \item[(i)] $k\leq m$ and
         \item[(ii)] $k>m$.
     \end{enumerate}
     Let $\eta_1$ and $\eta_2$ denote the error for case (i) and case (ii), respectively.
     Then we have that
     \begin{equation}
         \eta_W^+\leq\max(\eta_1,\eta_2).
     \end{equation}
     For case (i), we have
     \begin{equation}
         \eta_1\lesssim\oint_{C+}|\delta_\epsilon(E-z)|\|J_k^*RJ_k-J_k^*R_{0:N_W}J_k\|_2|dz|.
     \end{equation}
     Let $M$ be the block operator matrix defined by
     \begin{equation}
         M = \begin{bmatrix}
             z-\mathfrak{h}_{0:N_W} & -\mathfrak{h}_{0:N_W,\infty}\\
             -\mathfrak{h}_{\infty,0:N_W} & z-\mathfrak{h}_\infty
         \end{bmatrix}.
     \end{equation}
    Then, the Banachiewicz inversion formula yields
     \begin{equation}
         [M^{-1}]_{11} = R_{0:N_W}-R_{0:N_W}\mathfrak{h}_{0:N_W,\infty}J_\infty^*RJ_\infty\mathfrak{h}_{\infty,0:N_W}R_{0:N_W}.
     \end{equation}
     It follows that
     \begin{equation}
         \|J_k^*RJ_k-J_k^*R_{0:N_W}J_k\|_2 = \|J_k^*[M^{-1}]_{11}J_k-J_k^*R_{0:N_W}J_k\|_2\lesssim\epsilon^{-2}\|J_k^*R_{0:N_W}J_{N_W}\|_2.
     \end{equation}
     Define the following sequence of operators
     \begin{align}
         \mathfrak{W}_j &= \mathfrak{h}_{j-1,j}J_j^*R_{j:N_W}^*J_J\;\;\text{for all}\;\;j\in\{k+1,\ldots, N_W\},\\
         \mathfrak{W}_k&=J_k^*R_{0:N_W}J_k,
     \end{align}
     that is, the sequence of $[M^{-1}]_{12}$ as $N_W$ decreases to $k$. Then, we have
     \begin{equation}
         J_k^*R_{0:N_W}J_{N_W} = (-1)^{N_W-k}\mathfrak{W}_k\prod_{j=k+1}^{N_W}\mathfrak{W}_j
     \end{equation}
     which implies
     \begin{equation}
         \|J_k^*R_{0:N_W}J_{N_W}\|_2\lesssim\epsilon^{-1}\prod_{j=k+1}^{N_W}\|\mathfrak{W}_j\|_2.
     \end{equation}
     Observe that
     \begin{equation}
         \|\mathfrak{h}_{j-1, j}\|\leq\left\|\widehat{H}^{\tau}_{\mathrm{inter}}\right\|_2
     \end{equation}
     and
     \begin{align}
         \|J_j^*R_{j:N_W}J_j\|_2&\leq\|R_{j:N_W}\|_2\\
         &\leq\left(d\left(z,\sigma\left(\mathfrak{h}_{j:N_W}^{\mathrm{intra}}\right)\right)-\left\|\widehat{H}_{\mathrm{inter}}^{\tau}\right\|_2\right)^{-1}\\
         &\leq\left(\frac{2+\alpha}{2}\left\|\widehat{H}_{\mathrm{inter}}^{\tau}\right\|_2\right)^{-1}
     \end{align}
     which holds for
     \begin{equation}
         d\left(z,\sigma\left(\mathfrak{h}_{j:N_W}^{\mathrm{intra}}\right)\right)\geq \frac{4+\alpha}{2}\left\|\widehat{H}_{\mathrm{inter}}^{\tau}\right\|_2.
     \end{equation}
     Hence,
     \begin{equation}
         \|\mathfrak{W}_j\|_2\leq\left(\frac{2+\alpha}{2}\right)^{-1}
     \end{equation}
     This gives a bound of
     \begin{equation}
         \eta_1\lesssim\epsilon^{-4}\prod_{j=k+1}^{N_W}\left(\frac{2+\alpha}{2}\right)^{-1}\leq\epsilon^{-4}\left(\frac{2+\alpha}{2}\right)^{-\left\lceil\frac{N_W}{2}\right\rceil}=\epsilon^{-4}e^{-\left\lceil\frac{N_W}{2}\right\rceil\ln{\frac{2+\alpha}{2}}}.
     \end{equation}
    Now, solving for the tightest $N_W$ yields
     \begin{equation}
         N_W = \left\lfloor\frac{W-W_0}{\mathfrak{u}}\right\rfloor
     \end{equation}
    where $\mathfrak{u}$ is defined by
     \begin{equation}
         \mathfrak{u} = \max_{j\in\{1,2,3\}}\mu_j\|[I-B_jB_k^{-1}, I-B_jB_l^{-1}]\|_2.
     \end{equation}
     For $z\in C_+$, we have
     \begin{equation}
         |z|\leq\sup_{E\in\Sigma}|E|+\eta+2\epsilon.
     \end{equation}
     Since $\mathfrak{h}^{\mathrm{intra}}$ outside of $\Omega_{W_0}^*$ has eigenvalues bounded below by $vW_0$,
     the distance is
     \begin{equation}
          d\left(z,\sigma\left(\mathfrak{h}_{j:N_W}^{\mathrm{intra}}\right)\right)\geq vW_0-\left(\sup_{E\in\Sigma}|E|+\eta+2\epsilon\right).
     \end{equation}
     It follows that
     \begin{equation}
         vW_0-\left(\sup_{E\in\Sigma}|E|+(2+\alpha)\left\|\widehat{H}_{\mathrm{inter}}^{\tau}\right\|_2+2\epsilon\right)\geq \frac{4+\alpha}{2}\left\|\widehat{H}_{\mathrm{inter}}^{\tau}\right\|_2
     \end{equation}
     which implies
     \begin{equation}
          W_0\geq \frac{1}{v}\left(\sup_{E\in\Sigma}|E|+\frac{8+3\alpha}{2}\left\|\widehat{H}^{\tau}_{\mathrm{inter}}\right\|_2+2\epsilon\right).
     \end{equation}
     For case (ii), we have
     \begin{align}
         \eta_2&\lesssim\oint_{C_+}|\delta_\epsilon(E-z)|\left(\|J_k^*RJ_k-J_k^*R_{m_k:\infty}J_k\|_2+\|J_k^*R_{0:N_W}J_k-J_k^*R_{m_k:N_W}J_k\|_2\right)|dz|\\\
         &+\|J_k^*\delta_\epsilon(E-\mathfrak{h}_{m_k:\infty})J_K\|_2+\|J_k^*\delta_\epsilon(E-\mathfrak{h}_{m_k:N_W})J_k\|_2
     \end{align}
     where $m_k = \left\lfloor\frac{k}{2}\right\rfloor$.
     Let $\widetilde{M}$ be the block operator matrix defined by
     \begin{equation}
         \widetilde{M} = 
         \begin{bmatrix}
             z-\mathfrak{h}_{0:m_k-1} & -\mathfrak{h}_{0:m_k-1,m_k:\infty}\\
             -\mathfrak{h}_{m_k:\infty,0:m_k-1} & z-\mathfrak{h}_{m_k:\infty}
         \end{bmatrix}.
     \end{equation}
     Then, the Banachiewicz inversion formula yields
     \begin{equation}
         [\widetilde{M}^{-1}]_{22} = R_{m_k:\infty}-R_{m_k:\infty}\mathfrak{h}_{m_k:\infty,0:m_k-1}J_{0:m_k-1}^*RJ_{0:m_k-1}\mathfrak{h}_{0:m_k-1,m_k:\infty}R_{m_k:\infty}
     \end{equation}
     where $R_{m_k:\infty} = R_z(\mathfrak{h}_{m_k:\infty})$. It follows that
     \begin{equation}
         \|J_k^*RJ_k-J_k^*R_{m_k:\infty}J_k\|_2 = \|J_k^*[\widetilde{M}^{-1}]_{22}J_k-J_k^*R_{m_k:\infty}J_k\|_2\lesssim\epsilon^{-2}\|J_k^*R_{m_k:\infty}J_m\|_2.
     \end{equation}
     Define the following sequence of operators
     \begin{align}
         \mathfrak{V}_j &= \mathfrak{h}_{j+1,j}J_j^*R_{j:\infty}J_j\;\;\text{for all}\;\; j\in\{m_k,\ldots,k-1\},\\
         \mathfrak{V}_k &= J_k^*R_{k:\infty}J_k.
     \end{align}
     Then, we have
     \begin{equation}
         J_k^*R_{m_k:\infty}J_{m_k} = (-1)^{k-m_k}\mathfrak{V}_k\prod_{j=m_k}^{k-1}\mathfrak{V}_j
     \end{equation}
     which implies
     \begin{equation}
         \|J_k^*R_{m_k:\infty}J_{m_k}\|_2\lesssim\epsilon^{-1}\prod_{j=m_k}^{k-1}\|\mathfrak{B}_j\|_2.
     \end{equation}
    Similarly to the first case, we obtain
     \begin{equation}
         \|\mathfrak{V}_j\|\leq\left(\frac{2+\alpha}{2}\right)^{-1}.
     \end{equation}
     This gives a bound of
     \begin{equation}
         \|J_k^*RJ_k-J_k^*R_{m_k:\infty}J_k\|_2\lesssim\epsilon^{-4}\left(\frac{2+\alpha}{2}\right)^{-\left\lceil\frac{k}{2}\right\rceil}.
     \end{equation}
     By a similar argument, we obtain a bound of
     \begin{equation}
         \|J_k^*RJ_k-J_k^*R_{m_k:\infty}J_k\|_2\lesssim\epsilon^{-4}\left(\frac{2+\alpha}{2}\right)^{-\left\lceil\frac{k}{2}\right\rceil}.
     \end{equation}
     By Weyl's inequality, it follows that
     \begin{equation}
         \min|\sigma(\mathfrak{h}_{m_k:\infty})|\geq\min|\sigma(\mathfrak{h}_{m_k:\infty}^{\mathrm{intra}})|-\left\|\widehat{H}^{\tau}_{\mathrm{inter}}\right\|_2.
     \end{equation}
     The minimum unperturbed energy scales linearly with the radius due to the Dirac cone, that is,
     \begin{equation}
         \min|\sigma(\mathfrak{h}_{m_k:\infty}^{\mathrm{intra}})|\approx vW_{m_k}= k
     \end{equation}
     where $v$ is the Fermi velocity. Thus, we obtain
     \begin{equation}
         k+1\lesssim\min|\sigma(\mathfrak{h}_{m_k:\infty})|
     \end{equation}
     which gives the bound
     \begin{equation}
         \|J_k^*\delta_\epsilon(E-\mathfrak{h}_{m_k:\infty})J_k\|_2\lesssim\epsilon^{-1}e^{-\frac{v^2\mathfrak{u}^2}{8}(k+1)^2\epsilon^{-2}}.
     \end{equation}
     By a similar argument, we obtain a bound of
     \begin{equation}
         \|J_k^*\delta_\epsilon(E-\mathfrak{h}_{m_k:N_W})J_k\|_2\lesssim\epsilon^{-1}e^{-\frac{v^2\mathfrak{u}^2}{8}(k+1)^2\epsilon^{-2}}.
     \end{equation}
    For case (ii), we obtain the following bound of $\eta_2$:
     \begin{equation}
         \eta_2\lesssim\epsilon^{-4}e^{-\ln{\frac{2+\alpha}{2}}\left\lceil\frac{1}{2}\left\lfloor\frac{W-W_0}{2\mathfrak{u}}+1\right\rfloor\right\rceil}+\epsilon^{-1}e^{-\frac{v^2\mathfrak{u}^2}{8}\left\lfloor\frac{W-W_0}{2\mathfrak{u}}+2\right\rfloor^2\epsilon^{-2}}.
     \end{equation}
    Together, this gives a final error bound
     \begin{align}
         \eta_W&\lesssim\eta_{W}^-+\max(\eta_1,\eta_2)\\
         &\lesssim\epsilon^{-3}e^{-\eta^2\epsilon^{-2}}+\epsilon^{-4}e^{-\ln{\frac{2+\alpha}{2}}\left\lceil\frac{1}{2}\left\lfloor\frac{W-W_0}{2\mathfrak{u}}+1\right\rfloor\right\rceil}+\epsilon^{-1}e^{-\frac{v^2\mathfrak{u}^2}{8}\left\lfloor\frac{W-W_0}{2\mathfrak{u}}+2\right\rfloor^2\epsilon^{-2}}.
     \end{align}
\end{proof}

Before we can prove \zcref{lem:errorL}, we first require another lemma. We extend our earlier pseudometric $\mathfrak{d}_{jk}$ to a full metric by uniquely extending the direct sums. Define the extension of $G\in\Omega_j^*$, denoted by $G^e$, by
\begin{equation}
     G^e = G_j\oplus G_k\oplus G_l
\end{equation}
where $G_j$ is such that $m_j+m_k+m_l=0$ for the associated integer vectors. Although this appears to be a slight abuse of notation, the layer is clear in context. This leads to the following lemma, which is a modified version of Lemma 2.2 from \cite{chen_qmmm_2016}.
\begin{lemma}
    \label{lem:resolventDecay}
    For double-incommensurate twisted trilayer graphene, the following bound holds
    \begin{equation}
    \left|R_z(\widehat{H}_q^{(\tau,W)})\right|_{(\widehat{G},k\beta)}^{(G,j\alpha)}|\lesssim \epsilon^{-1}e^{-\gamma_\epsilon\|G^e-\widehat{G}^e\|_2}
    \end{equation}
    for all $q\in\mathbb{R}^2$ and $(G,j\alpha),(G,k\beta)\in\Omega^*$ where
    \begin{equation}
        \gamma_\epsilon = \frac{C'\epsilon}{\ln(\epsilon^{-1})}
    \end{equation}
    for some $C'>0$
    and $G^e = G_j\oplus G_k\oplus G_l$ for all $G\in\Omega_j^*$ where $G_j$ is such that $m_j+m_k+m_l=0$ for the associated integer vectors.
\end{lemma}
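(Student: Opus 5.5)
The plan is a Combes--Thomas argument adapted to the lifted index $G^e$, following the structure of Lemma 2.2 in \cite{chen_qmmm_2016}. Fix $q$ and $z$ on the contour with $d(z,\sigma(\widehat{H}_q^{(\tau,W)}))\geq\epsilon$. For a fixed base point $\widehat{G}$ and a weight $\gamma>0$, introduce the diagonal multiplication operator
\begin{equation*}
[S_\gamma\psi]_{(G,j\alpha)} = e^{\gamma\phi(G)}\psi_{(G,j\alpha)},\qquad \phi(G) = \min\bigl(\|G^e-\widehat{G}^e\|_2,\,\Lambda\bigr),
\end{equation*}
where $\Lambda$ is a cutoff keeping $S_\gamma$ bounded and is later sent to infinity. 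The function $\phi$ is $1$-Lipschitz in $G^e$. The target identity is $S_\gamma R_z(H)S_\gamma^{-1} = R_z(S_\gamma H S_\gamma^{-1})$, with $H = \widehat{H}_q^{(\tau,W)}$. The idea is to show that conjugation perturbs $H$ by at most $\epsilon/2$ in operator norm. Then $\|S_\gamma R_z(H)S_\gamma^{-1}\|_2\leq 2\epsilon^{-1}$, and reading off the $(G,\widehat{G})$ entry gives $|[R_z(H)]^{(G,j\alpha)}_{(\widehat{G},k\beta)}|\leq 2\epsilon^{-1}e^{-\gamma\phi(G)}$.

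The first step is to bound $\|S_\gamma HS_\gamma^{-1}-H\|_2$. Its entries are $H^{(G,j\alpha)}_{(G',k\beta)}\bigl(e^{\gamma(\phi(G)-\phi(G'))}-1\bigr)$, and these are controlled by $|H_{GG'}|\,\gamma\|G^e-G'^e\|_2e^{\gamma\|G^e-G'^e\|_2}$.

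The structure of $H$ is then used as follows.
\begin{itemize}
\item Intralayer blocks are diagonal in $G$, so they contribute nothing.
\item Interlayer entries between $\Omega_j^*$ and $\Omega_k^*$ require $G_l=G'_l$. They couple $G_k\mapsto$ (new) $G_j'$, so $\|G^e-G'^e\|_2$ is comparable to $\|Q(q,G,G')\|_2$ up to the bounded shift $q$. They carry the factor $\widehat{h}(Q)$, which decays like $e^{-\gamma_2\|Q\|}$ by \zcref{ass:bound}. Because of the $\tau$-truncation $g_\tau$, they actually vanish for $\|Q\|\geq\tau$.
\end{itemize}
With Schur/Riesz--Thorin exactly as in the proof of \zcref{lem:f-analytic}, this gives $\|S_\gamma HS_\gamma^{-1}-H\|_2\leq C\gamma\,\tau e^{\gamma\tau}$. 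The $W$-projection only removes entries, so it does not spoil the bound.

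The main obstacle is the dependence of the rate on $\epsilon$. Naively one gets $\gamma\sim\epsilon$, whereas the claimed rate is $\gamma_\epsilon\sim\epsilon/\ln(\epsilon^{-1})$. I would obtain it by treating the effective hopping range as $\epsilon$-dependent rather than using the fixed $\tau$, so that the bound stays uniform in $\tau$.
\begin{itemize}
\item Split the interlayer hopping into a part with $\|Q\|\leq r_\epsilon := \gamma_2^{-1}\ln(\epsilon^{-1})$ and a tail. The tail has norm $\lesssim e^{-\gamma_2 r_\epsilon}\lesssim\epsilon^2$, which is harmless relative to $\epsilon$.
\item For the near part the bound is $C\gamma r_\epsilon e^{\gamma r_\epsilon}$. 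Choosing $\gamma = C'\epsilon/\ln(\epsilon^{-1})$ with $C'$ small makes $\gamma r_\epsilon\lesssim\epsilon$, hence $e^{\gamma r_\epsilon}=O(1)$, and the perturbation is $\leq\epsilon/4$.
\item The tail must also be conjugated. Its weighted size is $\sum e^{-(\gamma_2-\gamma)\|Q\|}\gamma\|Q\|$ over $\|Q\|>r_\epsilon$, which is again $O(\epsilon^2)$ since $\gamma\ll\gamma_2$.
\end{itemize}
One also needs the counting fact that the number of lattice points with $\|Q\|\leq r$ grows polynomially, so that the Schur sums stay finite. This holds for each fixed $G_l$ slice of the lattice.

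The final step is to let $\Lambda\to\infty$. The constants are independent of $\Lambda$, so the entrywise bound $|[R_z(\widehat{H}_q^{(\tau,W)})]^{(G,j\alpha)}_{(\widehat{G},k\beta)}|\lesssim\epsilon^{-1}e^{-\gamma_\epsilon\|G^e-\widehat{G}^e\|_2}$ follows, uniformly in $q$.
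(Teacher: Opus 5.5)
Your overall strategy is the paper's: a Combes--Thomas conjugation by a diagonal weight in the lifted index $G^e$, a split of the hopping sum at a radius $\sim\ln(\epsilon^{-1})$ that produces $\gamma_\epsilon\sim\epsilon/\ln(\epsilon^{-1})$, Schur/Riesz--Thorin for the $\ell^2$ bound, and reading off entries. Your cutoff $\Lambda$ is extra care that the paper omits. However, the locality step on which your perturbation bound rests is wrong as you justify it.

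Take an interlayer entry between $(G,j\alpha)\in\Omega_j^*$ and $(G',k\beta)\in\Omega_k^*$ with $G_l=G'_l$. Write $G_k=B_km_k$, $G_l=B_lm_l$, $G'_j=B_jm'_j$ and $n=m_k+m_l+m'_j$. Then
\begin{equation*}
G^e-G'^{e}=(-B_jn)\oplus(B_kn)\oplus 0,\qquad B_jn=Q(q,G,G')-q-p,\qquad p=(B_k-B_j)m_k+(B_l-B_j)m_l,
\end{equation*}
and $\|G^e-G'^{e}\|_2=\sqrt{2}\|B_jn\|_2$, since $B_kB_j^{-1}$ is a rotation. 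The vector $p$ is the moir\'e momentum of the channel $G$, and it is unbounded on $\Omega^*$. So $\|G^e-G'^{e}\|_2$ is \emph{not} comparable to $\|Q\|_2$ up to the shift $q$. In $\widehat{H}_q^{\tau}$, choosing $G'_j$ so that $Q$ lies in a unit cell gives entries of size $O(1)$ whose $G^e$-separation is about $\sqrt{2}\|p\|_2$, which is arbitrarily large. Consequently $S_\gamma\widehat{H}_q^{\tau}S_\gamma^{-1}-\widehat{H}_q^{\tau}$ is not bounded uniformly in $\Lambda$, and the bound $C\gamma\tau e^{\gamma\tau}$ fails.

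Your remark that the $W$-projection ``only removes entries, so it does not spoil the bound'' therefore has things backwards. The $W$-truncation is precisely what makes the operator short-range in the $G^e$ metric. The missing step is this: a nonzero entry of $\widehat{H}_q^{(\tau,W)}$ requires $G\in\mathcal{W}_j^*(q,W_j)$, i.e.\ $\|q-\widetilde{K}+p\|_2<W_j$. Hence $\|B_jn\|_2\leq\|Q\|_2+\|\widetilde{K}\|_2+W_j$, and so $\|G^e-G'^{e}\|_2<\sqrt{2}(\tau+\|\widetilde{K}\|_2+W_j)$. This is the reverse-triangle-inequality step in the paper's proof. With it, your argument goes through for $\widehat{H}_q^{(\tau,W)}$, with constants depending on $W$ and $\|\widetilde{K}\|_2$.

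Two smaller points.
\begin{itemize}
\item The rate you call the main obstacle is not one. For $\epsilon<e^{-1}$ one has $\epsilon/\ln(\epsilon^{-1})\leq\epsilon$, so decay at a rate $c\epsilon$ is stronger than what is claimed. Once locality is in hand, you can sum $\gamma\,d\,e^{\gamma d}e^{-\gamma_2\|Q\|_2}$ with $d\lesssim\|Q\|_2+1$ directly. This gives a $\tau$-uniform $O(\gamma)$ perturbation bound and $\gamma\sim\epsilon$; the split (yours and the paper's) only reproduces the weaker rate.
\item Only $S_\gamma HS_\gamma^{-1}-H$ enters the argument, so the norm of the unconjugated tail is irrelevant. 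In any case $e^{-\gamma_2r_\epsilon}=\epsilon$, not $\epsilon^2$.
\end{itemize}
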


\begin{proof}
     Define
    \begin{equation}
        [B_q]_{(\widehat{G},k\beta)}^{(G,j\alpha)} =\delta_{(\widehat{G},k\beta)}^{(G,j\alpha)}e^{-\widetilde{\gamma}\|G^e-\widetilde{G}^e\|} 
    \end{equation}
    for some fixed $\widetilde{G}\in\Omega^*$ and all $q\in\mathbb{R}^2$. Conjugating $\hat{H}_q$ by $B$ and taking the difference gives:
    \begin{align}
        \left[B_q\widehat{H}_q^{(\tau,W)}B_q^{-1}-\widehat{H}_q^{(\tau,W)}\right]_{(\widehat{G},k\beta)}^{(G,j\alpha)} &= \left[\widehat{H}_q^{(\tau,W)}\right]_{(\widehat{G},k\beta)}^{(G,j\alpha)}(e^{\widetilde{\gamma}(\|G^e-\widetilde{G}^e\|_2-\|\widehat{G}^e-\widetilde{G}^e\|_2)}-1)\\
        &\leq \left[\widehat{H}_q^{(\tau,W)}\right]_{(\widehat{G},k\beta)}^{(G,j\alpha)}\left(e^{\widetilde{\gamma}\left\|G^e-\widehat{G}^e\right\|_2}-1\right)
    \end{align}
    
    Recall that
    \begin{equation}
        \left|[\widehat{H}_q]_{(\widehat{G},k\beta)}^{(G,j\alpha)}\right|\lesssim \delta_{k}^j\delta_{\widehat{G}}^G\frac{1-\gamma_1}{\gamma_1^2}+(1-\delta_k^j)\delta_{\widehat{G}\setminus \widehat{G}_j}^{G\setminus G_k}e^{-\gamma_2\left\|q+\widehat{G}_j+G_k+G_l\right\|_2}
    \end{equation}
    for $(G,j\alpha)\in\Omega_j^*$ and $(\widehat{G},k\beta)\in\Omega_k^*$. Observe that 
    \begin{align}
        \|q+\widehat{G}_j+G_j-G_j+G_k+G_l\|_2=\|q+\widehat{G}_j+G_k+G_l\|_2<\tau
    \end{align}
    where $m_j = -m_k-m_l$ such that
    \begin{equation}
        \|q+(B_k-B_j)m_k+(B_l-B_j)m_l\|_2<W.
    \end{equation}
    So, by the lower bound of the reverse triangle inequality, we obtain
    \begin{equation}
        \|\widehat{G}_j-G_j\|_2<W+\tau.
    \end{equation}
    By a similar argument, one obtains the bounds necessary for $\|G^e-\widehat{G}^e\|_2<\sqrt{2}(W+\tau)$. It immediately follows that,
    \begin{equation}
        \left|[\widehat{H}_q^{(\tau,W)}]_{(\widehat{G},k\beta)}^{(G,j\alpha)}\right|\lesssim e^{-\frac{\gamma_2}{2}\left\|G^e-\widehat{G}^e\right\|_2}.
    \end{equation}
    Then, taking the infinity norm, we have
    \begin{gather}
        \|B\widehat{H}_q^{(\tau,W)}B^{-1}-\widehat{H}_q^{(\tau,W)}\|_{\infty} \leq \sup_{(G,j\alpha)\in\Omega^*}\sum_{(\widehat{G},k\beta)\in\Omega^*}\left|\left[\widehat{H}_q^{(\tau,W)}\right]_{(\widehat{G},k\beta)}^{(G,j\alpha)}\right|\left|e^{\widetilde{\gamma}\left\|G^e-\widehat{G}^e\right\|_2}-1\right|\\
        \lesssim \max_{j\alpha\in\mathcal{A}}\sup_{\substack{|j-k|=1\\G_k\in\mathcal{R}_k^*}}\sum_{k\beta\in\mathcal{A}_k}\sum_{\substack{\widehat{G}_j\in\mathcal{R}_j^*\\ G_l\in\mathcal{R}_l^*}}e^{-\frac{\gamma_2}{2}\|G^e-\widehat{G}^e\|_2}\left|e^{\widetilde{\gamma}\left\|G^e-\widehat{G}^e\right\|_2}-1\right|\\
        \lesssim \max_{j\alpha\in\mathcal{A}}\sup_{|j-k|=1}\sum_{\substack{G_k\in\mathcal{R}_k^*\\ k\beta\in\mathcal{A}_k}}\sum_{\substack{\\G_l\in\mathcal{R}_l^*\\ \widehat{G}_j\in\mathcal{R}_j^*}}\begin{cases}
            (e^{-\widetilde{\gamma}R}-1)e^{-\frac{\gamma_2}{2}\|G^e-\widehat{G}^e\|_2}, & \|G^e-\widehat{G}^e\|_2\leq R,\\
            e^{-\left(\frac{\gamma_2}{2}-\widetilde{\gamma}\right)\|G^e-\widehat{G}^e\|_2}, & \|G^e-\widehat{G}^e\|_2>R
        \end{cases}
        \\
        \lesssim \left(e^{\widetilde{\gamma}R}-1\right)+e^{-\frac{1}{2}(\frac{\gamma_2}{2}-\widetilde{\gamma})R}
    \end{gather}
    where the sum over $G_l\in\mathcal{R}_l^*$ is finite by the $(\tau,W)$-truncation for each fixed $G_k\in\mathcal{R}_k^*$, and the maximum over $\mathcal{A}$ (sum over $\mathcal{A}_k$) is finite since each $\mathcal{A}_k$ has finitely many elements. Additionally, we have implicitly chosen $\tau>R$. We force the norm to be less than $\epsilon$ by splitting $\epsilon$ across the summands. Hence,
    \begin{equation}
        e^{-\frac{1}{2}(\frac{\gamma_2}{2}-\widetilde{\gamma})R}=\frac{\epsilon}{2}
    \end{equation}
    for which
    \begin{equation}
        R=\frac{2}{\frac{\gamma_2}{2}-\widetilde{\gamma}}\ln\left(\frac{2}{\epsilon}\right).
    \end{equation}
    From the remaining summand, we have that
    \begin{equation}
        \widetilde{\gamma}=\frac{1}{R}\ln\left(1+\frac{\epsilon}{2}\right)
    \end{equation}
    where by substituting $R$, we obtain
    \begin{equation}
        \widetilde{\gamma} = \frac{\gamma_2}{2}\left[\frac{\ln\left(1+\epsilon 2^{-1}\right)}{2\ln\left(2\epsilon^{-1}\right)+\ln(1+\epsilon 2^{-1})}\right]
    \end{equation}
    which, for $\epsilon$ small, yields
    \begin{equation}
        \widetilde{\gamma}\approx \frac{C'\epsilon}{\ln(\epsilon^{-1})}.
    \end{equation}
    Hence, for any $\epsilon>0$, we may choose $R<\tau$ sufficiently large and $\widetilde{\gamma} = \gamma_\epsilon := C'\frac{\epsilon}{\ln(\epsilon^{-1})}$ for $C'>0$ chosen as above to obtain $\|B_q\widehat{H}_q^{(\tau,W)}B_q^{-1}\|_\infty<\epsilon$. Exchanging the roles of $(G,j\alpha)$ and $(\widehat{G},k\beta)$, we obtain $\|B_q\widehat{H}_q^{(\tau,W)}B_q^{-1}\|_1<\epsilon$. Thus, by the Riesz-Thorin interpolation theorem, we have $\|B_q\widehat{H}_q^{(\tau,W)}B_q^{-1}\|_2<\epsilon$. By the properties of $\widehat{H}_q^{(\tau,W)}$, there exists a contour around $\sigma(H_q)$ such that $d(z,\sigma(\widehat{H}_q^{(\tau,W)}))\in(\epsilon, 2\epsilon)$ and $\|R_z(\widehat{H}_q^{(\tau,W)})\|_2\leq\epsilon^{-1}$. Hence, we may choose $R$ and $\widetilde{\gamma}$ such that $R_z(B_q\widehat{H}_q^{(\tau,W)}B_q^{-1})$ exists and
    \begin{equation}
        \|B_qR_z(\widehat{H}_q^{(\tau,W)})B_q^{-1}\|_2\leq 2\epsilon^{-1}.
    \end{equation}
    It follows that,
    \begin{gather}
        \left|\left[R_z(\widehat{H}_q^{(\tau,W)})\right]_{(\widehat{G},k\beta)}^{(G,j\alpha)}e^{\widetilde{\gamma}(\|G^e-\widetilde{G}^e\|_2-\|\widehat{G}^e-\widetilde{G}^e\|_2)}\right| \\
        =|[B_qR_z(\widehat{H}_q^{(\tau,W)})B_q^{-1}]_{(\widehat{G},k\beta)}^{(G,j\alpha)}|\leq\|B_qR_z(\widehat{H}_q^{(\tau,W)})B_q^{-1}\|_2\lesssim\epsilon^{-1}.
    \end{gather}
    Therefore, taking $\widetilde{G} = \widehat{G}$, we obtain
    \begin{equation}
        \left|R_z(\widehat{H}_q^{(\tau,W)})\right|_{(\widehat{G},k\beta)}^{(G,j\alpha)}|\lesssim \epsilon^{-1}e^{-\gamma_\epsilon\|G^e-\widehat{G}^e\|_2}.
    \end{equation}
\end{proof}

At long last, we prove \zcref{lem:errorL}, which concludes the proof of \zcref{thm:cumulative}.
\begin{proof}
    Let $C$ be a contour around $\sigma(\widehat{H})$ such that 
    \begin{equation}
        d(z,\sigma(\widehat{H}))\in(\epsilon,2\epsilon)
    \end{equation}
    for all $z\in C$. Then
    \begin{equation}
        \eta_L\lesssim\frac{1}{N_W^2}\sum_{\substack{j\alpha\in\mathcal{A}\\q\in\mathfrak{S}_j(W, N)}}\oint_C|\delta_\epsilon(E-z)|\left|\left[R_z(\widehat{H}_q^{(\tau,W)})-R_z(\widehat{H}_q^{(\tau,W, L)})\right]_{(0,j\alpha)}^{(0,j\alpha)}\right||dz|
    \end{equation}
Hence, it suffices to bound
\begin{equation}
    \zeta_L = \left|\left[R_z(\widehat{H}_q^{(\tau,W)})-R_z(\widehat{H}_q^{(\tau,W, L)})\right]_{(0,j\alpha)}^{(0,j\alpha)}\right|.
\end{equation}
Define
\begin{equation}
   \mathcal{C}(q,W,L) = \mathcal{W}^*(q, W)\setminus\mathcal{L}(L).
\end{equation}
By the second resolvent identity, it follows that
{\small
\begin{equation}
    \zeta_L\leq\sum_{\substack{(G,k\beta)\in\mathcal{C}(q,W,L)\\ (\widehat{G},l\gamma)\in\mathcal{W}^*(q,W)\cap\mathcal{L}(L)}}\left|\left[R_z(\widehat{H}_q^{(\tau,W)})\right]_{(G,k\beta)}^{(0,j\alpha)}\right|\left|\left[\widehat{H}_q^{(\tau,W)}\right]_{(\widehat{G},l\gamma)}^{(G,k\beta)}\right|\left|\left[R_z(\widehat{H}_q^{(\tau,W,L)})\right]_{(0,j\alpha)}^{(\widehat{G},l\gamma)}\right|
\end{equation}
}
Thus, by \zcref{lem:resolventDecay}, we obtain the resolvent bounds
\begin{align}
    \left|\left[R_z(\widehat{H}_q^{(\tau,W)})\right]_{(G,k\beta)}^{(0,j\alpha)}\right|&\lesssim \epsilon^{-1}e^{-\gamma_\epsilon\|G^e\|_2}\\
    \left|\left[R_z(\widehat{H}_q^{(\tau,W)})\right]_{(0,j\alpha)}^{(\widehat{G},l\gamma)}\right|&\lesssim \epsilon^{-1}e^{-\gamma_\epsilon\|\widehat{G}^e\|_2}.
\end{align}
 which gives
\begin{equation}
    \zeta_L\lesssim\epsilon^{-2}\sum_{\substack{(G,k\beta)\in\mathcal{C}(q,W,L)\\ (\widehat{G},l\gamma)\in\mathcal{W}^*(q,W)\cap\mathcal{L}(L)}}e^{-\gamma_\epsilon(\|G^e\|_2+\|\widehat{G}^e\|_2)}\left|\left[\widehat{H}_q^{(\tau,W)}\right]_{(\widehat{G},l\gamma)}^{(G,k\beta)}\right|.
\end{equation}
By the definition of $\mathcal{C}(q,W,L)$ and $\mathcal{W}^*(q,W)\cap\mathcal{L}(L)$, it follows that
\begin{equation}
    \left[\widehat{H}_q^{(\tau,W)}\right]_{(\widehat{G},l\gamma)}^{(G,k\beta)}\neq 0
\end{equation}
if and only if both $k\neq l$ and $G_m = \widehat{G}_m$. Thus, the bound reduces further to
\begin{equation}
    \zeta_L\lesssim\epsilon^{-2}\sum_{\substack{(\widehat{G}_l,G_m,k\beta)\in\mathcal{C}(q,W,L)\\ (G_k,G_m,l\gamma)\in\mathcal{W}(q,W)\cap\mathcal{L}(L)\\k\neq l}}e^{-\gamma_\epsilon(\|G^e\|_2+\|\widehat{G}^e\|_2)}\left|\left[\widehat{H}_q^{(\tau,W)}\right]_{(\widehat{G}_l,G_m,l\gamma)}^{(G_k,G_m,k\beta)}\right|.
\end{equation}
Since $k\neq l$, entries from $\widehat{H}_q^{(\tau,W)}$ come from interlayer terms that decay as
\begin{equation}
    \left|\left[\widehat{H}_q^{(\tau,W)}\right]_{(\widehat{G}, l\gamma)}^{(G,k\beta)}\right|\lesssim e^{-\gamma_2\|G^e-\widehat{G}^e\|_2}.
\end{equation}
This yields the bound
\begin{align}
    \zeta_L&\lesssim\epsilon^{-2}\sum_{\substack{(G_l,G_m,k\beta)\in\mathcal{C}(q,W,L)\\ (G_k,G_m,l\gamma)\in\mathcal{W}(q,W)\cap\mathcal{L}(L)\\k\neq l}}e^{-\gamma_\epsilon(\|G^e\|_2+\|\widehat{G}^e\|_2)-\gamma_2\|G^e-\widehat{G}^e\|_2}\\
    &\lesssim\epsilon^{-2}\sum_{(G_l,G_m,k\beta)\in\mathcal{C}(q,W,L)}e^{-\gamma_\epsilon\|\widehat{G}^e\|_2}\sum_{(G_k,G_m,l\gamma)\in\mathcal{W}(q,W)\cap\mathcal{L}(L)}e^{-\gamma_2\|G^e-\widehat{G}^e\|_2}\\
    &\lesssim\epsilon^{-2}\sum_{(G_l,G_m,k\beta)\in\mathcal{C}(q,W,L)}e^{-\gamma_\epsilon\|\widehat{G}^e\|_2}\\
    &\lesssim\epsilon^{-2}\int_{L}^\infty re^{-\gamma_\epsilon r}dr\\
    &\lesssim\epsilon^{-2}e^{-\gamma_\epsilon L}\frac{1+\gamma_\epsilon L}{\gamma_\epsilon^2}.
\end{align}
Observe that $G_m$ comes from a finite set, so that the reduction to the $2$D integral produces a constant dependent on $W$, $L$, and $\max_{j=1}^3|\theta_j|$ for all nonzero $\theta_j$ in radians. This constant is essentially $O(W^2L^2)$.
We pick up an $\epsilon^{-1}$ from the Gaussian, bringing the final bound to
\begin{equation}
    \eta_L\lesssim\epsilon^{-3}\gamma_\epsilon^{-1}Le^{-\gamma_\epsilon L}.
\end{equation}
\end{proof}
This completes the proof of \zcref{thm:cumulative}.

\appendix
\section{Hopping function details}
\label{app:hop}
 Let $\mathcal{V}_\beta:\mathbb{N}_0^2\to\mathbb{R}$ be defined by
\begin{gather}
    \mathcal{V}_{\beta}m = \delta_\beta^A(m_1^2+m_1m_2+m_2^2)+\delta_\beta^B(3(m_1^2+m_1m_2+m_2^2)+3(m_1+m_2)+1)
\end{gather}
for all $m\in\mathbb{N}_0^2$ and $v_n^\beta$, let $n$ be the $n$th unique value in $\mathcal{V}_{\beta}\mathbb{N}_0^2$ such that $v_n^\beta < v_{n+1}^\beta$. Define the distance
\begin{gather}
    d(n,\beta) = \left(a\delta_\beta^A+\frac{a}{\sqrt{3}}\delta_\beta^B\right)\sqrt{v_n^\beta}.
\end{gather}
We define the $n$th shell of the $\beta$ sublattice of monolayer graphene $S_{n}^{\beta}$ by
\begin{gather}
    S_n^\beta = \{R\in\mathcal{R}+\tau_\beta: \|R\|_2=d(n,\beta)\}
\end{gather}
where $S_0^A$ is considered the center site. Then the intralayer hopping entries are given by
\begin{gather}
    [\widehat{H}_q]_{(G_k,G_l,j\beta)}^{(G_k,G_l,j\alpha)} = \sum_{\{n:d(n,\beta)<\tau\}}t_n^\beta\sum_{R\in S_n^\beta}e^{i(q+G_k+G_l)\cdot \mathfrak{R}_{\theta_j}(R)}
\end{gather}
where the hopping strengths $t_{n}^\beta$ are given in Table \zcref{tab:intraParam}.

\begin{table}[H]
    \centering
    \begin{tabular}{|c|c|c|}
        \hline
        $\boldsymbol{n}$ &  $\boldsymbol{\beta=A}$ & $\boldsymbol{\beta=B}$\\ \hline
        0 & $0.3208$ & ---\\ \hline
        1 & $0.22378$ & $-2.92181$\\ \hline
        2 & $0.04813$ & $-0.27897$\\ \hline
        3 & $-0.02402$ & $0.02669$\\ \hline
        4 & $0.00263$ & $-0.00885$\\ \hline
        5 & $0.00111$ & $-0.01772$\\ \hline
        6 & $0.00018$ & $0.00675$\\ \hline
        7 & $-0.00008$ & $-0.00262$\\ \hline
        8 & --- & $0.00019$\\ \hline
        9 & --- & $-0.00068$ \\ \hline
        10 & --- & $-0.00237$\\ \hline
    \end{tabular}
    \caption{Intralayer hopping strengths $t_n^\beta$ across neighbor shells $n = 0$ to $n = 10$ for Graphene.}
    \label{tab:intraParam}
    \vspace{6pt}
    \raggedright
    \small\textit{Source:} Data adapted from \cite{jung_tight-binding_2013}.
\end{table}

For interlayer hopping functions, we adapt the ab initio interlayer hopping functions from \cite{fang_electronic_2016}. Define
\begin{equation}
    \vec{r} = (R_k+\tau_{k\beta})-(R_j+\tau_{j\alpha})
\end{equation}
for $(R_j,j\alpha)\in\Omega_j$ and $(R_k,k\beta)\in\Omega_k$ with $j\neq k$. In addition, let
\begin{equation}
    r = \frac{\|\vec{r}\|_2}{a},\;\;\hat{r} = \frac{\vec{r}}{\|\vec{r}\|_2}\;\;\text{and}\;\;\hat{\tau}_{j\alpha} = \frac{\tau_{j\alpha}}{\|\tau_{j\alpha}\|_2}.
\end{equation}
Then
\begin{align}
    h_{k\beta}^{j\alpha}(\vec{r}) &= \lambda_0e^{-\xi_0r^2}\cos(\kappa_0r)+\lambda_3r^2e^{-\xi_3\left(r-x_3\right)^2}[\theta_3]_{j\alpha}^{k\beta}(\hat{r})\\
    &+\lambda_6e^{-\xi_6\left(r-x_6\right)^2}\sin(\kappa_6r)[\theta_6]_{j\alpha}^{k\beta}(\hat{r})
\end{align}
where
\begin{equation}
    [\theta_n]_{j\alpha}^{k\beta}(\hat{r}) = \cos(n\cos^{-1}(\hat{r}\cdot(-1)^{\alpha+2}\hat{\tau}_{j\alpha}))+\cos(n\cos^{-1}(\hat{r}\cdot(-1)^{\beta+1}\hat{\tau}_{k\beta}))
\end{equation}
and the parameters $\lambda_j$, $\xi_j$, $\kappa_j$, and $x_j$ for $j\in\{0,3,6\}$ are given in Table \zcref{tab:interParam}.

\begin{table}[H]
    \centering
    \begin{tabular}{|c|c|c|c|c|}
        \hline
       $\boldsymbol{j}$ & $\boldsymbol{\lambda_j}$ & $\boldsymbol{\xi_j}$ & $x_j$ & $\boldsymbol{\kappa_j}$ \\ \hline
        $0$ & $0.3155$ & $1.7543$ & --- & $2.0010$\\ \hline
        $3$ & $-0.0688$ & $3.4692$ & $0.5212$ & ---\\ \hline
        $6$ & $-0.0083$ & $2.8764$ & $1.5206$ & $1.5731$\\ \hline
    \end{tabular}
    \caption{Interlayer tight-binding parameters for graphene.}
    \label{tab:interParam}
    \vspace{6pt}
    \raggedright
    \small\textit{Source:} Data adapted from \cite{fang_electronic_2016}.
\end{table}

We require $\hat{h}_{k\beta}^{j\alpha}$ in order to construct the truncated reciprocal Hamiltonian $\widehat{H}_q^{(\tau,W,L)}$. Ideally, one would obtain the analytic Fourier transform. However, there does not seem to be a closed form expression for $\hat{h}_{k\beta}^{j\alpha}$. Consequently, we must resort to numerical integration and interpolation. One notices that $h_{k\beta}^{j\alpha}$ consists of the products of radial functions and trigonometric functions. Using the Jacobi-Anger expansion
\begin{equation}
    e^{-i2\pi\rho r\cos(\theta-\phi)} = \sum_{n = -\infty}^{\infty}(-i)^nJ_n(2\pi\rho r)e^{in(\theta-\phi)}
\end{equation}
where $J_n$ is the $n$th Bessel function of the first kind, one obtains 
\begin{equation}
    F(\rho,\phi) = \sum_{m=-\infty}^{\infty}(-i)^me^{im\phi}\mathcal{H}_{m}\{V_m(r)\}(\rho)
\end{equation}
for the $2$D Fourier transform
\begin{equation}
    F(\rho,\phi) = \int_{0}^\infty\int_{0}^{2\pi}f(r,\theta)e^{-i2\pi\rho r\cos(\theta-\phi)}rd\theta dr
\end{equation}
where
\begin{equation}
    \mathcal{H}_{m}\{V_m(r)\}(\rho) = 2\pi\int_{0}^\infty V_m(r)J_m(2\pi\rho r)rdr
\end{equation}
is the $m$th order Hankel transform. Applying this to $h_{k\beta}^{j\alpha}$, one obtains
\begin{equation}
    \hat{h}_{k\beta}^{j\alpha}(\rho) =  \mathcal{H}_{0}\{V_0(r)\}(\rho)+i[\theta_3]_{k\beta}^{j\alpha}(\phi)\mathcal{H}_3\{V_3(r)\}(\rho)-[\theta_6]_{k\beta}^{j\alpha}(\phi)\mathcal{H}_6\{V_6(r)\}(\rho)
\end{equation}
where $V_j(r)$ refers to the $j = \{0, 3, 6\}$ radial parts of $h_{k\beta}^{j\alpha}$. This leads to the following algorithm for computing $\hat{h}_{k\beta}^{j\alpha}$.

\noindent\makebox[\textwidth][c]{
\begin{minipage}{0.75\textwidth}
    \begin{algorithm}[H]
    \caption{Computing $\hat{h}_{k\beta}^{j\alpha}$}
    \label{alg:matlab_hopping}
        \begin{algorithmic}[1]
            \State Define a high resolution $1$D momentum grid $\rho$
            \State For each $n$ compute $\mathcal{H}_{n}\{V_n(r)\}(\rho)$
            \State Construct $1$D interpolants
            \State Define a high resolution $2$D momentum grid $q$
            \State Evaluate the $1$D interpolants across $\|q\|_2$
            \State Ensure $C_3$ symmetry by extending $q$ across compatible rotations
            \State Compute the angular phase factors $[\theta_n]_{k\beta}^{j\alpha}$ based on $q$
            \State Accumulate corresponding phase shifted radial components
            \State Convert the accumulation into a $2$D interpolant
        \end{algorithmic}
    \end{algorithm}
\end{minipage}
}
\vspace{1em}

In computing the angular phase factors, we take advantage of the relation
\begin{equation}
    T_n(x) = \cos(n\cos^{-1}(x))
\end{equation}
for $\|x\|\leq 1$.
To highlight changes in the accuracy of the local density of states, we compare against the established continuum model \cite{Zoe2020}. 

\section{Continuum model description}
\label{app:bm}

The Taylor expansion of $[\widehat{H}_q]_{jj}$ around $K_j$ centered at $K_2$ yields the rotated Dirac equation
\begin{equation}
    [(\widehat{H}_q^{\mathrm{BM}})_{jj}]_{G'}^{G} = \delta_{G'}^Gv_F\widetilde{q}\cdot\left[\sigma_x^{\theta_j},-\sigma_y^{\theta_j}\right]
\end{equation}
where 
\begin{align}
    \sigma_x^{\theta_j} &= \sigma_x\cos\theta_j-\sigma_y\sin\theta_j,\;\;\sigma_y^{\theta_j} = \sigma_x\sin\theta_j+\sigma_y\cos\theta_j,\\
    \widetilde{q} &= q+(I-B_jB_k^{-1})G_k+(I-B_jB_l^{-1})G_l+K_j-K_2
\end{align} 
For $[\widehat{H}_q]_{jk}$, they make the approximation
\begin{equation}
    [(\widehat{H}_q^{\mathrm{BM}})_{jk}]_{G'}^{G} = \sum_{n=1}^3T_{jk}^{(n)}\delta_{q'-q'',q_{jk}^{(n)}}
\end{equation}
where
\begin{align}
    T_{jk}^{(n)} &= \begin{bmatrix}
        \omega_0 & \omega_1e^{-\frac{2\pi i(n-1)}{3}}\\
        \omega_1e^{-\frac{2\pi i(1-n)}{3}} & \omega_0
    \end{bmatrix},\\
    q' &= (I-B_jB_k^{-1})G_k+(I-B_jB_l^{-1})G_l,\\
    q''&= (I-B_kB_j^{-1})G_j'+(I-B_kB_l^{-1})G_l',\\
    q_{jk}^{(n)} &= \mathfrak{R}_{(1-n)\frac{2\pi}{3}}(K_j-K_k)
\end{align}
for $G\in\Omega_j^*$ and $G'\in\Omega_k^*$, and suitable interlayer hopping strengths $\omega_0\approx 0.07\unit{eV}$ and $\omega_1\approx 0.11\unit{eV}$. The difference in $\omega_0$ and $\omega_1$ results in a small degree of out of plane relaxation. This construction is a simplification of the model proposed by \cite{amorim_electronic_2018} and is similar to the Bistritzer-MacDonald model \cite{Bistritzer2011} for twisted bilayer graphene. However, it suppresses the particle-hole asymmetry that is present in the tight-binding model. 

\bibliographystyle{elsarticle-harv}
\bibliography{references}

\end{document}